\newtheorem{theorem}{Theorem}[section]
\newtheorem{proposition}[theorem]{Proposition}
\newtheorem{lemma}[theorem]{Lemma}
\newtheorem{definition}[theorem]{Definition}
\newtheorem{remark}[theorem]{Remark}
\newtheorem{corollary}[theorem]{Corollary}
\numberwithin{equation}{section}
\def\be#1 {\begin{equation} \label{#1}}
	\newcommand{\ee}{\end{equation}}
\def\sqw{\hbox{\rlap{\leavevmode\raise.3ex\hbox{$\sqcap$}}$%
		\sqcup$}}
\def\findem{\ifmmode\sqw\else{\ifhmode\unskip\fi\nobreak\hfil
		\penalty50\hskip1em\null\nobreak\hfil\sqw
		\parfillskip=0pt\finalhyphendemerits=0\endgraf}\fi}
\newcommand{\R}{{\mathbb {R}}}
\newcommand{\He}{{\mathbb H}}
\newcommand{\Hep}{{\mathbb H_\varepsilon^n}}
\newcommand{\N}{{\mathbb N}}
\newcommand{\C}{{\mathbb C}}
\newcommand{\vep}{\varepsilon}
\title[Fourier multipliers on  Sobolev spaces]
{Homogeneous Fourier and Weyl multipliers on \\ 
Sobolev spaces related to the Heisenberg group}
\author[R. Basak]{Riju Basak}
\author[R. Garg]{Rahul Garg}
\address{Department of Mathematics, Indian Institute of Science Education and Research Bhopal, 462 066, India.}
\email{\{riju.basak, rahulgarg\}@iiserb.ac.in}
\author[S. Thangavelu]{Sundaram Thangavelu}
\address{Department of Mathematics, Indian Institute of Science, Bangalore, 560 012, India.}
\email{veluma@iisc.ac.in}
\subjclass[2010]{Primary 43A22; Secondary 42B15, 42B35, 46E36, 43A80}
\keywords{Heisenberg group, Weyl Transform, Fourier multipliers, Weyl multipliers, Sobolev spaces, Laguerre Sobolev spaces}
\begin{document}

\begin{abstract}
Inspired by the work of A. Bonami and S. Poornima that a non-constant function which is homogeneous of degree $0$ cannot be a Fourier multiplier on  homogeneous Sobolev spaces, we establish analogous results for Fourier multipliers on the Heisenberg group $ \He^n $ and Weyl multipliers on $ \C^n $ acting on  Sobolev Spaces. 
\end{abstract}

\maketitle

\tableofcontents
\section{Introduction and the main results} \label{sec-intro}
Our main goal in this article is to establish an analogue of an interesting theorem of Bonami and Poornima \cite{BON-POOR} on Fourier multipliers on Sobolev spaces. Let $ X $ be a translation invariant Banach space of functions on $ \R^n$ such as $ L^p(\R^n) $ and Sobolev spaces $ W^{N,p}(\R^n).$ By a Fourier multiplier operator on $ X $ we mean  a translation invariant bounded linear operator $ T $  defined on  $ X .$ It is well known that to any such operator is associated a bounded measurable function $ m $ on $ \R^n$ such that $ \widehat{Tf}(\xi) = m(\xi) \widehat{f}(\xi) $ on $ X \cap L^2(\R^n)$ where $ \widehat{f} $ stands for the Fourier transform of $ f $ on $ \R^n.$ Consequently, such operators are usually denoted by $ T_m $ and we call the bounded function $ m $ the multiplier corresponding to $ T_m.$ The space of all such functions $m$ is said to be the Fourier multiplier space of $X$, with multiplier norm of $m$ identified with the operator norm of $T_m.$  By Plancherel theorem for the Fourier transform it is clear that $ T_m $ is bounded on $ L^2(\R^n) $ if and only if $ m \in L^\infty(\R^n).$ However, such an operator $ T_m $ need not be bounded on  $ L^p(\R^n)$ for $ p \neq 2 $ unless further assumptions are made on the multiplier $ m.$ Finding necessary and sufficient conditions on $ m $ so that $ T_m $ extends to $ L^p(\R^n) $ has a long history and the literature is quite large.\\

Given a Fourier multiplier $ T_m $ on $ L^p(\R^n), 1 \leq p \leq \infty $ it is natural to ask if it is also a bounded linear operator on the Sobolev space $ W^{N,p}(\R^n) $ consisting of all $ f \in L^p(\R^n) $ whose distributional derivatives $ \partial^\alpha f $ also belong to $ L^p(\R^n) $ for all $ |\alpha|\leq N.$ In \cite{Poornima}, it was shown that for $1 < p<\infty$, the Fourier multiplier space of any Sobolev space $W^{N,p}(\mathbb{R}^{n})$ is the same as that of $L^p(\mathbb{R}^{n})$ with norm equivalence, and that the analogous result for $p=1$ is true only for functions of one real-variable. They have  established  that the Fourier multiplier space of $W^{N,1}(\mathbb{R}^{n})$ is strictly larger than that of $L^1(\mathbb{R}^{n})$ when $n \geq 2$. Subsequently, Bonami and Poornima \cite{BON-POOR} showed the absence of certain natural operators of the singular integral type from the Fourier multiplier space of the Sobolev spaces $W^{N,1}(\mathbb{R}^{n})$.\\

In this connection they established the following result. Let $\mathring{W}^{N,p}(\mathbb{R}^{n})$ stand for the homogeneous Sobolev space consisting of all tempered distributions $f$ for which $ \partial^\alpha f \in L^p(\R^n) $ whenever $ |\alpha| = N.$  Recall that  a function $ m $ is said to be homogeneous of degree $ 0 $ if $ m( \lambda \xi) = m(\xi) $ for all $ \lambda >0.$  Such a function is completely determined by its restriction on the unit sphere.

\begin{theorem}[Bonami-Poornima] \label{thm-BP} 
Let $ m $ be a homogeneous function of degree $0.$  Then $ T_m $ is bounded on the homogeneous Sobolev space $\mathring{W}^{N,1}(\mathbb{R}^{n})$ if and only if $ m $ is  constant.
\end{theorem}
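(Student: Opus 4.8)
The forward direction is trivial: if $m$ is a constant $c$, then $T_m=cI$, which is bounded on every normed space of functions or distributions. For the converse I would argue the contrapositive --- if $m$ is homogeneous of degree $0$ and not a.e.\ equal to a constant, then $T_m$ is unbounded on $\mathring{W}^{N,1}(\mathbb{R}^n)$ --- starting from the fact that $T_m$ commutes with every partial derivative $\partial^\alpha$. Hence, on the dense class of Schwartz functions (which suffices, a density argument handling the general case at the end), $\partial^\alpha(T_m f)=T_m(\partial^\alpha f)$, so that $\|T_m f\|_{\mathring{W}^{N,1}}=\sum_{|\alpha|=N}\|T_m(\partial^\alpha f)\|_{L^1(\mathbb{R}^n)}$ while $\|f\|_{\mathring{W}^{N,1}}=\sum_{|\alpha|=N}\|\partial^\alpha f\|_{L^1(\mathbb{R}^n)}$. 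In one real variable this already settles the matter: the map $f\mapsto f^{(N)}$ is an isometric isomorphism of $\mathring{W}^{N,1}(\mathbb{R})$ onto $L^1(\mathbb{R})$ (every $L^1$ function on $\mathbb{R}$ is the $N$-th distributional derivative of a polynomially bounded function) intertwining $T_m$ with itself, so $T_m$ is bounded on $\mathring{W}^{N,1}(\mathbb{R})$ if and only if it is bounded on $L^1(\mathbb{R})$, i.e.\ if and only if $m$ is the Fourier transform of a finite measure; a homogeneous function of degree $0$ of this form has a dilation-invariant representing measure, which must be a multiple of $\delta_0$, so $m$ is constant.

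In dimension $n\ge 2$ the map $f\mapsto(\partial^\alpha f)_{|\alpha|=N}$ is no longer onto the corresponding power of $L^1(\mathbb{R}^n)$ --- its image is a proper, ``curl-free type'' closed subspace --- and the problem is genuinely harder. Here the plan is to exploit that a \emph{non-constant} homogeneous function of degree $0$ produces a \emph{nontrivial} singular integral operator. First I would reduce to the case that $m$ is smooth away from the origin: averaging $T_m$ over rotations against a smooth probability density $\nu$ on $SO(n)$ gives $T_{m_\nu}$ with $m_\nu(\xi)=\int_{SO(n)}m(R\xi)\,d\nu(R)$, still homogeneous of degree $0$, bounded with the same norm bound (rotations are isometries of $\mathring{W}^{N,1}$), now smooth, and still non-constant for a suitable $\nu$ (otherwise, letting $\nu\to\delta_{\mathrm{id}}$, $m$ itself would be constant). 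For such $m$ the Calder\'on--Zygmund theory gives $T_m=c_m I+\big(\text{convolution with }\mathrm{p.v.}\,\Omega(x/|x|)|x|^{-n}\big)$, with $\Omega\in C^\infty(S^{n-1})$, $\int_{S^{n-1}}\Omega\,d\sigma=0$, and $\Omega\not\equiv 0$ precisely because $m$ is non-constant. I would then build a sequence $f_k\in\mathring{W}^{N,1}(\mathbb{R}^n)$ with $\|f_k\|_{\mathring{W}^{N,1}}$ bounded whose top-order derivatives are, at scale $1/k$, mollifications of a fixed nonzero compactly supported distribution having all moments of order $<N$ vanishing (so that the $f_k$ genuinely lie in $\mathring{W}^{N,1}$, the vanishing moments being exactly what makes the singular integral of $f_k$ decay at infinity). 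Applying $T_m$: near each point of the singular support the relevant derivative of $T_m f_k$ behaves like the mollified Calder\'on--Zygmund kernel, whose $L^1$-mass over the dyadic annuli joining scale $1/k$ to scale $1$ grows like $\log k$ because $\Omega\not\equiv 0$; contributions from distinct singular points occur in disjoint regions and do not cancel, and the global part stays bounded by the vanishing moments. Hence $\|T_m f_k\|_{\mathring{W}^{N,1}}\gtrsim \log k\to\infty$, contradicting boundedness.

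The delicate point --- the one I expect to cost most of the work --- is the construction of the $f_k$: one must arrange that \emph{all} of the top-order derivatives $\partial^\alpha f_k$, not merely one of them, stay bounded in $L^1$, whereas the obvious recipe for producing an element of $\mathring{W}^{N,1}$ from a prescribed $\partial^\alpha f$ (dividing $\widehat{\partial^\alpha f}$ by a homogeneous degree-$N$ factor) reintroduces unbounded multipliers, hence singular integrals, on the other derivatives; and one must certify simultaneously that the logarithmic blow-up is not itself cancelled. Some care is also needed in the reduction to smooth $m$ (checking that the rotation-average stays non-constant), and, if one prefers instead to reduce the dimension by freezing $m$ on a $2$-plane on which it remains non-constant, in controlling the resulting error. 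Once a contradiction has been reached, a routine density argument transfers the conclusion from Schwartz functions to all of $\mathring{W}^{N,1}(\mathbb{R}^n)$. Homogeneity of degree $0$ enters twice: through the dilation-invariance of the representing measure in the one-dimensional case, and through the nonvanishing of the Calder\'on--Zygmund part $\Omega$ for non-constant $m$ in higher dimensions.
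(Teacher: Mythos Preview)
This theorem is not proved in the present paper; it is quoted from Bonami--Poornima \cite{BON-POOR}. However, the introduction does sketch their method (see the discussion around equations \eqref{eq-1} and \eqref{rep-euc}), so a comparison is still possible.

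Your plan diverges from the Bonami--Poornima argument in its main reduction. They do not smooth $m$ and then build an explicit blow-up sequence; instead they integrate $R_{\sigma^{-1}}\circ T_m\circ R_\sigma$ against the character $\chi_k$ of the $SO(n)$-representation on $\mathcal{H}_k$, obtaining $T_{m_k}$ with $m_k(\xi)=\sum_j c_{k,j}P_j^k(\xi)|\xi|^{-k}$ --- a finite linear combination of higher-order Riesz transforms. If $m$ is non-constant, some $m_k$ with $k\ge 1$ is nonzero, and the problem reduces to showing that a single higher-order Riesz transform $P(\partial)(-\Delta)^{-k/2}$ (with $P$ a nonzero solid harmonic of degree $k\ge 1$) is unbounded on $\mathring{W}^{N,1}(\mathbb{R}^n)$. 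This is the step where the real work lies, and it is also the template the present paper follows for $\mathbb{H}^n$ and $\mathbb{C}^n$. Your rotation-averaging idea is in the same spirit (both exploit that rotations are isometries of $\mathring{W}^{N,1}$), but projecting onto a single spherical-harmonic type gives a much cleaner target than merely smoothing $m$.

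As for the gap: your proposed construction of the $f_k$ is not yet a proof, and you have identified exactly the obstacle. Prescribing one top-order derivative to look like a mollified CZ kernel while keeping \emph{all} $\partial^\alpha f_k$ with $|\alpha|=N$ in $L^1$ is the compatibility problem that makes $\mathring{W}^{N,1}$ genuinely different from $(L^1)^{\binom{n+N-1}{N}}$, and there is no soft way around it --- the image of $f\mapsto(\partial^\alpha f)_{|\alpha|=N}$ is characterised by Ornstein-type conditions, and an arbitrary $L^1$ tuple will not sit in it. The Bonami--Poornima route sidesteps this by reducing to a very concrete operator (a Riesz transform) for which one can write down test functions explicitly; if you wish to carry your approach through, you will effectively have to rediscover that reduction or produce an equally explicit family. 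Your one-dimensional argument is fine.
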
 

In their work Bonami and Poornima have  proved that the Fourier multiplier space of $W^{N,1}(\mathbb{R}^{n})$ is contained in that of $\mathring{W}^{N,1}(\mathbb{R}^{n})$ and hence in the above theorem we can also replace $\mathring{W}^{N,1}(\mathbb{R}^{n})$ by $ W^{N,1}(\R^n).$ Recently, Kazaniecki and Wojciechowski \cite{Kaz-Woj} gave a necessary condition on multipliers of $\mathring{W}^{1,1}(\mathbb{R}^{n})$. They proved that the Fourier multipliers on $\mathring{W}^{1,1}(\mathbb{R}^{n})$ are necessarily continuous on $\mathbb{R}^{n}$. \\

In this article we are interested in proving analogues of  the above theorem of Bonami and Poornima for the  Fourier transform on the Heisenberg group $ \He^n $ and also for the closely related Weyl transform on $ \C^n.$ In order to state our results, we need to set up some notation. Unlike the Fourier transform on $ \R^n $, the Fourier transform on $ \He^n $ is operator valued. More precisely, if $ f $ is an integrable function on $ \He^n$ then its Fourier transform is the operator valued function $ \lambda \rightarrow \widehat{f}(\lambda)  \in B(L^2(\R^n)) $ from $ \R^\ast $ into the Banach space of bounded linear operators on $ L^2(\R^n).$ Given a bounded function $ m(\lambda) $ on $ \R^\ast $ taking values in $ B(L^2(\R^n)) $ we define an operator $ T_m $ on $ L^2(\He^n) $ by $ \widehat{T_mf}(\lambda) = m(\lambda)\widehat{f}(\lambda).$ These operators which are clearly bounded on $ L^2(\He^n)$ are called (left) Fourier multipliers for obvious reasons. They commute with right translations on $ \He^n $ and it can be shown that they give all  right-invariant bounded operators  on $ L^2(\He^n).$ When they extend to $ L^p(\He^n) $ as bounded operators, we will call them (and also the functions $ m$) $ L^p$ multipliers. There is a vast literature giving sufficient conditions on the multiplier $ m $ so that $ T_m $ is bounded on $ L^p(\He^n),$ see e.g. \cite{MS}. \\

As in the case of $ \R^n $, Fourier multipliers on $ \He^n$ can also be realised as convolution operators. Indeed, if $ T_m $ is right-invariant and bounded on $ L^p(\He^n), 1 \leq p < \infty $ then there exists a pseudo-measure $ S $ on $ \He^n $ such that $ T_m f = S \ast f, $  see e.g. \cite{RV}. As in the case of $ \R^n $, the multiplier $ m $ is related to $ S $ via Fourier transform. In the Euclidean case this relation reads as $ m(\xi) = \widehat{S}(\xi) $ and we observe that the function $ m $ is homogeneous of degree zero if and only if the pseudo-measure $ S $ is homogeneous of degree $ -n.$ It is therefore natural to make the following definition: we say that the Fourier multiplier $ m $ on $ \He^n $ is homogeneous of degree zero if the associated pseudo-measure is homogeneous of degree $ -Q $ where $ Q = 2n+2 $ is the homogeneous dimension of $ \He^n.$ Here, the homogeneity of $ S $ is defined in terms of the non-isotropic dilations $ \delta_r.$ As $ \He^n = \C^n \times \R $ as a manifold, we define $ \delta_r(z,t) = (rz,r^2 t), r>0.$ Thus $ m $ is homogeneous of degree zero if $ \delta_r S = r^{-Q} S.$ In case $ S $ is given by a function $ K $ the above simply means $ K(rz,r^2t) = r^{-Q} K(z,t) $. \\

We can also state the above definition in terms of the multiplier $ m.$ In the Euclidean case, $ m $ is homogeneous of degree zero if and only if $ T_m $ commutes with the standard dilations $ d_r, r>0.$ In a similar way, we can say that the Fourier multiplier $ m $ on $\He^n$ is homogeneous of degree zero if $ T_m $ commutes with the non-isotropic dilations $ \delta_r.$ It can be checked that these two definitions coincide. The connection between the (group) Fourier transforms of $ f $ and $ \delta_r f(z,t) = f(\delta_r(z,t)) $ for a function $ f $ on $ \He^n $ is given by
$$ \widehat{(\delta_rf)}(\lambda) = r^{-Q} d_r  \circ \widehat{f} \left( \lambda / r^2 \right) \circ d_r^{-1} $$
for every $ \lambda \in \R^* $ and $ r >0. $  
From this we see that $ T_m $ commutes with $ \delta_r $ if and only if we have the relation
$ d_r \circ m \left( \lambda / r^2 \right) \circ d_r^{-1} = m(\lambda).$
Thus we see that $ m(\lambda) $ is completely determined by $ m(1)$ and $ m(-1).$ Indeed, 
\begin{equation} \label{hom} 
m(\lambda)  = d_{\sqrt{|\lambda|}} \circ m \left( \lambda / |\lambda| \right) \circ d_{\sqrt{|\lambda|}}^{-1} . 
\end{equation}
As a consequence, the Fourier multiplier $ m $ is homogeneous of degree zero if and only if it satisfies relation \eqref{hom}.\\

The special orthogonal group $ SO(n) $ acts on Fourier multipliers $ m $ on $ \R^n $ by $ \rho_\sigma m(\xi) = m(\sigma^{-1} \xi) $ and we have the relation
$$  \rho_{\sigma^{-1}}\circ T_m \circ  \rho_\sigma  f = T_{\rho_\sigma m}f .$$ If $ \pi_k $ is the irreducible unitary representation of $ SO(n) $ acting on the space $ \mathcal{H}_k $ of spherical harmonics of degree zero, then
integrating the above relation against the character $ \chi_k $ of $ \pi_k $ we have
\begin{equation}\label{eq-1}  \int_{SO(n)}  \chi_k(\sigma^{-1})\rho_{\sigma^{-1}}\circ T_m \circ \rho_\sigma f \, d\sigma =  T_{m_k}f \end{equation}
where $ m_k $ is the projection of $ m $ into $ \mathcal{H}_k $ given by 
$$ m_k(\xi) = \int_{SO(n)} \chi_k(\sigma^{-1})  m(\sigma^{-1}\xi) \, d\sigma .$$
This function $ m_k $  has the  explicit representation  given by 
\begin{equation}\label{rep-euc}  m_k(\xi) = \sum_{j=1}^{d_k} c_{k,j} P^k_j(\xi) |\xi|^{-k} 
\end{equation}
where $ P^k_j(\xi) $ are solid harmonics of degree $ k $ so that their restrictions $ Y^k_j(\omega) $ to $ S^{n-1} $ form a basis for $ \mathcal{H}_k.$ Observe that $ T_{m_k} $ is a linear combination of the higher order Riesz transforms  $ R_{k,j} = P^k_j(\partial) (-\Delta)^{-k/2} $ with multipliers $ Y^k_j(\omega).$ The above expression for $ m_k $ plays an important role in the proof of Theorem \ref{thm-BP}. \\

In the case of Heisenberg group, we have the action of the unitary group $U(n) $ given by $ R_\sigma (z,t) = (\sigma^{-1}z,t) $ which gives rise to an action on functions. It is known that 
$$ \widehat{R_\sigma f}(\lambda) = \mu_\lambda(\sigma) \widehat{f}(\lambda) \mu_\lambda(\sigma)^\ast $$
where $ \mu_\lambda(\sigma) $ are certain unitary operators on $ L^2(\R^n).$ As in the Euclidean case we have the following relation for Fourier multipliers on $ \He^n$: 
$$ R_{\sigma}\circ T_m \circ R_{\sigma^{-1}} f = T_{R_\sigma m}f $$
where $ R_\sigma m(\lambda) = \mu_\lambda(\sigma) m(\lambda) \mu_\lambda(\sigma)^\ast .$ Let $ \delta = \delta_{a,b} $ be a class one representation of $ U(n) $ realised on $ \mathcal{H}_\delta ,$ the space of bi-graded spherical harmonics of bi-degree $ (a,b).$ Let $ \chi_\delta $ stand for the character of $ \delta $ and define
$$  m_\delta(\lambda)  = \int_{U(n)}  \chi_{\delta}(\sigma^{-1}) \mu_\lambda(\sigma) m(\lambda) \mu_\lambda(\sigma)^\ast \, d\sigma .$$ Then we have the following analogue of  (\ref{eq-1}):
\begin{equation}\label{eq-2} 
\int_{U(n)}  \chi_\delta(\sigma^{-1}) R_{\sigma}\circ T_m \circ R_{\sigma^{-1}} f \, d\sigma =  T_{m_\delta} f . \\
\end{equation}

As observed by Geller \cite{Geller}, the role of spherical harmonics $ Y^k_j $ or rather the solid harmonics $ P^k_j $ on $ \R^n $ are played by the Weyl correspondences $ G_\lambda(P^\delta_j) $ of solid harmonics $ P^\delta_j $ coming from a basis of $ \mathcal{H}_\delta.$ We refer to  subsections \ref{subsec-Heisen} and \ref{subsec-sph-harmonics} for the definition and properties of these unbounded operators. We let $ H(\lambda) = -\Delta+\lambda^2 |x|^2 $ denote the scaled Hermite operator whose spectrum consists of points of the form $ (2k+n)|\lambda|, k \in \N.$  The operators $ G_\lambda(P^\delta_j)H(\lambda)^{-(a+b)/2} $ are uniformly bounded on $ L^2(\R^n) $ and define Fourier multipliers on $ L^2(\He^n) $ which are  the higher order Riesz transforms on the Heisenberg group. \\

We are interested in the boundedness of $ T_m $ on the Sobolev spaces $ W^{N,1}(\He^n) $ or the homogeneous Sobolev spaces $ \mathring{W}^{N,1}(\He^n) $ on the Heisenberg group. These Sobolev spaces are defined in terms of certain left invariant vector fields which are the counter parts of partial derivatives on $ \R^n.$ It has been proved in \cite{Jit-Radha-V} that the class of Fourier multipliers on the Sobolev spaces $ W^{N,p}(\He^n) $ coincides with the class of Fourier multipliers  on $ L^p(\He^n) $ for $ 1 < p < \infty.$ They have also obtained an abstract characterisation of Fourier multipliers on $ W^{N,1}(\He^n).$ Consider now the following condition on the multiplier $m$: 
\begin{equation}\label{rep-hei} 
m_\delta(\lambda) = \sum_{j=1}^{d(\delta)} c_{\delta,j}(\lambda) G_\lambda(P^\delta_j) H(\lambda)^{-(a+b)/2}. 
\end{equation}
It is clear from the work of Geller that the eigenspaces of $ H(\lambda) $ corresponding to the eigenvalues $ (2k+n)|\lambda| $ are analogues of spheres of radius $ \sqrt{(2k+n)|\lambda|} .$ In view of this, it is clear that  (\ref{rep-hei}) is the analogue of (\ref{rep-euc}). Unlike the Euclidean case, (\ref{rep-hei}) is not automatically satisfied under the assumption that $ m $ is homogeneous of degree zero. In view of this remark the following result is the exact analogue of Theorem \ref{thm-BP}.

\begin{theorem}\label{thm2} 
Assume that the (left) Fourier multiplier $m,$ which is homogeneous of degree zero, satisfies the condition (\ref{rep-hei}) for any class one representation $ \delta $ of $ U(n).$ If the corresponding operator $ T_m $ is bounded either on $ W^{N,1}(\He^n)$ or $ \mathring{W}^{N,1}(\He^n),$ then $ m(\lambda) = c_1\chi_{\R^+}(\lambda) I +c_2\chi_{\R^+}(-\lambda) I.$
\end{theorem}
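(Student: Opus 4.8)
The plan is to mimic the structure of the Bonami--Poornima argument, exploiting the projection formula \eqref{eq-2} together with the explicit representation \eqref{rep-hei}. First I would reduce, using \eqref{eq-2}, to the study of the ``isotypic pieces'' $m_\delta$. Since $T_m$ is bounded on $W^{N,1}(\He^n)$ (resp.\ $\mathring W^{N,1}(\He^n)$), and since conjugation by the $U(n)$-action $R_\sigma$ is an isometry on these Sobolev spaces (the defining left-invariant vector fields transform among themselves under $R_\sigma$), averaging against $\chi_\delta$ shows that each $T_{m_\delta}$ is also bounded on the same Sobolev space, with norm controlled by $\|T_m\|$. Hence it suffices to show: if $T_{m_\delta}$ is bounded on $W^{N,1}(\He^n)$ (or its homogeneous version) and $\delta$ is nontrivial, then $m_\delta = 0$; and separately that for the trivial representation the only admissible multipliers are the radial constants $c_1\chi_{\R^+}(\lambda)I + c_2\chi_{\R^+}(-\lambda)I$ dictated by homogeneity of degree zero.

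For the nontrivial $\delta = \delta_{a,b}$ with $(a,b)\neq(0,0)$, by \eqref{rep-hei} the operator $T_{m_\delta}$ is a ``variable-coefficient'' linear combination of the higher-order Riesz transforms $G_\lambda(P^\delta_j)H(\lambda)^{-(a+b)/2}$; I would first argue that homogeneity of degree zero forces the coefficients $c_{\delta,j}(\lambda)$ to be of the form $c^+_{\delta,j}\chi_{\R^+}(\lambda) + c^-_{\delta,j}\chi_{\R^+}(-\lambda)$ for constants $c^\pm_{\delta,j}$ --- this follows from feeding \eqref{rep-hei} into the homogeneity relation \eqref{hom} and using that the $G_\lambda(P^\delta_j)$ have a definite homogeneity under the dilations $d_r$ while $H(\lambda)$ scales as $|\lambda|$ times $H(\pm1)$. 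Thus $T_{m_\delta}$ reduces to a fixed (constant-coefficient) linear combination of genuine higher-order Riesz transforms on $\He^n$. The heart of the matter is then to show that a nonzero such combination cannot be bounded on $W^{N,1}(\He^n)$. The idea, as in the Euclidean case, is to test against an approximate identity: apply $T_{m_\delta}$ to $\phi_\epsilon$ (or a suitable left-invariant derivative of a bump concentrating at the origin), compute the leading singularity of the convolution kernel --- which is $c$ times a homogeneous-of-degree-$(-Q)$ distribution built from $G_\lambda(P^\delta_j)$, i.e.\ a nonzero spherical-harmonic-type kernel with vanishing mean on Heisenberg spheres --- and exhibit a logarithmic blow-up of the $W^{N,1}$ (or $\mathring W^{N,1}$) norm, contradicting boundedness unless the kernel vanishes. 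The nonvanishing/orthogonality input is precisely Geller's interpretation of the $G_\lambda(P^\delta_j)$ as the Heisenberg analogues of nonconstant spherical harmonics: their associated kernels are not absolutely integrable near the origin and have no $L^1$ ``primitive'' of order $N$, which is what the Sobolev boundedness would demand.

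For the trivial representation $\delta_{0,0}$, the projection $m_{0,0}(\lambda)$ commutes with every $\mu_\lambda(\sigma)$, hence (by Schur-type reasoning for the metaplectic-type representation, equivalently because it is radial in the Hermite sense) is a function of $H(\lambda)$ alone; homogeneity of degree zero via \eqref{hom} then forces it to be constant on the spectrum of $H(\pm1)$, i.e.\ $m_{0,0}(\lambda) = c_1\chi_{\R^+}(\lambda)I + c_2\chi_{\R^+}(-\lambda)I$. Combining: $m = \sum_\delta m_\delta = m_{0,0}$ has the asserted form. The main obstacle I anticipate is the last step of the nontrivial case --- making rigorous the claim that a nonzero linear combination of higher-order Heisenberg Riesz transforms fails to be bounded on $W^{N,1}$; on $\R^n$ this is where Bonami and Poornima use the precise structure \eqref{rep-euc} and an explicit extremal sequence, and here one must replace Euclidean spherical harmonics and the Fourier transform by the Weyl-transform picture, controlling the Laguerre/Hermite expansions of the kernels and the action of the left-invariant vector fields on them. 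I expect the homogeneous space $\mathring W^{N,1}$ to be the cleaner setting, with the inhomogeneous case following via the inclusion of multiplier spaces analogous to the Euclidean remark after Theorem~\ref{thm-BP}.
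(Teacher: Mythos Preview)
Your reduction via the projection formula \eqref{eq-2} and the conclusion that each $T_{m_\delta}$ inherits boundedness on the same Sobolev space is correct and matches the paper exactly. The divergence is in how you handle the two remaining steps.

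\medskip
\textbf{The nontrivial $\delta$.} You propose to show directly, by testing on approximate identities and exhibiting a logarithmic blow-up of the kernel, that a nonzero combination of Heisenberg Riesz transforms cannot be bounded on $W^{N,1}(\He^n)$. You yourself flag this as the main obstacle, and indeed the paper does \emph{not} attempt this. Instead it uses a contraction argument: one introduces the deformed groups $\He^n_\varepsilon$ (same underlying manifold, group law $(z,t)(w,s)=(z+w,t+s+\tfrac{\varepsilon}{2}\Im(z\cdot\bar w))$), transfers the Riesz transforms $R_P$ to $R_P^\varepsilon$ via the isomorphism $\varphi_\varepsilon(z,t)=(\varepsilon^{-1/2}z,t)$, and then lets $\varepsilon\to 0$. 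The vector fields $X_j^\varepsilon,Y_j^\varepsilon$ converge to $\partial_{x_j},\partial_{y_j}$, the sublaplacian degenerates to $-\Delta_{\C^n}$, and (after a regularisation $(\gamma^2+\tilde{\mathcal L}_\varepsilon)^{-1/2}$ to make kernels integrable) one shows that boundedness of $R_P$ on $W^{N,1}(\He^n)$ or $\mathring W^{N,1}(\He^n)$ would force boundedness of a genuine Euclidean Riesz transform on $\mathring W^{N,1}(\C^n)$, contradicting Bonami--Poornima directly. This bypasses entirely the kernel analysis you anticipate; your route may be feasible but is not what the paper does, and you have not supplied the argument.

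\medskip
\textbf{Two concrete gaps.} First, your treatment of the trivial representation is incorrect: knowing that $m_{0,0}(\lambda)$ commutes with every $\mu_\lambda(\sigma)$ gives only that it is a function of $H(\lambda)$, i.e.\ $m_{0,0}(\lambda)=\sum_k c_k P_k(\lambda)$. But any such spectral multiplier with $c_k$ independent of $\lambda$ already satisfies the homogeneity relation \eqref{hom} (since $P_k(\lambda)=d_{\sqrt{|\lambda|}}P_k d_{\sqrt{|\lambda|}}^{-1}$), so homogeneity does \emph{not} force $c_k$ to be constant in $k$. You must instead invoke \eqref{rep-hei} once more for $\delta=\delta_{0,0}$, which gives $m_{0,0}(\lambda)=c(\lambda)\,G_\lambda(1)H(\lambda)^0=c(\lambda)I$ directly; then homogeneity forces $c(\lambda)=c(\operatorname{sgn}\lambda)$. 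Second, your final remark that the inhomogeneous case should follow from the homogeneous one ``via the inclusion of multiplier spaces analogous to the Euclidean remark after Theorem~\ref{thm-BP}'' is precisely what the paper says is \emph{not known} on $\He^n$: the authors state explicitly that they do not know whether every $W^{N,1}(\He^n)$ multiplier is a $\mathring W^{N,1}(\He^n)$ multiplier, and for this reason the contraction argument is carried out separately in each case.
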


Unlike the Euclidean case, we  do not know if every $ W^{N,1}(\He^n) $ multiplier is automatically a $ \mathring{W}^{N,1}(\He^n) $ multiplier. Hence, separate proofs have to be given for each case. It is still an open question if the above theorem is true without the extra assumption (\ref{rep-hei}) on the multiplier $ m.$ We plan to address this question elsewhere. \\

In this article, we also study  the boundedness of Weyl multipliers on $L^1(\mathbb{C}^{n})$ and on Laguerre Sobolev spaces $W^{N,1}_{L}(\mathbb{C}^{n})$ and prove an analogue of  Theorem \ref{thm-BP}  in the context of Weyl multipliers. \\

First we need to recall some definitions briefly, postponing more details to later sections. Let $W$ denote the Weyl transform which takes functions on  $\mathbb{C}^{n}$ into bounded linear operators on $ L^2(\R^n).$  The Weyl transform $ W $ or more generally the family of Weyl transforms $ W_\lambda $ are closely related to the Fourier transform on $ \He^n,$ see subsection \ref{subsec-Heisen}. $ W $  shares many properties with the Fourier transform: e.g. there is a Plancherel theorem for the Weyl transform: for $ f \in L^2(\C^n) $ its Weyl transform is a Hilbert-Schmidt operator and $ \| W(f)\|_{HS} = \|f\|_2.$ Consequently, given a bounded linear operator $ M $ on $ L^2(\R^n) $ we can define an operator $ T_M $ on $ L^2(\C^n) $ by the prescription $ W(T_Mf) = MW(f).$ In view of the Plancherel theorem it is clear that $ T_M $ is a bounded operator on $ L^2(\C^n).$ Such operators are called Weyl multipliers and $ M $ is called the (left) Weyl multiplier. If $ T_M $ initially defined on $ L^p \cap L^2(\C^n) $ extends to $ L^p(\C^n) $ as a bounded operator we say that $ M $ is an $ L^p $ Weyl  multiplier. Similar definition applies to other function spaces $ X $ on $ \C^n $ which are required to be (twisted) translation invariant. \\

In \cite{Mauceri}, Mauceri initiated  the study of Weyl multipliers on Lebesgue spaces. He obtained some sufficient conditions on a bounded linear operator $M$ on $L^p(\mathbb{R}^n)$ for the Weyl multiplier $T_M$ to be bounded on $L^p(\mathbb{C}^n)$. Later, using Littlewood-Paley-Stein theory, the third author obtained a sharper result when $ M = m(H) $ is a function of the Hermite operator, see \cite{Thangavelu-LPS}. We also refer to the recent works of Bagchi-Thangavelu \cite{Bagchi-Thangavelu-1} and \cite{Bagchi-Thangavelu-2}. In \cite{Radha-Thangavelu}, Radha and Thangavelu took up the problem of the Weyl multipliers on Laguerre-Sobolev spaces $ W_L^{N,p}(\C^n).$ Here $ L $ stands for the special Hermite operator, also called the twisted Laplacian which plays the role of $ - \Delta $ in the context of Weyl multipliers. The Laguerre Sobolev spaces are defined in terms of certain vector fields associated to $ L$, see subsection \ref{subsec-Laguerre-Sob} for details. Analogous to the work of Poornima \cite{Poornima}, it was shown in \cite{Radha-Thangavelu} that for $1 < p<\infty$, the space of Weyl multipliers of any Laguerre Sobolev space $W^{N,p}_{L}(\mathbb{C}^{n})$ coincides with that of $L^p(\mathbb{C}^{n})$ with norm equivalence. They also characterised the space of Weyl multipliers of $W^{N,1}_{L}(\mathbb{C}^{n})$, showing it to be the dual of certain function space. \\

In the present article, we address the case of $p=1$ for Weyl multipliers, in the spirit of the work done in \cite{BON-POOR}. Typical examples of  Fourier multipliers which are bounded on $ L^p(\R^n) $ for all $ 1 < p < \infty $ but  not bounded on $ L^1(\R^n) $ are provided by the Riesz transforms $ R_j $ defined by $ \widehat{R_j f}(\xi) = i \frac{\xi_j}{|\xi|} \widehat{f}(\xi).$  As the associated multipliers $ m_j(\xi) = i \frac{\xi_j}{|\xi|} $ are non-constant functions which are homogeneous of degree zero, they are certainly unbounded on $ L^1(\R^n) $ and $ W^{N,1}(\R^n) $ and it follows from the theorem of Bonami and Poornima that $ R_j $ are also not bounded on any of the homogeneous Sobolev spaces $ \mathring{W}^{N,1}(\R^n).$  \\

Analogues of the Riesz transforms $ R_j $ in the context of the Weyl multipliers are given by the multipliers $ M_j = A_j H^{-1/2} $ where $ A_j = \frac{\partial}{\partial x_j}+x_j $ are the annihilation operators and $ H = H(1) = -\Delta+|x|^2 $ is the simple harmonic oscillator or the Hermite operator. It is known that the operators $ T_{M_j} $ are bounded on $ L^p(\C^n) $ for $ 1 < p \leq \infty $ but not on $ L^1(\C^n).$ In this paper we will show that they are also unbounded on the Laguerre-Sobolev spaces $ W_L^{N,1}(\C^n).$ \\

The operators $ T_{M_j} $ are indeed the Riesz transforms associated to the special Hermite operator and their unboundedness on the Sobolev spaces $ W_L^{N,1}(\C^n)$ will be used to prove an analogue of Theorem \ref{thm-BP} for Weyl multipliers. In order to formulate our result, we need a  notion of homogeneity for bounded operators on $ L^2(\R^n).$ \\

Given a bi-graded solid harmonic $ P $ we let $ G(P) = G_1(P) $ stand for the Weyl correspondence associated to $ P.$  For any class one representation $ \delta = \delta_{a,b} $ of $ U(n) $ realised on the Hilbert space $ \mathcal{H}_\delta  \subset L^2(S^{2n-1}) $ we choose an orthonormal basis $ Y^\delta_j , j =1, 2, \ldots, d(\delta) $ which are restrictions of the solid harmonics  $ P^\delta_j $ to the unit sphere. Let $ E_k $ stand for the finite dimensional subspace of $ L^2(\R^n) $ consisting precisely of eigenfunctions of $ H $ with eigenvalue $ (2k+n).$  Let $ P_k: L^2(\R^n) \rightarrow E_k $ be the orthogonal projection. Geller \cite{Geller} has shown that the family $ G(P^\delta_j), k \geq a, j = 1, 2, \ldots, d(\delta) $, where $ \delta = \delta_{a,b} $ runs along all class one representations of $ U(n) $, is an orthogonal basis for the space  of all bounded linear operators taking $ E_k $ into $ L^2(\R^n).$ By suitable choice of constants we can make $ C_\delta(2k+n)^{-1} G(P^\delta_j) $ form an orthonormal basis. More generally, the collection 
$$ S^\delta_{j,k} =  C_\delta(2k+n)^{-1} G(P^\delta_j) P_k $$ forms an orthonormal basis for $ \mathcal{S}_2 \subset B(L^2(\R^n)) $ which is the Hilbert space of Hilbert-Schmidt operators on $ L^2(\R^n) $ equipped with the inner product $ (T,S) = tr(S^\ast T).$\\

We now consider the following expansion for any $ M \in B(L^2(\R^n)).$  The Hilbert-Schmidt operator $ MP_k $ can be expanded as
\begin{equation} \label{MPk-Geller}
MP_k = \sum_{\delta} \sum_{j=1}^{d(\delta)} C_\delta(2k+n)^{-2} (MP_k, G(P^\delta_j)P_k) G(P^\delta_j) P_k.
 \end{equation} 
 A simple  calculation making use of the  orthonormality of $ S^\delta_{j,k} $ 
 leads us to 
\begin{equation} 
\|MP_k\|_{HS}^2 = \sum_{\delta} \sum_{j=1}^{d(\delta)} C_\delta(2k+n)^{-2} |(MP_k, G(P^\delta_j)P_k)|^2. 
\end{equation} 
Since $ MP_k $ is the operator analogue of `restricting a function to a sphere in $ \R^n$', we would like to impose the condition that $ \|MP_k\|_{HS}^2 $ is independent of $k.$ This is guaranteed, for example, if  the coefficients in the above expansion of the operator $ M $  satisfy the condition
$ C_\delta(2k+n)^{-1} (MP_k, G(P^\delta_j)P_k) = B_{\delta,j} . $ If that is the case the expansion \eqref{MPk-Geller} reduces to
\begin{equation} \label{op-hom}
MP_k = \sum_{\delta} \sum_{j=1}^{d(\delta)} B_{\delta,j} C_\delta(2k+n)^{-1} G(P^\delta_j) P_k. 
\end{equation} 
We say that $ M $ is homogeneous of degree zero if the expansion \eqref{op-hom} holds for every $ k.$ (We refer to subsection \ref{subsec-hom-degree-0} for further discussions of this definition.) Working with this definition, we prove the following result which is the exact analogue of Theorem \ref{thm-BP} for Weyl multipliers.

\begin{theorem} \label{Main-Weyl}
Let $M$ be a non-trivial (left) Weyl multiplier on $W^{N,1}_{L}(\mathbb{C}^{n})$ for some positive integer $N$. If $M$ is homogeneous of degree zero then $M$ has to be a constant multiple of the identity operator. 
\end{theorem}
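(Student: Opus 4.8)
The plan is to transplant the Bonami--Poornima proof of Theorem~\ref{thm-BP} to the twisted, operator-valued setting: use $U(n)$-averaging to reduce a general homogeneous multiplier to the higher-order Riesz transforms of the special Hermite operator, and then show that these fail to be bounded on $W^{N,1}_{L}(\mathbb{C}^{n})$ unless they are trivial. \emph{Step 1 (reduction to isotypic components).} Since $L$ and the twisted vector fields defining the Laguerre--Sobolev norm are $U(n)$-covariant, the rotations $R_\sigma$ act (uniformly) boundedly on $W^{N,1}_{L}(\mathbb{C}^{n})$; averaging the intertwining relation $R_\sigma\circ T_M\circ R_{\sigma^{-1}}=T_{R_\sigma M}$ against the character $\chi_\delta$ of a class-one representation $\delta=\delta_{a,b}$ --- the Weyl analogue of \eqref{eq-2} --- shows that boundedness of $T_M$ on $W^{N,1}_{L}(\mathbb{C}^{n})$ implies boundedness of each $T_{M_\delta}$, with norm controlled by that of $T_M$. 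Because $M$ is homogeneous of degree zero, the Geller expansion \eqref{MPk-Geller} collapses onto its $\delta$-block and the $k$-independence in \eqref{op-hom} gives $M_\delta=\sum_{j=1}^{d(\delta)}c_{\delta,j}\,G(P^\delta_j)H^{-(a+b)/2}$, a finite combination of higher-order twisted Riesz transforms; for $\delta=\delta_{0,0}$ this reads $M_{\delta_{0,0}}=c\,I$. Since $MP_k=\sum_\delta M_\delta P_k$ for every $k$ by \eqref{MPk-Geller}, it now suffices to prove that $c_{\delta,j}=0$ for all $j$ whenever $\delta$ is nontrivial.

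\emph{Step 2 (first order).} For $\delta=\delta_{0,1}$ and $\delta=\delta_{1,0}$ the relevant solid harmonics are the (anti)holomorphic linear forms, so $M_\delta$ is a constant multiple of $\sum_j c_jA_jH^{-1/2}$, respectively $\sum_j c_jA_j^{*}H^{-1/2}$. Conjugating $T_{M_\delta}$ by $R_\sigma$ corresponds, on the representation side, to conjugating $M_\delta$ by the metaplectic unitary $\mu(\sigma)$, which commutes with $H$ and sends $A_j$ to a linear combination of the $A_k$; by transitivity of $U(n)$ on the unit sphere of $\mathbb{C}^{n}$ we may choose $\sigma$ so that $\sum_jc_jA_j$ becomes $\big(\sum_j|c_j|^2\big)^{1/2}A_1$. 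Thus $T_{M_\delta}$ is bounded on $W^{N,1}_{L}(\mathbb{C}^{n})$ only if the single Riesz transform $A_1H^{-1/2}$ is, which it is not (the key input recalled in the introduction); the case $\delta_{1,0}$ follows the same way, using $A_1^{*}H^{-1/2}$ together with the symmetry exchanging creation and annihilation operators. Hence $c_{\delta,j}=0$ for all $j$ when $a+b=1$.

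\emph{Step 3 (higher order).} For $\delta=\delta_{a,b}$ with $a+b\ge 2$ we induct on $a+b$. Geller's raising/lowering relations express $G(P^\delta_j)$ through $A_iG(\cdot)$ and $A_i^{*}G(\cdot)$ with a bigraded harmonic of strictly smaller bidegree, modulo contraction terms which, after sorting by $U(n)$-type, belong to the span of $\{G(P^{\delta'}_{j'})H^{-(a'+b')/2}:a'+b'<a+b\}$ and hence vanish by the inductive hypothesis. So $T_{M_\delta}$ agrees, modulo a bounded operator, with a nonzero linear combination of $(a+b)$-fold compositions of first-order twisted Riesz transforms; a further $U(n)$-rotation reduces this to iterates of $A_1H^{-1/2}$ and its adjoint, and bootstrapping the first-order statement (via the multiplier-algebra identity $T_MT_{M'}=T_{MM'}$) shows that any nonzero such combination is unbounded on $W^{N,1}_{L}(\mathbb{C}^{n})$. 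Therefore $c_{\delta,j}=0$, the induction closes, and combining with Steps~1--2 we obtain $M_\delta=0$ for all nontrivial $\delta$, so $MP_k=c\,P_k$ for every $k$ and $M=c\,I$; non-triviality of $M$ then forces $c\neq 0$.

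The decisive difficulty lies in Step~3. One must first check that Geller's recursion really produces only lower-bidegree Riesz transforms plus bounded remainders --- a Clebsch--Gordan computation for the $U(n)$-action on Hilbert--Schmidt operators into $E_k$ --- and, more seriously, one must establish that a nonzero iterated first-order Riesz transform, such as a $U(n)$-rotate of $(A_1H^{-1/2})^m$, remains unbounded on $W^{N,1}_{L}(\mathbb{C}^{n})$. Here the clean tool available to Bonami and Poornima --- the Calder\'on--Zygmund homogeneity of the Euclidean higher-order Riesz kernels --- has no direct counterpart, since the twisted convolution kernels of these operators (sums of products of Laguerre functions) obey only the non-isotropic, inhomogeneous scaling of the special Hermite semigroup; the argument must therefore proceed either by the bootstrap just indicated or by a concentration estimate tailored to twisted convolution on $\mathbb{C}^{n}$.
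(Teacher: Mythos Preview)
Your reduction in Step~1 is essentially the paper's Lemmas~\ref{Weyl-rot} and~\ref{lem-M-delta-hom}, but note a slip: the $k$-independence in \eqref{op-hom} gives $M^\delta=\sum_j B_j^\delta\,G(P_j^\delta)\,C_\delta(H)^{-1}$, not $G(P_j^\delta)H^{-(a+b)/2}$. Replacing $C_\delta(H)^{-1}$ by $H^{-(a+b)/2}$ requires showing that $C_\delta(H)H^{-(a+b)/2}$ is an $L^1$ Weyl multiplier, which is the content of the paper's Lemma~\ref{lem-piece-transfer} and is not entirely trivial (it involves a power-series analysis of the Laguerre kernels).

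The serious gap is in Step~2. You invoke as ``the key input recalled in the introduction'' that $A_1H^{-1/2}$ fails to be a multiplier on $W^{N,1}_L(\mathbb{C}^n)$; but the introduction says only that $T_{A_jH^{-1/2}}$ is unbounded on $L^1(\mathbb{C}^n)$, and then announces that the unboundedness on $W^{N,1}_L(\mathbb{C}^n)$ \emph{will be proved in this paper}. Since left Weyl multipliers commute with the twisted vector fields $Z_j,\overline{Z}_j$, every $L^1$ multiplier is automatically a $W^{N,1}_L$ multiplier, so the class of $W^{N,1}_L$ multipliers is a priori larger and unboundedness on $L^1$ tells you nothing. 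Your Step~2 is therefore circular: it assumes the very fact the theorem is meant to establish. The same circularity propagates into Step~3, where the bootstrap rests on Step~2; and as you yourself note, the induction has further unverified pieces (the Clebsch--Gordan bookkeeping and, more fundamentally, why a nonzero iterate of first-order Riesz transforms should remain unbounded).

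The paper avoids all of this with a single scaling argument that treats every $\delta$ at once. After reducing to $G(P)H^{-(a+b)/2}$ with $P\in\mathcal{H}_{a,b}$, one uses irreducibility of $\delta_{a,b}$ to pass to the monomial $P(z)=z_j^a\bar z_k^b$ (Lemma~\ref{lem-solid-harmonic-equiv}), for which $G(P)=c\,(A_k^\ast)^bA_j^a$. Then one exploits the dilation covariance $(R_{jk}f_\lambda)(\sqrt{\lambda}\,z)=R_{jk}(\lambda)f(z)$: if $R_{jk}$ were bounded on $\mathring W^{1,1}_L(\mathbb{C}^n)$, the rescaled inequality would hold uniformly in $\lambda$, and Lemma~\ref{conv-scaled-riesz} shows that as $\lambda\to 0^+$ the operator $R_{jk}(\lambda)$ converges pointwise (on Schwartz functions) to the Euclidean higher-order Riesz transform with multiplier $(\xi_k-i\eta_k)^b(\xi_j+i\eta_j)^a|\zeta|^{-(a+b)}$. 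Fatou's lemma then yields boundedness of this Euclidean Riesz transform on $\mathring W^{1,1}(\mathbb{C}^n)$, contradicting Bonami--Poornima (Theorem~\ref{thm-BP}) unless $a=b=0$. This dilation-to-zero trick is the missing idea in your proposal; it supplies the unboundedness of the twisted Riesz transforms on $W^{N,1}_L$ by transferring the question back to $\mathbb{R}^{2n}$, rather than trying to prove it intrinsically on $\mathbb{C}^n$.
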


\medskip \noindent \textbf{Organisation of the paper:}  In  Section \ref{sec-prelim} we recall  all the relevant preliminaries on the Heisenberg group and the Weyl transform and also define the associated Sobolev spaces. In subsection \ref{subsec-sph-harmonics} we describe Geller's analysis \cite{Geller} on operator analogues of spherical harmonics, and then define a notion of zero homogeneity for operators  in subsection \ref{subsec-hom-degree-0}. The proof of Theorem \ref{Main-Weyl} for Weyl multipliers on Laguerre Sobolev spaces is developed in Section \ref{sec-Weyl-mult-proof}. We discuss Fourier multipliers on the Heisenberg group in Section \ref{sec-mult-Heisenberg-proof}, finally proving Theorem \ref{thm2} in subsection \ref{subsec-hom-mult-Sob}.  

\medskip \noindent \textbf{Notations:} We denote by $\mathbb{N}$ the set of all non-negative integers $\{0, 1, 2, 3, \ldots\}$. For $z = (z_1, \ldots, z_n) \in \mathbb{C}^n$, we denote by $|z|$ the Euclidean norm $\left(|z_1|^2 + \ldots |z_n|^2\right)^{1/2}$, whereas, for a multi-index $\alpha = (\alpha_1, \ldots, \alpha_n) \in \mathbb{N}^n$, we use the same symbol $|\cdot|$ to denote the $l^1$ sum, that is, $|\alpha| = \alpha_1 + \ldots \alpha_n$. Also, $z^\alpha$ stands for $\prod_{j=1}^n z_j^{\alpha_j}$. Throughout the article, $W^{0,p}(\mathbb{R}^{2n})$ and $\mathring{W}^{0,p}(\mathbb{R}^{2n})$ should be understood as $L^p(\mathbb{R}^{2n})$. A similar remark applies to Laguerre Sobolev spaces and Sobolev spaces on the Heisenberg group. We use symbols such as $C$, $C_\epsilon$ etc to denote explicit/implicit positive constants, with $C_\epsilon$ may be depending on the parameter $\epsilon$. These constants need not be equal at different occurrences unless explicitly mentioned. For any two non-negative numbers $A$ and $B$, we write $A \lesssim_\epsilon B$ when there exists some $C_\epsilon >0$ such that $A \leq C_\epsilon B$.


\section{Preliminaries and basic results} \label{sec-prelim}

In this section we set up the notation by defining the Heisenberg group and recalling all the relevant definitions and basic results that are required for this work.  Apart from Fourier and Weyl transforms we also need to recall the notion of Weyl correspondence in order to describe Geller's work on operator analogues of spherical harmonics. We also  introduce the sublaplacian $ \mathcal{L} $ and the special Hermite operators $ L(\lambda) $ and define the associated Sobolev spaces. The main references for this section are the monographs \cite{Folland},\cite{thangavelu} and the paper \cite{Geller}.

\subsection{Heisenberg group, Weyl transform and Sobolev spaces} \label{subsec-Heisen}
Let $\mathbb{H}^{n}$ denote the $(2n+1)$ dimensional Heisenberg group with the group law
$$(z,t)(w,s)=(z+w, t+s+ \frac{1}{2} \Im (z \cdot \bar{w})).$$ 
The Haar measure on $\mathbb{H}^{n}$ is the Lebesgue measure $dz \, dt$ of $\mathbb{C}^{n} \times \mathbb{R}$. For each fixed $\lambda \in  \R^\ast= \mathbb{R} \setminus \left\lbrace 0\right\rbrace$, we have an irreducible unitary representation $\pi_{\lambda}$ of $\mathbb{H}^{n}$ realised on $L^{2}(\mathbb{R}^{n})$, defined by 
$$\pi_{\lambda}(z,t) \phi(\xi) = e^{i \lambda t} e^{i \lambda (x\cdot\xi+ \frac{1}{2}x\cdot y)} \phi(\xi+y),$$ 
where $\phi\in L^{2}(\mathbb{R}^{n})$ and $z=x+ iy$. By the celebrated theorem of Stone-von Neumann, up to unitary equivalence these are all the irreducible unitary representations of $\mathbb{H}^{n}$ which are nontrivial at centre (see \cite{Folland}). These representations are used in defining the Fourier transform on the Heisenberg group.\\

Given $f \in L^1(\mathbb{H}^{n})$ and $ \lambda \in \R^\ast $ we can integrate $ f $ against $ \pi_\lambda $ to get a bounded linear operator
$$\widehat{f}(\lambda) = \int_{\mathbb{H}^{n}} f(z,t) \pi_{\lambda}(z,t) \, dz \, dt. $$ 
The operator valued function $\lambda \to \widehat{f}(\lambda)$ is called the (group) Fourier transform of $f$ on $\mathbb{H}^{n}$. For $f, g \in L^1(\mathbb{H}^{n}),$ we define the  convolution $ f \ast g $ by
$$f \ast g (z,t)=\int_{\mathbb{H}^{n}} f\left((z,t)(w,s)^{-1}\right) g(w,s) \, dw \, ds.$$
It then follows by direct verification that $ \widehat{f \ast g}(\lambda) = \widehat{f}(\lambda) \widehat{g}(\lambda).$ For $ f \in L^1 \cap L^2(\mathbb{H}^n) $ it can be proved that $ \widehat{f}(\lambda) $ is a Hilbert-Schmidt operator and we have the Plancherel theorem
\begin{equation} \label{planch} 
\int_{\mathbb{H}^n} |f(z,t)|^2 \, dz\, dt = (2\pi)^{-n-1} \int_{-\infty}^\infty \| \widehat{f}(\lambda)\|_{HS}^2 |\lambda|^n \, d\lambda. 
\end{equation}
If we denote by $f^\lambda$ the inverse Fourier transform of $f$ in the last variable at the point $\lambda,$ that is, $f^\lambda(z) = \int_{\mathbb{R}} f(z,t) e^{i \lambda t} \, dt,$ then we can easily verify that $(f \ast g)^{\lambda} = f^{\lambda} \ast_{\lambda} g^{\lambda},$ where $\ast_{\lambda}$ is called the $\lambda-$twisted convolution on $\mathbb{C}^n$ and is defined by 
$$F \ast_{\lambda} G (z) = \int_{\mathbb{C}^{n}}F(z-w)G(w) e^{i \frac{\lambda}{2}  \Im(z\cdot\overline{w})} \, dw,$$
for any $F, G \in L^1(\mathbb{C}^n).$ When $ \lambda = 1 $ we  write $ F \times G $ instead of $ F \ast_1 G $ and call this the twisted convolution of $ F $ with $ G.$\\

The above relation motivates one to define the Weyl transform on $\mathbb{C}^n$. More precisely, the Weyl transform $W_{\lambda}(f)$ of $f\in L^{1}\cap L^{2}(\mathbb{C}^{n})$ is an operator on $L^{2}(\mathbb{R}^{n})$ defined by 
\begin{align} \label{def-Weyl}
W_{\lambda}(f)\phi(\xi) := \int_{\mathbb{C}^{n}}f(z)\pi_{\lambda} (z,0)\phi(\xi) \, dz = \int_{\mathbb{C}^{n}} f(z) e^{i \lambda (x\cdot\xi+ \frac{1}{2} x \cdot y)} \phi(\xi+y) \, dz, 
\end{align}
where $z = x+iy$ and $\phi \in L^{2}(\mathbb{R}^{n})$. It is known that $W_{\lambda}$ maps $L^{1}(\mathbb{C}^{n})$ to the space of bounded operators on $L^{2}(\mathbb{R}^{n})$ and $W_{\lambda}$ maps $L^{2}(\mathbb{C}^{n})$ unitarily onto the space of Hilbert-Schmidt operators on $L^{2}(\mathbb{R}^{n})$. We also have the Plancherel theorem:
$$  \int_{\C^n} |f(z)|^2 dz  = C_n |\lambda|^n \| W_\lambda(f)\|_{HS}^2.$$ Weyl transform also satisfies the following property, namely, it takes twisted convolution to the composition of operators:  
$$W_{\lambda}(f \ast_{\lambda}g) = W_{\lambda}(f) W_{\lambda}(g).$$ 
When $ \lambda = 1 $ we simply write $ W(f) $ instead of $ W_1(f).$\\

Along with Weyl transform we also require the closely related notion of Weyl correspondence. The symplectic Fourier transform of a function $ f \in L^1(\C^n) $ is defined by
$$  \mathcal{F}_\lambda f(z) = (2\pi)^{-n} \int_{\C^n} f(z-w) e^{i \frac{\lambda}{2} \Im(z \cdot \bar{w})} \, dw  = f \ast_\lambda 1(z).$$ Note that the symplectic Fourier transform is related to the ordinary Fourier transform: $ \mathcal{F}_\lambda f(z) = \widehat{f}(-\frac{i}{2}\lambda z).$ The symplectic Fourier transform has  natural extension to all tempered distributions. We define the Weyl correspondence of a tempered distribution $ f $ by $ G_\lambda(f) = W_\lambda(\mathcal{F}_\lambda f).$ A priori it is not clear if $ G_\lambda(f) $ defines an operator on $ L^2(\R^n).$ When $ f = P $ is a polynomial, $ \mathcal{F}_\lambda P $ is a finite sum of derivatives of the Dirac delta and hence $ G_\lambda(P) $ turns out to be a differential operator. We will make use of this fact in defining operator analogues of spherical harmonics.\\

The Heisenberg Lie algebra $ \mathfrak{h}^n $ is spanned by the following $ (2n+1) $ left invariant vector fields
$$ X_j =  \frac{\partial}{\partial x_j} + \frac{1}{2} y_j \frac{\partial}{\partial t}, \quad Y_j =  \frac{\partial}{\partial y_j} - \frac{1}{2} x_j \frac{\partial}{\partial t}, \quad j = 1, 2, \ldots, n, $$
and $ T = \frac{\partial}{\partial t}.$ The operator $ \mathcal{L} = - \sum_{j=1}^n \left(  X_j^2+Y_j^2 \right)  ,$ known as the sublaplacian plays the role of $ -\Delta $ for the Heisenberg group. More explicitly $ \mathcal{L} $ is given by
$$  \mathcal{L} = -\Delta_{\C^n} - \frac{1}{4} |z|^2 \frac{\partial^2}{\partial t^2} +  \sum_{j=1}^n \left(  x_j \frac{\partial}{\partial y_j} - y_j \frac{\partial}{\partial x_j} \right)  \frac{\partial}{\partial t} .$$
Along with the left invariant vector fields, we also need their right invariant analogues which are given by
$$ \tilde{X}_j =  \frac{\partial}{\partial x_j} - \frac{1}{2} y_j \frac{\partial}{\partial t}, \quad \tilde{Y}_j =  \frac{\partial}{\partial y_j} +\frac{1}{2} x_j \frac{\partial}{\partial t}, \quad j = 1, 2, \ldots, n,$$
and the right invariant sublaplacian is defined by $ \tilde{\mathcal{L}} = - \sum_{j=1}^n \left(  \tilde{X}_j^2+\tilde{Y}_j^2 \right)  .$  These operators generate contraction semigroups $ e^{-t\mathcal{L}} $ and $ e^{-t\tilde{\mathcal{L}}} $ which are given by convolutions with an explicit kernel:
$$ e^{-t\mathcal{L}}f(g) = f \ast p_t(g), \quad  e^{-t\tilde{\mathcal{L}}}f(g) = p_t \ast f(g), $$
where 
\begin{equation} \label{heat} 
p_t(w,s) =  2^{-n} (2\pi)^{-n-1}  \int_{-\infty}^\infty e^{-i\lambda s}\left( \frac{\lambda}{\sinh(t\lambda)}\right) ^n e^{-\frac{1}{4}\lambda (\coth(t\lambda))|w|^2} d\lambda. 
\end{equation}
Under the Fourier transform, the sublaplacians get converted into the Hermite operator $ H(\lambda) = -\Delta+\lambda^2 |x|^2$:
$$ \widehat{(\mathcal{L}f)}(\lambda) = \widehat{f}(\lambda) H(\lambda), \quad \widehat{(\tilde{\mathcal{L}}f)}(\lambda) = H(\lambda) \widehat{f}(\lambda). $$
These relations lead to the formulas
$$ \widehat{f\ast p_t}(\lambda) = \widehat{f}(\lambda) e^{-t H(\lambda)}, \quad \widehat{ p_t\ast f}(\lambda) = e^{-t H(\lambda)} \widehat{f}(\lambda).$$
For more about the heat kernel we refer to the monograph \cite{thangavelu}. \\

Using the vector fields $ X_j, Y_j, j=1, 2, \ldots, n $ we can define Sobolev spaces $ W^{N,p}(\He^n) $ and the homogeneous Sobolev spaces $ \mathring{W}^{N,p}(\He^n)$.  For multi-indices $ \alpha$ and $ \beta $ we set $ X^\alpha = \Pi_{j=1}^n X_j^{\alpha_j}, Y^\beta = \Pi_{j=1}^n Y_j^{\beta_j}.$ 
As in the Euclidean case we let
$$  W^{N,p}(\He^n) =  \{  f \in L^p(\He^n):  X^\alpha Y^\beta T^jf \in L^p(\He^n),  |\alpha|+|\beta|+2j \leq N\} $$
and we equip $W^{N,p}(\He^n) $ with the norm $ \| f\|_{W^{N,p}} =  \sum_{|\alpha|+|\beta|+2j \leq N} \| X^\alpha Y^\beta T^jf\|_p.$ Note that $X^\alpha Y^\beta T^j $ is homogeneous of degree $ |\alpha|+|\beta|+2j $ with respect to non-isotropic dilations. So it makes sense to define the homogeneous Sobolev spaces as
$$  \mathring{W}^{N,p}(\He^n) =  \{  f \in \mathcal{S}^\prime(\He^n):  X^\alpha Y^\beta T^jf \in L^p(\He^n),  |\alpha|+|\beta|+2j = N\} $$
equipped with the norm $ \| f\|_{\mathring{W}^{N,p}} =  \sum_{|\alpha|+|\beta|+2j = N} \| X^\alpha Y^\beta T^jf\|_p.$ 

Observe that when $ N =1 $ these Sobolev spaces are defined solely in terms of $ X_j $ and $ Y_j $ and there is no requirement on $ T.$\\

The above spaces are left-invariant in the sense that for any $ g \in \He^n $ the function $ \tau_g f $ defined by $ \tau_gf(h) = f(g^{-1}h) $ belongs to $W^{N,p}(\He^n)$ whenever $ f \in W^{N,p}(\He^n).$ And a similar remark applies to the homogeneous Sobolev spaces. Replacing the left-invariant vector fields by the right-invariant ones, viz. $\tilde{X_j}, \tilde{Y_j} $ we can define $ W_R^{N,p}(\He^n) $ and $ \mathring{W}_R^{N,p}(\He^n).$ These spaces are invariant under right translations. We remark that the map $ f \rightarrow f^\ast $ defined by $ f^\ast(g) = \overline{f(g^{-1})} $ takes $ W^{N,p}(\He^n)$ isometrically onto $W_R^{N,p}(\He^n) .$ This is a consequence of the easily verifiable fact that $ (X_jf)^\ast = \tilde{X_j}f^\ast, (Y_jf)^\ast = \tilde{Y_j}f^\ast.$  A similar remark applies to homogeneous Sobolev spaces also. We will make use of these relations in the sequel. 


\subsection{Fourier multipliers on the Heisenberg group} \label{subsec-Heisen-mult} 
Given an $ L^\infty $ function $ m $ on $ \R^\ast $ taking values in $ B(L^2(\R^n)) $ we can define an operator $ T_m $ on $L^2(\He^n) $ by the prescription $ \widehat{T_mf}(\lambda) = m(\lambda) \widehat{f}(\lambda) .$ In view of  the Plancherel theorem (\ref{planch}) it is immediate that such an operator is bounded on $ L^2(\He^n). $ However, without further conditions on $ m $ it need not extend from $ L^p \cap L^2(\He^n) $ to $ L^p(\He^n) $ as a bounded operator. If it happens, we say that $ m $ is an $ L^p $ Fourier multiplier on the Heisenberg group. Equivalently, we also say that $ T_m $ is a left Fourier multiplier operator on $ L^p(\He^n).$ We can also define right Fourier multiplier operators by $ \widehat{\tilde{T}_mf}(\lambda) = \widehat{f}(\lambda) m(\lambda).$ It is easy to see that left (resp. right) Fourier multiplier operators are invariant under right (resp. left) translations on the Heisenberg group.  As in the case of Fourier multipliers on $ \R^n $ these operators can be realised as convolution operators.\\

Let $ \mathcal{S}_1 $ stand for the ideal of trace class operators in $ B(L^2(\R^n)) $ and consider the space $ L^1(\R^\ast, \mathcal{S}_1, d\mu) $ where $ d\mu(\lambda) = (2\pi)^{-n-1} |\lambda|^n \, d\lambda $ is the Plancherel measure for $ \He^n.$  To every $ \varphi \in L^1(\R^\ast, \mathcal{S}_1, d\mu) $ we can associate the function
$$  \widetilde{\varphi}(z,t) = \int_{-\infty}^\infty  tr(\pi_\lambda(z,t)^\ast \varphi(\lambda)) d\mu. $$
The Fourier algebra $ A(\He^n) $ is then defined to be the space of all such functions $ \widetilde{\varphi} $ where $ \varphi \in L^1(\R^\ast, \mathcal{S}_1, d\mu) $ which becomes a Banach space when equipped with the norm
$$ \| \widetilde{\varphi} \|_{A} = \int_{-\infty}^\infty  tr(|\varphi(\lambda)|) d\mu .$$  We denote by  $ P(\He^n) $  the dual of $ A(\He^n).$ The elements of $ P(\He^n) $  are called pseudo-measures and it is known that to every left  Fourier multiplier we can associate a pseudo-measure $S $ such that $ T_m f = S \ast f,$ see the works \cite{MC} and \cite{RV}. In the case of right Fourier multipliers we have $ \tilde{T}_m f = f \ast S.$ It has been proved in \cite{RV} that $ P(\He^n) $ can also be identified with the dual of $ L^1(\R^\ast, \mathcal{S}_1, d\mu) $ and consequently, it is possible to talk about the Fourier transform of pseudo-measures.\\

As we are interested in the boundedness of left Fourier multipliers, a natural question that arises is the following: which Sobolev space is the most suitable one to work with? Since the operator $ T_m $ is right invariant, it may look natural to study the boundedness on the right invariant Sobolev space $W_R^{N,p}(\He^n) .$ However, the left invariant spaces $ W^{N,p}(\He^n) $ seem to be more suitable for the following reason. Suppose we know that $ T_m $ is bounded on $ L^p(\He^n) $ then it is immediate that it is also bounded on $ W^{N,p}(\He^n) $ since $ X_j (T_mf) = T_m(X_jf), Y_j(T_mf) = T_m(Y_jf).$ Thus the class of left Fourier multipliers on $ L^p(\He^n) $ is included in the class of left Fourier multipliers on $ W^{N,p}(\He^n).$ We do not have such an inclusion in the case of $ W_R^{N,p}(\He^n).$
In fact the boundedness of $ T_m $ on $ L^p(\He^n) $ and on $ W_R^{1,p}(\He^n)$ implies that the commutators $ [\tilde{X_j},T_m], [\tilde{Y}_j,T_m] $ satisfy the estimates
$$ \| [\tilde{X_j},T_m]f\|_p + \|[\tilde{Y}_j,T_m]f\|_p \leq C \|f\|_{W_R^{1,p}} .$$ 
However, even for a simple minded operator such as $ T_m f = \nu \ast f $ where $ \nu $ is a finite Borel measure, it is not clear if such an estimate is true or not. This explains why in Theorem \ref{thm2} we have considered $ T_m $ on $ W^{N,1}(\He^n).$


\subsection{Homogeneous multipliers on the Heisenberg group} \label{subsec-Heisen-hom-mult}
In the case of  Fourier multipliers  on $ \R^n $ we note that, with $ d_rf $ standing for the dilation $ d_rf(x) = f(rx), r>0, $  
\begin{equation}\label{dil-R} T_m (d_rf)(x) = r^{-n} (2\pi)^{-n} \int_{\R^n} e^{i x \cdot \xi} m(\xi) \widehat{f}(r^{-1}\xi) d\xi
\end{equation}
or equivalently, 
we have  $ T_m(d_rf)(r^{-1}x) =  T_{d_rm}f(x).$ Thus we see that the multiplier $ m $ is homogeneous of degree zero if and only $ T_m $ commutes with the dilation $ d_r $ for every $ r >0.$
The dilations $ d_r $ are automorphisms of the group $ \R^n $ and their counter parts in the context of the Heisenberg group are given by the non-isotropic dilations $ \delta_r $ defined by $ \delta_r(z,t) = (rz,r^2t).$ The connection between the (group) Fourier transforms of $ f $ and $ \delta_rf(z,t) = f(\delta_r(z,t)) $ for a function $ f $ on $ \He^n $ is given by
$$ \widehat{(\delta_r f)}(\lambda) = r^{-2(n+1)} d_r \circ \widehat{f}\left( \lambda / r^2 \right) \circ d_r^{-1} $$
for every $ \lambda \in \R^* $ and $ r >0. $  In view of this the analogue of (\ref{dil-R}) takes the form
$$ \widehat{(T_m\delta_rf)}( \lambda) = r^{-2(n+1)}  m(\lambda) d_r  \circ \widehat{f}\left( \lambda / r^2 \right) \circ d_r^{-1}.$$
From this we see that $ T_m $ commutes with $ \delta_r $ if and only if we have the relation
$$ d_r \circ m\left( \lambda / r^2 \right) \circ d_r^{-1} = m(\lambda).$$
Thus we see that $ m(\lambda) $ is completely determined by $ m(1)$ and $ m(-1).$  Indeed, 
$$  m(\lambda)  = d_{\sqrt{|\lambda|}} \circ  m\left( \lambda / |\lambda| \right) \circ d_{\sqrt{|\lambda|}}^{-1} .$$
We may say that Fourier multiplier $ m $ is homogeneous of degree zero if it satisfies the above relation.\\

The most important Fourier multipliers which are homogeneous of degree zero are provided by the Riesz transforms $ \tilde{R}_j =  \tilde{X}_j  \tilde{\mathcal{L}}^{-1/2},\, \tilde{R}_{j+n} =  \tilde{Y}_j \tilde{\mathcal{L}}^{-1/2}$ and their higher order analogues. The associated multipliers are given by $ i\lambda \xi_j H(\lambda)^{-1/2} $ and $ \frac{\partial}{\partial \xi_j} H(\lambda)^{-1/2} $ where $ H(\lambda) = -\Delta+\lambda^2 |\xi|^2 ,$ see \cite{thangavelu}. In this definition, the fractional powers $\tilde{\mathcal{L}}^{-1/2} $ are defined in terms of the heat kernel, and it is not difficult to check that they are homogeneous of degree $-1.$ Since $\tilde{X}_j $ are homogeneous of degree one, it follows that $  \tilde{R}_j $ commutes with the dilations. We can also verify this at the level of multipliers. Indeed, if $ P_k(\lambda) $ are the projections associated to $ H(\lambda),$ then we have
$$ P_k(\lambda)  = d_{\sqrt{|\lambda|}} \circ  P_k \circ d_{\sqrt{|\lambda|}}^{-1}, \quad H(\lambda)^{-1/2}  = |\lambda|^{-1/2} d_{\sqrt{|\lambda|}} \circ H^{-1/2} \circ d_{\sqrt{|\lambda|}}^{-1}  $$
where $ P_k = P_k(1) $ and similar relations (with $ |\lambda|^{-1/2} $ replaced with $ \lambda^{1/2} $) hold for the operators $ i\lambda \xi_j $ and $ \frac{\partial}{\partial \xi_j}$ proving our claim.
It is easy to produce other examples of multipliers that are homogeneous of degree zero. For any bounded sequence  $ c_k $  the operators 
$$  m(\lambda)  = \sum_{k=0}^\infty  c_k P_k(\lambda) $$ are uniformly bounded, homogeneous of degree zero and define Fourier multiplier operator $ T_m $ which are bounded on $ L^2(\He^n).$\\

Our definition of homogeneity has the following consequence on the pseudo-measure $ S $ associated to $ T_m.$ For any $ r > 0 ,$ we define $ \delta_r S $ by the relation
$ \langle \delta_r S,\tilde{\varphi}\rangle =  r^{-Q} \langle S, \delta_r^{-1}\tilde{\varphi}\rangle $ for any $\tilde{\varphi} \in A(\He^n).$ Then $ T_m $ commutes with $ \delta_r $ if and only if  $ S $ is homogeneous of degree $ -Q $ in the sense that $ \delta_rS = r^{-Q} S$ which reduces to $ \langle S, \tilde{\varphi} \rangle = \langle S, \delta_r^{-1}\tilde{\varphi} \rangle .$


\subsection{The special Hermite operator and Laguerre-Sobolev spaces} \label{subsec-Laguerre-Sob} 
The action of the sublaplacian $ \mathcal{L} $ on functions of the form $ f(z,t) = e^{i\lambda t} F(z) $ gives rise  to the following family of operators $L(\lambda)$ defined by $ \mathcal{L}f(z,t) = e^{i\lambda t} L(\lambda) F(z).$ More explicitly,
$$L(\lambda) = - \Delta_{\C^n} + \frac{1}{4} \lambda^2 |z|^2 + i \lambda \sum_{j=1}^n\left(x_j \frac{\partial}{\partial y_j} - y_j \frac{\partial}{\partial x_j}\right).$$
These operators are called special Hermite operators and their spectral theory is known explicitly, see \cite{thangavelu}. By introducing the complex vector fields $ \mathcal{Z}_j$ and $ \overline{\mathcal{Z}}_j$ defined by  $ \mathcal{Z}_j  = \frac{1}{2} ( X_j - i Y_j)  $ and $ \overline{\mathcal{Z}}_j = \frac{1}{2} ( X_j + i Y_j) $ we get the vector fields $ Z_j(\lambda) $ and $ \overline{Z}_j (\lambda) $ defined by 
$ \mathcal{Z}_j f(z,t) = e^{i\lambda t}  Z_j(\lambda) F(z)$ and $ \overline{\mathcal{Z}}_j f(z,t) = e^{i\lambda t} \overline{Z}_j (\lambda)F(z)$ and we see that 
\begin{equation} \label{TL}
L(\lambda) = - 4 \sum_{j=1}^{n} \left(Z_{j}(\lambda)\overline{Z}_{j}(\lambda) + \overline{Z}_{j}(\lambda)Z_{j}(\lambda)\right).
\end{equation}
We also observe that 
the differential operators $Z_{j}(\lambda)$ and  $\overline{Z}_{j}(\lambda)$, $j = 1, \ldots, n$ are given by 
\begin{equation} Z_{j}(\lambda)=\frac{\partial}{\partial z_{j}} - \frac{\lambda}{4} \overline{z}_{j}, \quad  \overline{Z}_{j}(\lambda) = \frac{\partial}{\partial \overline{z}_{j}} + \frac{\lambda}{4} z_{j}
\end{equation}
where $ \frac{\partial}{\partial z_j} $ and $ \frac{\partial}{\partial \bar{z}_j} $ have the usual meaning. Along with these we also need their right invariant counter parts
\begin{align} \label{scaled-right-fields}
Z_j^R(\lambda) =\frac{\partial}{\partial z_{j}} + \frac{\lambda}{4} \overline{z}_{j}, \quad {\overline{Z}_j^R} (\lambda) = \frac{\partial}{\partial \overline{z}_{j}} - \frac{\lambda}{4}z_{j}, \quad j = 1, \ldots, n. 
\end{align}
We observe that $ Z_j^R(\lambda) = Z_j(-\lambda) $ and $ \overline{Z}_j^R(\lambda) = \overline{Z}_j(-\lambda).$
The operators  $Z_{j}(\lambda), \overline{Z}_{j}(\lambda)$ are connected to $A_j(\lambda)= \frac{\partial}{\partial x_j}+\lambda x_j $ and $ {A}_j(\lambda)^* = - \frac{\partial}{\partial x_j}+\lambda x_j $ via the Weyl transform in the sense that for $ \lambda > 0$
$$ W_{\lambda}(Z_{j}(\lambda)f) = \frac{i}{2} W_{\lambda}(f)A_{j}(\lambda)^*, \quad W_{\lambda}(\overline{Z}_{j}(\lambda)f) = \frac{i}{2} W_{\lambda}(f){A}_{j}(\lambda),$$
and therefore,
\begin{equation}
W_{\lambda}(L(\lambda)f) = 4 W_{\lambda}(f)H(\lambda). 
\end{equation}
For $ \lambda > 0 $ we also have  
$$W_{\lambda}(Z_{j}(-\lambda)f) = \frac{i}{2} A_{j}(\lambda)^* W_\lambda(f), \quad W_{\lambda}(\overline{Z}_{j}(-\lambda)f) = \frac{i}{2} {A}_{j}(\lambda) W_\lambda(f).$$When $ \lambda = 1 $ we suppress the parameter and simply write $ W, L $ and $ H$ instead of $ W(1), L(1) $ and $ H(1).$ The same convention will be followed with other notations as well. \\

The operator $ L(\lambda) $ has discrete spectrum and has a very explicit spectral decomposition. For all the results stated in this section without proof we refer to the monograph \cite{thangavelu}. We let
$$ \varphi_k^{n-1}(z) = L_k^{n-1} \left( \frac{1}{2} |z|^2 \right) e^{-\frac{1}{4} |z|^2} $$ stand for Laguerre functions of type $(n-1).$ Here $ L_k^\alpha(t), \alpha > -1 $ are Laguerre polynomials of type $ \alpha.$ For each $ \lambda \in \R^\ast $ we define $ \varphi_{k,\lambda}^{n-1}(z) = \varphi_{k}^{n-1}(\sqrt{|\lambda|}z).$ 
Then for any $ f \in L^2(\C^n) $ the twisted convolution $ (2\pi)^{-n} |\lambda|^n f \ast_\lambda \varphi_{k,\lambda}^{n-1} $ gives the spectral projection of $ L(\lambda) $ onto the eigenspace corresponding to the eigenvalue $ (2k+n)|\lambda| $ and we have
$$ f(z)  = (2\pi)^{-n} |\lambda|^n \sum_{k=0}^\infty  f \ast_\lambda \varphi_{k,\lambda}^{n-1}(z).$$
The heat kernel associated to $ L(\lambda) $ is explicitly known and given by
$$ p_t^\lambda(z) = (4\pi)^{-n} \left( \frac{\lambda}{\sinh(t\lambda)}\right) ^n  e^{-\frac{\lambda}{4}  (\coth(t\lambda)) |z|^2}.$$ 
The function $ u(z,t) = f \ast_\lambda p_t^\lambda(z) $ solves the heat equation associated to $ L(\lambda) $ with initial condition $ f.$ The fractional powers $ L(\lambda)^{-s}, \, s > 0 $ are expressible in terms of  the heat semigroup $ e^{-tL(\lambda)}.$ Thus,
$$ L(\lambda)^{-s} f(z) = \frac{1}{\Gamma(s)} \int_0^\infty  t^{s-1} e^{-tL(\lambda)}f(z) \, dt.$$
As $ e^{-tL(\lambda)}f(z) = f \ast_\lambda p_t^\lambda(z) $ it follows that $ L(\lambda)^{-s}f(z) = f \ast_\lambda K^s_{\lambda} (z) $ where
$$ K^s_{\lambda}(z) = \frac{1}{\Gamma(s)} \int_0^\infty  t^{s-1} p_t^\lambda(z) \, dt.$$ 
More generally, the kernel $ K^s_{\lambda,d} $ of $ (L(\lambda) + d |\lambda|)^{-s} $ is given by
$$ K^s_{\lambda,d}(z) = \frac{1}{\Gamma(s)} \int_0^\infty  t^{s-1} e^{-d |\lambda|t} p_t^\lambda(z) \, dt.$$ 
We record the following result, which is easy to prove, for future use.

\begin{proposition} \label{prop-inverse-frac-twisted-lap}
The kernel $ K^s_{\lambda,d} $ is well defined for all $ s > 0$ and $ d+n > 0$ and belongs to $ L^1(\C^n).$ Consequently, $ (L(\lambda) + d|\lambda|)^{-s} $ is bounded on $ L^p(\C^n) $ for all $ 1 \leq p \leq \infty.$
\end{proposition}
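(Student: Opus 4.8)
The plan is to reduce everything to a single one-dimensional computation by exploiting the product structure of the heat kernel $p_t^\lambda$. Recall $p_t^\lambda(z) = (4\pi)^{-n}\bigl(\lambda/\sinh(t\lambda)\bigr)^n e^{-\frac{\lambda}{4}(\coth(t\lambda))|z|^2}$, so that
$$
K^s_{\lambda,d}(z) = \frac{(4\pi)^{-n}}{\Gamma(s)} \int_0^\infty t^{s-1} e^{-d|\lambda|t} \left(\frac{\lambda}{\sinh(t\lambda)}\right)^n e^{-\frac{\lambda}{4}(\coth(t\lambda))|z|^2}\, dt.
$$
Since every factor in the integrand is non-negative for $t>0$ (using $\lambda/\sinh(t\lambda)>0$ and $\coth(t\lambda)$ having the sign of $\lambda$, so that $\lambda\coth(t\lambda)>0$), we have $K^s_{\lambda,d}(z)\geq 0$ and therefore
$$
\|K^s_{\lambda,d}\|_{L^1(\C^n)} = \int_{\C^n} K^s_{\lambda,d}(z)\, dz = \frac{(4\pi)^{-n}}{\Gamma(s)} \int_0^\infty t^{s-1} e^{-d|\lambda|t} \left(\frac{\lambda}{\sinh(t\lambda)}\right)^n \left(\int_{\C^n} e^{-\frac{\lambda}{4}(\coth(t\lambda))|z|^2}\, dz\right) dt,
$$
by Tonelli's theorem. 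The inner Gaussian integral over $\C^n \cong \R^{2n}$ equals $\bigl(4\pi/(\lambda\coth(t\lambda))\bigr)^n$, which cancels the prefactor cleanly and leaves
$$
\|K^s_{\lambda,d}\|_{L^1(\C^n)} = \frac{1}{\Gamma(s)} \int_0^\infty t^{s-1} e^{-d|\lambda|t} \left(\tanh(t\lambda)\right)^n \frac{dt}{(t\lambda)^0}\cdot\text{(constant)} = \frac{1}{\Gamma(s)}\int_0^\infty t^{s-1} e^{-d|\lambda|t} \left(\frac{\tanh(t\lambda)}{t\lambda}\right)^n (t\lambda)^n\, \frac{dt}{(t\lambda)^n}\cdot\ldots
$$
— more directly, after the cancellation one is left with $\frac{1}{\Gamma(s)}\int_0^\infty t^{s-1} e^{-d|\lambda|t}\, dt$ times a bounded factor $\bigl(\tanh(t|\lambda|)/(t|\lambda|)\bigr)^n \cdot (t|\lambda|)^n / (t|\lambda|)^n$; I will organize the algebra so that the surviving integrand is $t^{s-1} e^{-d|\lambda|t}\bigl(\tanh(t\lambda)/(\lambda)\bigr)^n \cdot t^{-n}$, i.e. $\|K^s_{\lambda,d}\|_1 = \frac{1}{\Gamma(s)}\int_0^\infty t^{s-n-1} e^{-d|\lambda|t}\bigl(\tanh(t|\lambda|)/|\lambda|\bigr)^n\, dt$.

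Now I estimate this last integral. Near $t=0$, $\tanh(t|\lambda|)\sim t|\lambda|$, so $t^{s-n-1}(\tanh(t|\lambda|)/|\lambda|)^n \sim t^{s-1}$, which is integrable since $s>0$; the factor $e^{-d|\lambda|t}$ is bounded near $0$. Near $t=\infty$, $\tanh(t|\lambda|)\to 1$, so the integrand behaves like $t^{s-n-1} e^{-d|\lambda|t}$: if $d>0$ the exponential forces convergence for any $s$, while if $-n < d \leq 0$ we use $e^{-d|\lambda|t}\bigl(\tanh(t|\lambda|)\bigr)^n \leq e^{-d|\lambda|t}\bigl(\tanh(t|\lambda|)\bigr)^{n}$ and the sharper bound $\tanh(t|\lambda|) \leq 1$ combined with... — here the point is that the genuine decay comes from writing $e^{-d|\lambda|t}(\tanh(t|\lambda|))^n$ and using $\tanh u \le 1 - c e^{-2u}$ type estimates, or more simply by going back to the original form and noting $e^{-d|\lambda|t}\bigl(\tanh(t\lambda)\bigr)^n \le e^{-(d+n)|\lambda|t}\cdot\bigl(e^{t|\lambda|}\tanh(t|\lambda|)\bigr)^n/e^{nt|\lambda|}\cdot e^{nt|\lambda|}$; cleanly, since $\tanh u = \frac{1-e^{-2u}}{1+e^{-2u}} \le \frac{1}{1+e^{-2u}} \cdot 1$ we have no decay from $\tanh$ alone, so the decay at infinity must come from $d+n>0$ via the identity $\bigl(\lambda/\sinh(t\lambda)\bigr)^n \cdot \bigl(4\pi/(\lambda\coth(t\lambda))\bigr)^n = \bigl(4\pi/\cosh(t\lambda)\bigr)^n \cdot \bigl(1/\lambda^0\bigr)$, wait — I will instead track $\cosh$: the product of the two prefactors equals $(4\pi)^n (\sinh(t\lambda)\coth(t\lambda))^{-n}\lambda^n = (4\pi)^n(\cosh(t\lambda))^{-n}$. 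Hence the correct clean formula is
$$
\|K^s_{\lambda,d}\|_{L^1(\C^n)} = \frac{1}{\Gamma(s)}\int_0^\infty t^{s-1} e^{-d|\lambda|t} \bigl(\cosh(t\lambda)\bigr)^{-n}\, dt.
$$
Near $t=0$ the integrand is $\sim t^{s-1}$, integrable. Near $t=\infty$, $(\cosh(t\lambda))^{-n} \sim 2^n e^{-n|\lambda|t}$, so the integrand decays like $t^{s-1} e^{-(d+n)|\lambda|t}$, which is integrable precisely because $d+n>0$. This proves $K^s_{\lambda,d}\in L^1(\C^n)$ for all $s>0$ and $d+n>0$.

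For the consequence: since $L^p(\C^n)$ is twisted-translation invariant and $(L(\lambda)+d|\lambda|)^{-s}f = f\ast_\lambda K^s_{\lambda,d}$, Young's inequality for twisted convolution — which obeys the same $L^1 \ast L^p \to L^p$ bound as ordinary convolution because $|F\ast_\lambda G| \le |F|\ast|G|$ pointwise (the twisting factor has modulus $1$) — gives $\|(L(\lambda)+d|\lambda|)^{-s}f\|_p \le \|K^s_{\lambda,d}\|_1 \|f\|_p$ for all $1\le p\le\infty$. The main obstacle is purely bookkeeping: correctly carrying out the cancellation of the $\sinh$ and $\coth$ prefactors against the Gaussian normalization constant so that the clean $(\cosh(t\lambda))^{-n}$ survives, and then reading off that $d+n>0$ is exactly the condition needed for convergence at $t=\infty$; there is no analytic difficulty beyond this.
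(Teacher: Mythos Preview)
Your argument is correct and follows exactly the same route as the paper: integrate the heat kernel over $\C^n$ to obtain $\int_{\C^n} p_t^\lambda(z)\,dz = C_n(\cosh(t|\lambda|))^{-n}$, then observe that $t^{s-1}e^{-d|\lambda|t}(\cosh(t|\lambda|))^{-n}$ is integrable on $(0,\infty)$ precisely when $s>0$ and $d+n>0$. The only comment is presentational: the several false starts before you reach the clean identity with $(\cosh(t\lambda))^{-n}$ should be excised --- the paper simply states that identity and bounds $(\cosh(t|\lambda|))^{-n}\le e^{-n|\lambda|t}$, which is all that is needed.
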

\begin{proof} The proof follows from the fact that
$$ \int_{\C^n} p_t^\lambda(z) \, dz = C_n (\cosh(t|\lambda|)^{-n} \leq C_n e^{- n |\lambda| t} $$
so that $ \int_0^\infty  t^{s-1} e^{-(d+n)|\lambda| t} \, dt <\infty$ under the assumptions on $ s $ and $ d.$
\end{proof}

We now define the Laguerre- Sobolev spaces $ W_{L(\lambda)}^{N,p}(\C^n) $ for any non-negative integer $ N $ and $ 1 \leq p \leq \infty.$ (By our convention $ W_L^{N,p}(\C^n) $ will stand for $ W_{L(1)}^{N,p}(\C^n) .$)

\begin{definition}[Laguerre Sobolev Spaces] \label{Lag-Sob-Sp}
Let $1 \leq p \leq \infty$, $N \in \mathbb{N} \setminus \{0\}$ and $\lambda \in \R^\ast$. The Laguerre Sobolev space $W^{N,p}_{L(\lambda)}(\mathbb{C}^{n})$ is defined by 
\begin{align*} 
W^{N,p}_{L(\lambda)}(\mathbb{C}^{n}) = \left\lbrace f \in L^p(\mathbb{C}^{n}) : Z(\lambda)^\alpha \overline{Z}(\lambda)^\beta f \in L^p(\mathbb{C}^{n}), 0 \leq |\alpha| + |\beta| \leq N\right\rbrace,
\end{align*}
where for multi-indices $ \alpha, \beta \in \N^n, $ we have set $ \,Z(\lambda)^\alpha = \Pi_{j=1}^n Z_j(\lambda)^{\alpha_j} $ and $\overline{Z}(\lambda)^\beta  = \Pi_{j=1}^n\overline{Z}_j(\lambda)^{\beta_j}.$ 
\end{definition} 

It is easy to see  that  $W^{N,p}_{L(\lambda)}(\mathbb{C}^{n})$ is a Banach space with respect to the norm 
\begin{align*} 
\left\| f\right\|_{W^{N,p}_{L(\lambda)}} := \sum_{|\alpha| + |\beta| \leq N} \left\| Z(\lambda)^\alpha \overline{Z}(\lambda)^\beta f\right\|_p.
\end{align*}
Similarly, one defines the homogeneous Laguerre Sobolev space $\mathring{W}^{N,p}_{L(\lambda)}(\mathbb{C}^{n})$ to be the space of all tempered distributions $f$ such that $Z(\lambda)^\alpha \overline{Z}(\lambda)^\beta f \in L^p(\mathbb{C}^{n})$, for $|\alpha| + |\beta| = N$, equipped with the seminorm 
\begin{align*} 
\left\| f\right\|_{\mathring{W}^{N,p}_{L(\lambda)}} := \sum_{|\alpha| + |\beta| = N} \left\| Z(\lambda)^\alpha \overline{Z}(\lambda)^\beta f\right\|_p. 
\end{align*}
It turns out that for $1 \leq p < \infty$, the two spaces ${W}^{N,p}_{L(\lambda)}(\mathbb{C}^{n})$ and $\mathring{W}^{N,p}_{L(\lambda)}(\mathbb{C}^{n})$ coincide with the norm $\left\| \cdot\right\|_{W^{N,p}_{L(\lambda)}(\mathbb{C}^{n})}$ being equivalent to $\left\| \cdot\right\|_{\mathring{W}^{N,p}_{L(\lambda)}(\mathbb{C}^{n})}$.  This is due to the fact that the spectrum of $ L(\lambda) $ is discrete and does not contain $ 0.$ 

\begin{theorem} \label{thm-Weyl-norm-equiv}
Let $ \lambda \in \R^* $ and $ N \in \N.$ Then for any $ 1 \leq p < \infty$ we have ${W}^{N,p}_{L(\lambda)}(\mathbb{C}^{n})$ and $\mathring{W}^{N,p}_{L(\lambda)}(\mathbb{C}^{n})$ coincide with the norm $\left\| \cdot\right\|_{W^{N,p}_{L(\lambda)}(\mathbb{C}^{n})}$ being equivalent to $\left\| \cdot\right\|_{\mathring{W}^{N,p}_{L(\lambda)}(\mathbb{C}^{n})}$.
\end{theorem}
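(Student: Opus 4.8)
The plan is to exploit the fact that $0$ lies in the resolvent set of $L(\lambda)$: its spectrum is the discrete set $\{(2k+n)|\lambda| : k \in \N\}$, bounded below by $n|\lambda| > 0$. The one inclusion is trivial: since $\mathring{W}^{N,p}_{L(\lambda)}$ only imposes conditions on the top-order derivatives $Z(\lambda)^\alpha\overline{Z}(\lambda)^\beta f$ with $|\alpha|+|\beta| = N$, we have $\|f\|_{\mathring{W}^{N,p}_{L(\lambda)}} \leq \|f\|_{W^{N,p}_{L(\lambda)}}$ and $W^{N,p}_{L(\lambda)} \subset \mathring{W}^{N,p}_{L(\lambda)}$ as soon as we know an element of the latter lies in $L^p$. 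So the real content is: (i) control the lower-order derivatives (and $f$ itself) by the top-order ones, and (ii) show that a tempered distribution in $\mathring{W}^{N,p}_{L(\lambda)}$ actually belongs to $L^p(\C^n)$. By the dilation relation $Z_j(\lambda)^\alpha\overline{Z}_j(\lambda)^\beta$ scales in a controlled way (cf. the $\lambda = 1$ conventions in subsection \ref{subsec-Laguerre-Sob}), it suffices to treat $\lambda = 1$; I would state this reduction first.

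The key algebraic input is that $L = L(1)$ is, up to constants, the sum of products $Z_j\overline{Z}_j + \overline{Z}_j Z_j$ as in \eqref{TL}, so $L^{-N/2}$ (or better, fractional powers of $L + d$) composed with $N$-th order differential operators in the $Z_j, \overline{Z}_j$ produce bounded operators on $L^p$. More precisely, I would argue as follows. Given any lower-order monomial $Z(1)^{\alpha'}\overline{Z}(1)^{\beta'}$ with $|\alpha'|+|\beta'| = M < N$, write it as $\big(Z(1)^{\alpha'}\overline{Z}(1)^{\beta'} L^{-N/2}\big)\big(L^{N/2} f\big)$; the operator in the first parenthesis is a singular-integral-type operator whose $L^p$ boundedness for $1 \leq p \leq \infty$ follows because $L^{-(N-M)/2}$ has an integrable twisted-convolution kernel (Proposition \ref{prop-inverse-frac-twisted-lap} with $d = 0$ is the relevant statement, noting $n > 0$) and the remaining $M$-th order derivatives applied to $L^{-M/2}$ are bounded — these are twisted Riesz transforms, but for the $L^1$ and $L^\infty$ endpoints one can alternatively avoid them by grouping differently, e.g. bounding $\|Z(1)^{\alpha'}\overline{Z}(1)^{\beta'} f\|_p$ directly against $\|(L+d)^{N/2}f\|_p$ via the integral kernel of $(L+d)^{-N/2}$ and noting that each application of a first-order field to $p_t^1$ costs an integrable factor. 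Summing over all $M \leq N$ (including $M = 0$, which controls $\|f\|_p$ itself) gives $\|f\|_{W^{N,p}_{L}} \lesssim \sum_{|\alpha|+|\beta| = N}\|Z(1)^\alpha \overline{Z}(1)^\beta f\|_p + \|f\|_p$, and a further argument removes the stray $\|f\|_p$: since $L$ has a bounded inverse on $L^2$ and its kernel is integrable, $\|f\|_p \lesssim \|L^{N/2}f\|_p$ as well. This simultaneously shows $f \in L^p$, settling point (ii), provided one first checks that the identity $f = (L+d)^{-N/2}(L+d)^{N/2}f$ makes sense for tempered distributions $f$ with top-order derivatives in $L^p$ — which follows by testing against Schwartz functions and using that $(L+d)^{N/2}f$ is the $L^p$ function obtained by expanding $(L+d)^{N/2}$ into the monomials $Z(1)^\alpha\overline{Z}(1)^\beta$ plus lower order, the lower order terms being handled by induction on $N$.

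The main obstacle I expect is precisely this last point — reconciling the \emph{a priori} distributional definition of $\mathring{W}^{N,p}_{L(\lambda)}$ with membership in $L^p$ — because one cannot blithely apply $(L+d)^{-N/2}$ to a general tempered distribution and recover $f$; there could be a polynomial (or, in the twisted setting, a harmonic-type) ambiguity. The resolution is that $0 \notin \operatorname{spec}(L)$ forces any such ambiguity to vanish: concretely, if $g := (L+d)^{N/2}f \in L^p \cap \mathcal{S}'$ and $h := (L+d)^{-N/2}g \in L^p$, then $(L+d)^{N/2}(f - h) = 0$ in $\mathcal{S}'$, and since $(L+d)$ is injective on $\mathcal{S}'$ (its spectrum avoids $-d|\lambda|$ for $d+n>0$, and more to the point it is injective on polynomials because the special Hermite operator has no polynomial null vectors for $d \geq 0$), we get $f = h \in L^p$. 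I would carry out the induction on $N$ carefully here, since the base case $N = 1$ already contains the whole mechanism, and the inductive step just peels off one power of $L$. Once $f \in L^p$ is established, the norm equivalence is the routine summation described above, and the two spaces coincide as sets with equivalent norms, completing the proof.
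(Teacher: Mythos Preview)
Your overall strategy---exploit that $0\notin\operatorname{spec}L(\lambda)$ so that $L(\lambda)^{-1}$ has an integrable twisted-convolution kernel---is exactly the paper's, and for $1<p<\infty$ your invocation of Riesz-transform bounds together with Proposition~\ref{prop-inverse-frac-twisted-lap} is what the paper does.

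For $p=1$ there is a genuine gap. Unwinding your scheme at $N=1$ one writes $f=L^{-1}Lf=-4\sum_j\big(L^{-1}Z_j\big)(\overline{Z}_jf)+\big(L^{-1}\overline{Z}_j\big)(Z_jf)$, so the operator that must be bounded on $L^1$ is $L^{-1}Z_j$, not $Z_jL^{-1}$. Your heuristic ``each application of a first-order field to $p_t^1$ costs an integrable factor'' yields only the latter: since $Z_j(\lambda)(f\ast_\lambda K_\lambda)=f\ast_\lambda(Z_j(\lambda)K_\lambda)$, the kernel of $Z_jL^{-1}$ is $Z_j K_\lambda\in L^1$, but $L^{-1}Z_j$ is a different operator because $Z_j(\lambda)$ does not commute with $L(\lambda)$. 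The paper supplies the missing step via a commutator identity: writing $Z_j(\lambda)=Z_j(-\lambda)-\tfrac{\lambda}{2}\overline{z}_j$ and using that $L(\lambda)$ \emph{does} commute with the right-invariant field $Z_j(-\lambda)$, one gets
\[
L(\lambda)^{-1}Z_j(\lambda)=Z_j(\lambda)L(\lambda)^{-1}-\tfrac{\lambda}{2}\bigl[L(\lambda)^{-1},\overline{M}_j\bigr],
\]
and both pieces have integrable kernels, namely $Z_j(\lambda)K_\lambda$ and $\overline{z}_jK_\lambda$, whose integrability is read off from an explicit integral representation of $K_\lambda$. This gives the Poincar\'e inequality $\|f\|_1\lesssim\sum_j\bigl(\|Z_j(\lambda)f\|_1+\|\overline{Z}_j(\lambda)f\|_1\bigr)$ directly, and the general $N$ follows by iteration. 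Your route through $(L+d)^{N/2}$ also stumbles for odd $N$, since $(L+d)^{1/2}$ is not a first-order differential expression in the $Z_j,\overline{Z}_j$.

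On the other hand, your attention to the distributional issue---whether $f\in\mathring{W}^{N,p}_{L(\lambda)}$ as a tempered distribution is automatically an $L^p$ function, and your resolution via injectivity of $L+d$ on $\mathcal{S}'$---is more careful than the paper, which simply writes $f=L(\lambda)^{-1}L(\lambda)f$ without comment.
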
 

For $ 1 < p < \infty $ the result follows from Proposition \ref{prop-inverse-frac-twisted-lap} together with the fact that the Riesz transforms $ Z_j(\lambda)L(\lambda)^{-1/2}$ and  $\overline{Z}_j (\lambda)L(\lambda)^{-1/2} $ and their higher order analogues are bounded on $ L^p(\C^n).$ For $ p =1 $ it follows from the Poincare type inequality stated in the following proposition.

\begin{proposition} For any $ \lambda \in \R^\ast $ we have the inequality
$$  \| f\|_1 \leq C_\lambda  \sum_{j=1}^n \left(  \| Z_j(\lambda)f\|_1+ \| \overline{Z}_j (\lambda)f\|_1 \right)  $$
for all $ f \in \mathring{W}^{1,1}_{L(\lambda)}(\mathbb{C}^{n})$. 
\end{proposition}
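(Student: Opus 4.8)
The plan is to recover $f$ from its first-order derivatives by inverting the special Hermite operator $L(\lambda)$, whose spectrum $\{(2k+n)|\lambda|:k\in\N\}$ is bounded away from $0$. By Proposition \ref{prop-inverse-frac-twisted-lap} (with $s=1$, $d=0$, so that $d+n=n>0$), the inverse $L(\lambda)^{-1}$ is realised as twisted convolution $g\mapsto g\ast_\lambda K^1_\lambda$ with the radial, integrable kernel $K^1_\lambda=\int_0^\infty p_t^\lambda\,dt\in L^1(\C^n)$; a standard computation with the heat integral gives $L(\lambda)K^1_\lambda=\delta_0$, so (using radiality to compare $L(\lambda)$ with its right-invariant version) $L(\lambda)^{-1}$ is a two-sided inverse of $L(\lambda)$ on $\mathcal{S}^\prime(\C^n)$. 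Hence, for any $f\in\mathring{W}^{1,1}_{L(\lambda)}(\C^n)$ one has $f=\big(L(\lambda)f\big)\ast_\lambda K^1_\lambda$ in $\mathcal{S}^\prime$, and by \eqref{TL} the right-hand side is a finite linear combination of the distributions $\big(Z_j(\lambda)\overline{Z}_j(\lambda)f\big)\ast_\lambda K^1_\lambda$ and $\big(\overline{Z}_j(\lambda)Z_j(\lambda)f\big)\ast_\lambda K^1_\lambda$.

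Next I would transfer exactly one of the two first-order fields off $f$ and onto the kernel. Using the twisted-convolution analogue of the group identity $(\mathcal{Z}_jF)\ast G=F\ast(\widetilde{\mathcal{Z}}_jG)$ (a left-invariant field acting on the first factor becomes the right-invariant field acting on the second), together with $Z_j^R(\lambda)=Z_j(-\lambda)$ and $\overline{Z}_j^R(\lambda)=\overline{Z}_j(-\lambda)$ from \eqref{scaled-right-fields}, this turns the previous sum into terms
\[
  \big(\overline{Z}_j(\lambda)f\big)\ast_\lambda\big(Z_j(-\lambda)K^1_\lambda\big)
  \quad\text{and}\quad
  \big(Z_j(\lambda)f\big)\ast_\lambda\big(\overline{Z}_j(-\lambda)K^1_\lambda\big),
\]
each of which is an honest $L^1\ast_\lambda L^1$ convolution once the kernels are known to be integrable. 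Young's inequality $\|F\ast_\lambda G\|_1\le\|F\|_1\|G\|_1$ then yields the asserted estimate with $C_\lambda$ a multiple of $\max_j\big(\|Z_j(-\lambda)K^1_\lambda\|_1,\|\overline{Z}_j(-\lambda)K^1_\lambda\|_1\big)$, and in particular $f\in L^1(\C^n)$; no separate density argument is needed, since the whole identity can be verified at the level of tempered distributions.

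The heart of the matter is the claim $Z_j(-\lambda)K^1_\lambda,\ \overline{Z}_j(-\lambda)K^1_\lambda\in L^1(\C^n)$, and this I expect to be the main obstacle. One differentiates under the integral $K^1_\lambda=\int_0^\infty p_t^\lambda\,dt$ using the explicit Gaussian form of $p_t^\lambda$: a direct computation expresses $Z_j(-\lambda)p_t^\lambda$ and $\overline{Z}_j(-\lambda)p_t^\lambda$ as an explicit scalar function of $t\lambda$ (built from $\sinh$, $\cosh$ and $\coth$) times $z_j\,p_t^\lambda$ or $\overline{z}_j\,p_t^\lambda$, so that $\|Z_j(-\lambda)p_t^\lambda\|_1$ and $\|\overline{Z}_j(-\lambda)p_t^\lambda\|_1$ can be computed in closed form; they behave like $t^{-1/2}$ as $t\to0^+$ and decay exponentially as $t\to\infty$ (the decay controlled by the factor $(\lambda/\sinh t\lambda)^n$, exactly as in the proof of Proposition \ref{prop-inverse-frac-twisted-lap}). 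Integrating in $t$ then gives the claim. The borderline $t^{-1/2}$ singularity at $t=0$ is precisely why the decomposition must leave one derivative on $f$ and place only one on $K^1_\lambda$: differentiating the heat kernel twice would produce a non-integrable $t^{-1}$ factor — this is the same reason the first-order Riesz transforms $Z_j(\lambda)L(\lambda)^{-1/2}$ fail to be bounded on $L^1(\C^n)$.

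Finally, the case $\lambda<0$ is handled identically, with the roles of $Z_j(\lambda)$ and $\overline{Z}_j(\lambda)$ interchanged; alternatively it follows from the case $\lambda>0$ by complex conjugation, which is an isometry of $L^1(\C^n)$ intertwining $L(\lambda)$ with $L(-\lambda)$ and $Z_j(\lambda)$ with $\overline{Z}_j(-\lambda)$.
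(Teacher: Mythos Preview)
Your proposal is correct and follows essentially the same strategy as the paper: write $f=L(\lambda)^{-1}L(\lambda)f$ using \eqref{TL}, move one first-order field from $f$ onto the kernel $K_\lambda$, and check that the resulting kernel is integrable. The differences are only in execution. Where you invoke the integration-by-parts identity $(Z_j(\lambda)h)\ast_\lambda K_\lambda = h\ast_\lambda(Z_j(-\lambda)K_\lambda)$ directly, the paper reaches the same endpoint via the commutator relation $L(\lambda)^{-1}Z_j(\lambda)=Z_j(\lambda)L(\lambda)^{-1}-\tfrac{\lambda}{2}[L(\lambda)^{-1},\overline{M}_j]$; since $Z_j(-\lambda)=Z_j(\lambda)+\tfrac{\lambda}{2}\bar z_j$, your single kernel $Z_j(-\lambda)K_\lambda$ is exactly the sum of the paper's two kernels $Z_j(\lambda)K_\lambda$ and $\bar z_j K_\lambda$. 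For the integrability step, the paper quotes an explicit integral representation of $K_\lambda$ from \cite{ARS} to read off pointwise asymptotics, whereas you differentiate under the heat integral and bound $\int_0^\infty\|Z_j(-\lambda)p_t^\lambda\|_1\,dt$ via the closed-form Gaussian computation; your route is more self-contained and avoids the external reference, while the paper's gives slightly finer information about $K_\lambda$ itself.
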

\begin{proof}  In view of (\ref{TL}), writing $ f = L(\lambda)^{-1} L(\lambda)f $, it is enough to prove that $ L(\lambda)^{-1} Z_j(\lambda) $ and $ L(\lambda)^{-1} \overline{Z}_j (\lambda) $ are bounded on $ L^1(\C^n).$  From the definition, we can check that $ L(\lambda) $ commutes with $ Z_j(-\lambda) $ and $ \overline{Z}_j (-\lambda) $ for all $ j .$  Let us define $ M_j f(z) = z_j f(z) $ and $ \overline{M}_j f(z) = \overline{z}_j f(z).$ As $ Z_j(\lambda) = Z_j(-\lambda)-\frac{\lambda}{2}\overline{z}_j, $ using the fact that $ L(\lambda) ^{-1} $ also commutes with $ Z_j(-\lambda)$ we have
\begin{equation}\label{comm}
 L(\lambda)^{-1} Z_j(\lambda) = Z_j(-\lambda) L(\lambda)^{-1} -\frac{\lambda}{2} L(\lambda)^{-1} \overline{M}_j =  Z_j(\lambda) L(\lambda)^{-1} - \frac{\lambda}{2} [L(\lambda)^{-1},
 \overline{M}_j] 
 \end{equation}
where  for any two operators $ [T,S] = TS-ST $ stands for their commutator. Expressing the operator $ L(\lambda) $ in terms of the semigroup $ e^{-tL(\lambda)} $ we see that
$$ L(\lambda)^{-1} f (z) = \int_0^\infty  f \ast_\lambda p_t^\lambda(z) \, dt $$ where $ p_t^\lambda(z) $ is the kernel of $ e^{-tL(\lambda)}.$ The kernel of $ L(\lambda)^{-1} $ is given by 
\begin{equation} 
\label{fund} K_\lambda(z) = \int_0^\infty p_t^\lambda(z) \, dt 
\end{equation} 
which can be calculated explicitly. The above means that
$$  L(\lambda)^{-1} f(z) =  \int_{\C^n} f(z-w) K_\lambda(w) e^{\frac{i}{2} \lambda \Im(z \cdot w)} dw.$$ Observe that the kernel of $Z_j(\lambda) L(\lambda)^{-1}$ is given by $Z_j(\lambda) K_\lambda(z) $ and that of $ [L(\lambda)^{-1},\overline{M}_j] $ is given by $ \overline{z}_j K_\lambda(z).$ Once we show that these kernels are in $ L^1(\C^n) $ it follows that $ L(\lambda)^{-1}Z_j(\lambda) $ are bounded on $ L^1(\C^n).$  Since similar analysis works for $ L(\lambda)^{-1} \overline{Z}_j (\lambda) $ as well the proposition gets proved.

Though the kernel $ K_\lambda(z) $ can be evaluated in terms of a Macdonald function, for our purpose it is enough to use the following integral representation
$$ K_\lambda(z)= c_{n,\lambda}  \, e^{-\frac{\lambda}{4}|z|^2}  \int_0^\infty  (s(s+2))^{n/2-1} e^{-\frac{\lambda}{4} s|z|^2} \, ds$$ 
which can be obtained from (\ref{fund}) by clever change of variables, see Theorem 3.1 in Adimurthi et al \cite{ARS}. From the above representation it is not difficult to get the asymptotics of $ K_\lambda(z) $ for $ |z| $ small as well as for $ |z|$ large. We refer to Propositions $4.3$ and $4.4$ of \cite{ARS}. Our claims follow immediately from the estimates proved there.
\end{proof}


\subsection{Spherical harmonics and their operator analogues} \label{subsec-sph-harmonics}
Let $K=U(n)$ be the group of all $n\times n$ unitary matrices which acts on any function space on the unit sphere $S^{2n-1}$. Let $K_{e_{1}}$ be the subgroup of $K$ that fixes the coordinate vector $e_{1}=(1,0,\ldots,0)$. Since $K$ acts transitively on $S^{2n-1}$, we can identify $S^{2n-1}$ with $K/K_{e_{1}}$ via the map $\omega \rightarrow u K_{e_{1}} $ if $ \omega = u\cdot e_{1}$. The natural representation of $K$ on $L^2(S^{2n-1})$ can be decomposed in terms of irreducible unitary representations having $K_{e_{1}}$-fixed vectors known as class one representations. For each pair $(a,b) \in \mathbb{N}^2$, let $\mathcal{P}_{a,b}$ be the set of all polynomials on $\mathbb{C}^n$ which are of the form 
$$P(z) = \sum_{|\alpha|=a}\sum_{|\beta|=b}a_{\alpha\beta}z^{\alpha}\overline{z}^{\beta}.$$ 
Each $P \in \mathcal{P}_{a,b}$ satisfies the homogeneity condition $P(\lambda z) = \lambda^{a} \bar{\lambda}^{b} P(z)$ for all $\lambda \in \mathbb{C} \setminus \{0\}$. Let $\Delta = 4 \sum_{j=1}^{n}\frac{\partial^{2}}{\partial z_{j}\partial\overline{z}_{j}}$ be the Laplacian on $\mathbb{C}^{n}$. We write $\mathcal{H}_{a,b} := \left\lbrace P\in\mathcal{P}_{a,b}: \Delta P=0 \right\rbrace$. The elements of $\mathcal{H}_{a,b}$ are called  bigraded solid harmonics. It is known that $\delta=\delta_{a,b}$ defined on $U(n)$ (and acting on $\mathcal{H}_{a,b}$) by $\delta(\sigma)P(z) = P(\sigma^{-1} z)$ are irreducible unitary representations and exhaust all class one irreducible unitary representations (upto unitary equivalence). We denote this class of representations by $\widehat{K_0}$. 
For each $\delta=\delta_{a,b},$ we let $d(\delta)$ denote the dimension of $\mathcal{H}_{a,b}$ and let $\chi_\delta$ be the character associated to $\delta$. \\

We make  $\mathcal{H}_{a,b}$  into a Hilbert space by equipping it with the inner product: 
$$(f,g)_{\mathcal{H}_{a,b}} = \frac{2^{-(n+a+b-1)}}{\Gamma(n+a+b)} \int_{\mathbb{C}^n} f(z) \overline{g(z)} e^{-\frac{1}{2} |z|^2} \, dz.$$ 
We fix an orthonormal basis $\{P^{\delta}_{j}$ : $1\leq j\leq d(\delta)\}$ for  $\mathcal{H}_{a,b}.$  Then by defining the spherical harmonics $Y_j^\delta$ by the relation $ P_j^\delta(z) = |z|^{a+b} Y_j^\delta(\omega),$  for $z= |z|\omega$, the collection
$$\left\lbrace Y^{\delta}_{j}  : \delta \in \widehat{K_0}, 1\leq j\leq d(\delta) \right\rbrace $$ 
becomes an orthonormal basis for $L^{2}(S^{2n-1})$. Given a measurable function $ f $ on $ \C^n $ which  has a well defined restriction on every sphere $S_R = \{ z: |z| =R \}$, we have the spherical harmonic expansion 
\begin{equation} \label{harmonic-exp-1} 
f(R\omega) = \sum_\delta \sum_{j=1}^{d(\delta)} (f_R,Y_j^\delta)_{L^2(S^{2n-1})}  Y_j^\delta(\omega),
\end{equation}
where $ f_R(\omega) := f(R\omega)$ for $\omega \in S^{2n-1}.$ 
If $ z = R \omega, $ we can rewrite the above expansion in the following form. For each $R > 0$, let $ \sigma_R $ be the normalised surface measure on the sphere $S_R = \{ z: |z|=R \}$ defined by 
$$\int_{S_R} f(z) d\sigma_R = \int_{S^{2n-1}} f(R\omega) d\sigma.$$
Denoting the inner product in $ L^2(S_R,d\sigma_R) $ by $ (f,g)_R$, we can rewrite (\ref{harmonic-exp-1}) as
\begin{equation} \label{harmonic-2}
f(z) = \sum_\delta \sum_{j=1}^{d(\delta)} R^{-(a+b)} (f,P_j^\delta)_R   \,\, R^{-(a+b)} P_j^\delta(z), \quad \textup{for } z \in S_R. 
\end{equation} 

There is an operator analogue of the above mentioned spherical harmonics which we briefly recall below. For details, we refer to \cite{Geller}, \cite{Thangavelu1}, and Section 2.7 of \cite{thangavelu}. For each $k \in \mathbb{N}$, consider the the following sub-collection
$$\widehat{K}(k) = \left\lbrace \delta_{a,b}\in \widehat{K_0}: 0 \leq a \leq k, b \in \mathbb{N}\right\rbrace$$
of $\widehat{K_0}$. Let $E_{k}$ be the finite dimensional subspace of $L^{2}(\mathbb{R}^{n})$ spanned by $\left\lbrace \Phi_{\alpha} : |\alpha| = k \right\rbrace$, where $\Phi_{\alpha}$ are the normalized Hermite functions. Let $\mathcal{O} (E_{k})$ be the space of all bounded linear operators $T : E_{k} \to L^{2}(\mathbb{R}^{n})$. Then, $\mathcal{O} (E_{k})$ is a Hilbert space with respect to the following inner product: 
\begin{equation} \label{def-O(E_k)-inner-prod}
\left(T, S\right)_{k} =  \frac{k!(n-1)!}{(k+n-1)!} \sum_{|\alpha|=k}\left(T \Phi_{\alpha}, S \Phi_{\alpha} \right).
\end{equation}
Recall that we have fixed an orthonormal basis $\{P^{\delta}_{j}$ : $1\leq j\leq d(\delta)\}$ of $\mathcal{H}_{a,b}$.  In an impressive work \cite{Geller} Geller proved that the Weyl correspondence of $ P_j^\delta,$ 
$$ \{G(P^{\delta}_{j}) : \delta\in \widehat{K}(k), 1\leq j \leq d(\delta) \} $$ 
forms an orthogonal system in $\mathcal{O}(E_{k})$ and that every operator $T \in \mathcal{O}(E_{k})$ has the expansion 
\begin{align}\label{Oper-Spherical-Harmonic}
T = \sum_{\delta \in \widehat{K}(k)} \sum_{j=1}^{d(\delta)} (C_\delta(2k+n))^{-2} \left(T, G (P^{\delta}_{j})\right)_{k} G(P^{\delta}_{j})
\end{align}
where $(C_\delta(2k+n))^2 = (G(P_j^\delta), G(P_j^\delta)_k.$ These constants are known explicitly, see \cite{Geller}:
$$(C_\delta(2k+n))^2 = 4^{a+b} 2^{n+a+b-1} \frac{\Gamma(k+n+b)}{\Gamma(k-a+1)} \frac{\Gamma(k+1) \Gamma(n)}{\Gamma(k+n)}.$$
It follows that $\{ (C_\delta(2k+n))^{-1} G(P^{\delta}_{j}) : \delta\in \widehat{K}(k), 1\leq j \leq d(\delta)\}$ forms an orthonormal basis for $ \mathcal{O}(E_k).$

For the convenience of the readers (and also for later use) let us record the above result of Geller in the following form. The Hermite functions $ \Phi_\alpha, \alpha \in \N^n $ form an orthonormal basis for $ L^2(\R^n).$ They are eigenfunctions of the Hermite operator $ H = -\Delta+|x|^2 $ with eigenvalues $ (2|\alpha|+n).$ Given $ f \in L^2(\R^n) $ we let $ P_k $ stand for the orthogonal projection of $ L^2(\R^n) $ onto the eigenspace $ E_k.$ More explicitly,  for any $ k \in \N$, we have $ P_kf = \sum_{|\alpha|=k} (f, \Phi_\alpha) \Phi_\alpha.$ Now, for each $\delta \in \widehat{K}(k)$ and $ 1 \leq j \leq d(\delta) $ we let
$$ S_{j,k}^\delta = (C_\delta(2k+n))^{-1} G(P_j^\delta) P_k.$$
We note that $ S_{j,k}^\delta $ are Hilbert-Schmidt operators on $ L^2(\R^n) $ with unit norm. We have

\begin{theorem}[Geller] The collection $ \{ S_{j,k}^\delta:  k \in \N, \delta \in \widehat{K}(k), 1 \leq j \leq d(\delta) \}$ is an orthonormal basis for the Hilbert space $ \mathcal{S}_2 $ of Hilbert-Schmidt operators on $ L^2(\R^n) $ equipped with the inner product $ (T,S) = tr(S^\ast T).$ Moreover, for any Hilbert-Schmidt operator $ T $ on $ L^2(\R^n) $ we have
\begin{equation} T = \sum_{k=0}^\infty \sum_{\delta \in \widehat{K}(k)} \sum_{j=1}^{d(\delta)} ( T, S_{j,k}^\delta) S_{j,k}^\delta \end{equation}
where the series converges in $ \mathcal{S}_2 $ and we have the identity 
\begin{equation} 
\|T\|_{HS}^2 = \sum_{k=0}^\infty \sum_{\delta \in \widehat{K}(k)} \sum_{j=1}^{d(\delta)} |( T, S_{j,k}^\delta)|^2. 
\end{equation}
\end{theorem}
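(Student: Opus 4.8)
The plan is to obtain the theorem by gluing Geller's level-wise decomposition \eqref{Oper-Spherical-Harmonic} of $\mathcal{O}(E_k)$ along the spectral decomposition $L^2(\mathbb{R}^n)=\bigoplus_{k\geq 0}E_k$ of the Hermite operator $H$. Write $P_k$ for the orthogonal projection onto $E_k$, so that $I=\sum_{k\geq 0}P_k$ in the strong operator topology. For a Hilbert--Schmidt operator $T$ on $L^2(\mathbb{R}^n)$ the partial sums $\sum_{k\leq K}TP_k$ converge to $T$ in $\mathcal{S}_2$, since $T-\sum_{k\leq K}TP_k=T\big(\sum_{k>K}P_k\big)$ has Hilbert--Schmidt norm squared equal to $\sum_{k>K}\|TP_k\|_{HS}^2$, which tends to $0$ because $\sum_{k\geq 0}\|TP_k\|_{HS}^2=\|T\|_{HS}^2<\infty$. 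Moreover $(TP_k,SP_{k'})=tr(P_{k'}P_k S^\ast T)=0$ whenever $k\neq k'$, so the assignment $T\mapsto (TP_k)_{k\geq 0}$ identifies $\mathcal{S}_2$ with the orthogonal Hilbert space direct sum of the subspaces $\{AP_k:A\in\mathcal{O}(E_k)\}$, each of which is the copy of $\mathcal{O}(E_k)$ sitting inside $\mathcal{S}_2$ (every bounded $A:E_k\to L^2(\mathbb{R}^n)$ being Hilbert--Schmidt as $\dim E_k<\infty$).

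Next I would invoke Geller's theorem on each level. For fixed $k$, the operator $TP_k$, viewed as a map $E_k\to L^2(\mathbb{R}^n)$, lies in $\mathcal{O}(E_k)$, and \eqref{Oper-Spherical-Harmonic} writes it as an orthogonal series in the $G(P_j^\delta)$ over $\delta\in\widehat{K}(k)$ and $1\leq j\leq d(\delta)$; re-inflating to operators on $L^2(\mathbb{R}^n)$ that vanish on $E_k^\perp$ amounts to multiplying on the right by $P_k$, that is, replacing $G(P_j^\delta)$ by $G(P_j^\delta)P_k=C_\delta(2k+n)\,S_{j,k}^\delta$. Rewriting the inner product $(\,\cdot\,,\cdot)_k$ of \eqref{def-O(E_k)-inner-prod} in terms of the trace inner product of $\mathcal{S}_2$ --- via $tr\big(P_k(G(P_j^\delta))^\ast T\big)=\sum_{|\alpha|=k}\big(T\Phi_\alpha,G(P_j^\delta)\Phi_\alpha\big)$, and keeping careful account of the normalising constants $C_\delta(2k+n)$ so that the $S_{j,k}^\delta$ form an orthonormal (rather than merely orthogonal) system --- turns \eqref{Oper-Spherical-Harmonic} into $TP_k=\sum_{\delta\in\widehat{K}(k)}\sum_{j=1}^{d(\delta)}(T,S_{j,k}^\delta)\,S_{j,k}^\delta$, where one may write $(T,S_{j,k}^\delta)$ in place of $(TP_k,S_{j,k}^\delta)$ since $S_{j,k}^\delta P_k=S_{j,k}^\delta$.

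Summing over $k$ then yields the asserted expansion $T=\sum_{k,\delta,j}(T,S_{j,k}^\delta)\,S_{j,k}^\delta$, convergent in $\mathcal{S}_2$: the terms are pairwise orthogonal, and the scalar series $\sum_{k,\delta,j}|(T,S_{j,k}^\delta)|^2$ sums (level by level, by Geller) to $\sum_{k\geq 0}\|TP_k\|_{HS}^2=\|T\|_{HS}^2<\infty$, which is also the Parseval identity. Orthonormality of the system $\{S_{j,k}^\delta\}$ is checked directly: for $k\neq k'$ the trace defining $(S_{j,k}^\delta,S_{j',k'}^{\delta'})$ contains the factor $P_{k'}P_k=0$ and hence vanishes, while for $k=k'$ it reduces, through the same inner-product identity, to Geller's orthogonality relations for the $G(P_j^\delta)$ inside $\mathcal{O}(E_k)$; totality is immediate from the expansion just established, so the system is an orthonormal basis. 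The only genuine work is the constant-chasing in passing between $(\,\cdot\,,\cdot)_k$ and the trace inner product, together with the check that the per-level expansions assemble into one unconditionally convergent series in $\mathcal{S}_2$; neither presents a substantive obstacle once Geller's theorem and the orthogonal eigenspace decomposition of $L^2(\mathbb{R}^n)$ are in hand.
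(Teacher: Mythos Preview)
Your proposal is correct and follows essentially the same approach as the paper: use the orthogonal decomposition $\|T\|_{HS}^2=\sum_{k\geq 0}\|TP_k\|_{HS}^2$ to reduce to each level $E_k$, invoke Geller's orthogonal expansion \eqref{Oper-Spherical-Harmonic} on $\mathcal{O}(E_k)$, and check orthonormality directly from the definition of the $\mathcal{S}_2$ inner product and the orthogonality of the $G(P_j^\delta)$. The paper's proof is a two-sentence sketch of exactly this argument, so your more detailed write-up simply fleshes out what the authors leave implicit.
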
  
 
It is clear that the collection $ S_{j,k}^\delta $ is an orthonormal set in view of the definition of the inner product on $ \mathcal{S}_2 $ and the orthogonality properties of $ G(P_j^\delta).$ The rest of the theorem follows from the fact that $ \|T\|_{HS}^2 = \sum_{k=0}^\infty \| TP_k\|_{HS}^2 $ and Geller's result, see \cite{Geller}. 

\medskip 
For  each fixed $\delta = \delta_{a,b} $ and $1 \leq j \leq d(\delta),$ we can define the operator
\begin{align} \label{Oper-Spherical-Harmonic-M-j-delta} 
T_j^\delta := \sum_{k=a}^{\infty} (C_\delta(2k+n))^{-2} \left(T, G (P^{\delta}_{j})\right)_{k} P_k.
\end{align}
For any $ T \in B(L^2(\R^n)) $ the above series in (\ref{Oper-Spherical-Harmonic-M-j-delta}) converges in the strong operator topology.  Indeed, as $ f = \sum_{k=0}^\infty P_kf $ for any $ f \in L^2(\R^n) $ we only need to check that the sequence  $ (C_\delta(2k+n))^{-2} \left(T, G (P^{\delta}_{j})\right)_{k}$ is bounded. But this is easy to see: by the definition
$$ (T,G(P_j^\delta))_k = \frac{\Gamma(k+1) \Gamma(n)}{\Gamma(k+n)} \sum_{|\alpha|=k} (T\Phi_\alpha, G(P_j^\delta)\Phi_\alpha).      $$
Applying Cauchy-Schwarz inequality and recalling the definition of $ (C_\delta(2k+n))^{2} $ we obtain 
$$ | (T,G(P_j^\delta))_k|^2 \leq \|T\|^2 (C_\delta(2k+n))^{2} $$
after making use of the fact that the dimension of $ E_k $ is $ \frac{\Gamma(k+n)}{\Gamma(k+1) \Gamma(n)}.$ Thus we have the estimate
$$ (C_\delta(2k+n))^{-2}|(T,G(P_j^\delta))_k| \leq \|T\| (C_\delta(2k+n))^{-1} $$
which is clearly bounded in view of Stirling's formula for the Gamma function. The operator norm of $ T_j^\delta $ is given by
$$ \| T_j^\delta \| =  \sup_{k \in \N} (C_\delta(2k+n))^{-2} |(T,G(P_j^\delta))_k| < \infty.$$

\begin{remark}  In view of the above discussion we have the formal expansion
$$  T = \sum_{\delta \in \widehat{K_0}} \sum_{j=1}^{d(\delta)} G(P_j^\delta) T_j^\delta $$ for any $ T \in B(L^2(\R^n)).$  For any $ k \in \mathbb{N}$ we  get back the convergent expansion 
$$ T P_k = \sum_{\delta \in \widehat{K_0}} \sum_{j=1}^{d(\delta)} (T,S_{j,k}^\delta) S_{j,k}^\delta.$$
\end{remark}


\subsection{Homogeneous operators of degree \texorpdfstring{$0$}{}} \label{subsec-hom-degree-0} 
We are in a position to define  homogeneous operators of degree 0.
If $M \in B(L^{2}(\mathbb{R}^{n})),$ then $M P_k \in \mathcal{O}(E_{k})$ and hence $MP_k$ has the expansion as in (\ref{Oper-Spherical-Harmonic}):  that is, 
\begin{align} \label{Oper-Spherical-Harmonic-M}
M P_k = \sum_{\delta \in \widehat{K}(k)} \sum_{j=1}^{d(\delta)} (C_\delta(2k+n))^{-1} \left(M, G (P^{\delta}_{j})\right)_{k} (C_\delta(2k+n))^{-1} G(P^{\delta}_{j}) P_k. 
\end{align} 
This is the operator analogue of the expansion (\ref{harmonic-2}) which we recall for the convenience of comparison:
\begin{equation*} 
f(z) = \sum_\delta \sum_{j=1}^{d(\delta)} R^{-(a+b)} (f,P_j^\delta)_R   \,\,R^{-(a+b)} P_j^\delta(z), \quad \textup{for } z \in S_R. 
\end{equation*} 
From the above expansion we infer that a function $ f $ on $ \C^n $ is homogeneous of degree $ 0 $ if and only if the coefficients $ R^{-(a+b)} (f,P_j^\delta)_R $ are independent of the radius $ R.$ As observed by Geller \cite{Geller} the operator analogue of `restriction to a sphere of radius $ R $' is just $ MP_k.$ Also $ G(P_j^\delta) $ play the role of solid harmonics.  Thus a comparison of the above two expansions motivates us to make the following definition.

\begin{definition} We say that a bounded linear operator $ M $ on $ L^2(\R^n) $ is homogeneous of degree $ 0 $ if  for any $ \delta \in \widehat{K_0} $ the coefficients $(C_\delta(2k+n))^{-1} \left(M, G (P^{\delta}_{j})\right)_{k}$ are independent of $ k.$ 	
\end{definition}

Thus  for each $\delta = \delta_{a,b}$ and $1 \leq j \leq d(\delta)$, there exist constants $B^{\delta}_{j}$, independent of $k$, such that
\begin{equation} \label{Def-Hom}
(C_\delta(2k+n))^{-1} \left(M, G (P^{\delta}_{j})\right)_{k} =  B^{\delta}_{j}.
\end{equation}
For such operators we then have the expansion
\begin{equation}  MP_k = \sum_{\delta \in \widehat{K}(k)} \sum_{j=1}^{d(\delta)}  B_j^\delta   (C_\delta(2k+n))^{-1}  G(P_j^\delta)P_k, \end{equation}
and  the Hilbert-Schmidt norm of $ MP_K $ turns out to be  independent of $ k.$ More precisely,  
$$\| MP_k\|_{HS}^2  = \sum_{\delta \in \widehat{K}(k)}\sum_{j=1}^{d(\delta)} |B_j^\delta|^2 .$$ 
The above expansion for $ MP_k $ suggests that we introduce  
\begin{align}\label{c-delta-H-def}
C_\delta(H) = \sum_{k=a}^\infty C_\delta(2k+n) P_k, 
\end{align}
as a densely defined unbounded operator using which we represent $ M $ as 
\begin{align}\label{M-1}
M = \sum_{\delta \in \widehat{K}(k)} \sum_{j=1}^{d(\delta)} B_j^\delta G(P_j^\delta)C_\delta(H)^{-1}.
\end{align}
In the above, $ C_\delta(H)^{-1} = \sum_{k=a}^\infty (C_\delta(2k+n))^{-1}  P_k$ is a bounded operator on $L^2(\mathbb{R}^n)$ and the series converges in the strong operator topology. This is easily seen using the fact that for any $ f \in L^2(\R^n) $ the partial sums $ \sum_{j=0}^l P_j f $ converge to $ f $ in $L^2$-norm. We remark that the series converges in the Hilbert-Schmidt norm if $ M $ is Hilbert-Schmidt. \\

From the explicit formula for $(C_\delta(2k+n))^2 $ we infer, via Stirling's formula for the Gamma function, that $(C_\delta(2k+n))^{-1}$ behaves like $ (2k+n)^{-(a+b)/2} $ as $ k $ tends to infinity. Consequently, the operator $ C_\delta(H)^{-1} $ is comparable to the simpler operator $ H^{-(a+b)/2} .$ We will make this more precise in the next section by studying $ C_\delta(H) H^{-(a+b)/2},$ see Lemma \ref{lem-piece-transfer}.


\section{Weyl multipliers on Sobolev spaces} \label{sec-Weyl-mult-proof} 
In this section we prove Theorem \ref{Main-Weyl}, our main theorem stated in the introduction.  Let $ M $ be a bounded linear operator which is homogeneous of degree 0. Under the assumption that $ M $ defines a Weyl multiplier on the Sobolev space $ W_L^{N,1}(\C^n) $ we will prove that $ M = c I $ for some constant $c$. The proof follows in several steps. We begin with the following reduction. \\

Let us denote the action of $ U(n) $ on functions $f$ by $ R_\sigma f(z) = f(\sigma^{-1} z)$. From Stone-von Neumann theorem, we know that for every $\sigma\in U(n)$, there exists a unitary operator $\mu(\sigma)$ on $L^{2}(\mathbb{R}^{n})$ such that 
$$\pi(\sigma z,t) =\mu(\sigma) \pi(z,t) \mu(\sigma)^{*}.$$
For more about these operators we refer to \cite{Folland} and \cite{thangavelu}.
It follows from the definition of the Weyl transform that 
\begin{align} 
W(R_\sigma f) = \mu(\sigma) W(f) \mu(\sigma)^\ast.
\end{align}
This motivates us to define an action of $ U(n) $ on $ B(L^2(\R^n)) $ as follows: Given $ M \in B(L^2(\R^n))$, we define
$$ R_\sigma M := \mu(\sigma) M\mu(\sigma)^\ast.$$ 
As $T_M$ is the operator defined by the relation  $ W(T_M f) = M W(f),$ we easily verify that 
$$ T_{R_\sigma M} f =  R_\sigma T_M R_{\sigma^{-1}} f.$$ 
For any  $ \delta \in \widehat{K_0} $ let $ \chi_\delta $ denote its character.  We define 
\begin{equation} \label{ope-delta-piece}
M^\delta :=  \int_{U(n)} \chi_{\delta}(\sigma^{-1}) R_\sigma M \, d\sigma
\end{equation}
where $ d\sigma $ is the normalised Haar measure on the compact Lie group $ U(n).$
Then, one can verify that 
\begin{equation} \label{op-delta-piece-4} 
T_{M^\delta}f  = \int_{U(n)} \chi_{\delta}(\sigma^{-1}) T_{R_\sigma M} f \, d\sigma = \int_{U(n)} \chi_{\delta}(\sigma^{-1})  R_\sigma (T_M  R_{\sigma^{-1}} f) \, d\sigma. 
\end{equation}

\begin{lemma} \label{Weyl-rot}
Let $M \in B(L^2(\mathbb{R}^n))$ be a Weyl multiplier on $ W^{N,p}_{L}(\mathbb{C}^{n}).$ Then so is $M^{\delta}$  for any $\delta \in \widehat{K_0}$. 
\end{lemma}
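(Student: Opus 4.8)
The plan is to show that the operation $M \mapsto M^\delta$ preserves the class of Weyl multipliers on $W^{N,p}_{L}(\mathbb{C}^n)$ by exhibiting $T_{M^\delta}$ as an average of conjugates of $T_M$ by the unitary rotations $R_\sigma$, each of which acts as an isometry on the Laguerre Sobolev space. First I would record, from \eqref{op-delta-piece-4}, that
\[
T_{M^\delta}f = \int_{U(n)} \chi_\delta(\sigma^{-1})\, R_\sigma\bigl(T_M(R_{\sigma^{-1}}f)\bigr)\, d\sigma,
\]
so that the proof reduces to two facts: (i) the map $f \mapsto R_\sigma f$ is a bounded (indeed isometric) operator on $W^{N,p}_{L}(\mathbb{C}^n)$ with norm independent of $\sigma \in U(n)$; and (ii) one may legitimately interchange the $U(n)$-integration with the Laguerre derivatives $Z(\lambda)^\alpha\overline{Z}(\lambda)^\beta$ that define the Sobolev norm, so that Minkowski's integral inequality applies.

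For step (i), the key point is that $R_\sigma$ intertwines the vector fields $Z_j = Z_j(1)$ and $\overline{Z}_j$ up to a unitary change of basis: since $U(n)$ acts linearly on $\mathbb{C}^n$ and commutes with the special Hermite operator $L = L(1)$, one has $Z_j(R_\sigma f) = \sum_k \overline{\sigma_{jk}}\, R_\sigma(Z_k f)$ (and similarly for $\overline{Z}_j$), which follows by a direct computation from the explicit formulas $Z_j = \partial_{z_j} - \tfrac14 \overline{z}_j$, $\overline{Z}_j = \partial_{\overline z_j} + \tfrac14 z_j$ and the chain rule. Iterating, $Z(\lambda)^\alpha\overline{Z}(\lambda)^\beta (R_\sigma f)$ is a finite linear combination, with coefficients that are entries of $\sigma$ (hence bounded uniformly in $\sigma$) of terms $R_\sigma(Z^{\alpha'}\overline{Z}^{\beta'} f)$ with $|\alpha'| = |\alpha|$, $|\beta'| = |\beta|$; since $R_\sigma$ is an $L^p(\mathbb{C}^n)$-isometry (Lebesgue measure is $U(n)$-invariant), we get $\|R_\sigma f\|_{W^{N,p}_L} \le C_N \|f\|_{W^{N,p}_L}$ with $C_N$ independent of $\sigma$. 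The same argument shows $R_{\sigma^{-1}}$ maps $W^{N,p}_L(\mathbb{C}^n)$ boundedly to itself.

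Combining, for $f \in W^{N,p}_L \cap L^2(\mathbb{C}^n)$ we estimate, using \eqref{op-delta-piece-4}, the uniform boundedness of $|\chi_\delta(\sigma^{-1})|$ on the compact group $U(n)$, Minkowski's integral inequality applied to each norm $\|Z^\alpha\overline Z^\beta(\cdot)\|_p$, and steps (i)–(ii):
\[
\|T_{M^\delta}f\|_{W^{N,p}_L} \le \int_{U(n)} |\chi_\delta(\sigma^{-1})|\, \|R_\sigma T_M R_{\sigma^{-1}} f\|_{W^{N,p}_L}\, d\sigma \le C \sup_{\sigma} \|T_M R_{\sigma^{-1}} f\|_{W^{N,p}_L} \le C' \|f\|_{W^{N,p}_L},
\]
where $C'$ absorbs the Weyl multiplier norm of $M$ and the constant $C_N$. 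Since $W^{N,p}_L \cap L^2$ is dense in $W^{N,p}_L$ and $T_{M^\delta}$ is already bounded on $L^2(\mathbb{C}^n)$, this shows $T_{M^\delta}$ extends to a bounded operator on $W^{N,p}_L(\mathbb{C}^n)$, i.e. $M^\delta$ is a Weyl multiplier on that space. The main obstacle I anticipate is the careful bookkeeping in step (ii): justifying that the vector-valued integral $\int_{U(n)} \chi_\delta(\sigma^{-1}) R_\sigma T_M R_{\sigma^{-1}} f\, d\sigma$ commutes with the (closed, densely defined) operators $Z^\alpha\overline{Z}^\beta$ — this can be handled by first working with $f$ smooth enough (say Schwartz) that all derivatives are classical and the integrand is continuous in $\sigma$ in the relevant norms, and then passing to the limit, or alternatively by testing against Schwartz functions and using that $Z_j$, $\overline{Z}_j$ are continuous on $\mathcal{S}(\mathbb{C}^n)$; the commutation of $R_\sigma$ with $Z_j$ up to the unitary matrix $\sigma$ is the technical heart that makes everything go through.
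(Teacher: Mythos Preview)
Your proposal is correct and follows essentially the same route as the paper: both reduce via \eqref{op-delta-piece-4} to showing that $R_\sigma$ is bounded on $W^{N,p}_L(\mathbb{C}^n)$ uniformly in $\sigma$, and both establish this by the chain-rule identity $Z_j(R_\sigma f)=\sum_k \overline{\sigma_{jk}}\,R_\sigma(Z_k f)$ together with rotation invariance of Lebesgue measure. The paper carries out the explicit calculation only for $N=1$ and remarks that the general case needs a ``long-winding iteration''; your formulation of the iteration and the added care with Minkowski's inequality and density are sound and do not depart from the paper's argument in any essential way.
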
 
\begin{proof} 
In view of (\ref{op-delta-piece-4}), we only need to show that $ R_\sigma $ is bounded on $ W_L^{N,p}(\C^n)$ uniformly in $ \sigma.$
The result is  true for $N=0$ since $||R_{\sigma}f||_{L^p} = ||f||_{L^p}$ due  the rotation invariance of the Lebesgue measure. For the general case we only need to check that $ \|Z_jR_\sigma f\|_p \leq 
C \sum_{k=1}^n \| Z_kf\|_p $ and $ \| \overline{Z}_j R_\sigma f\|_p \leq C  \sum_{k=1}^n \|\overline{Z}_k f\|_p $ where $ C $ is independent of $ \sigma.$ (A long-winding iteration is needed for the case $ N > 1$ so we restrict ourselves to $ N =1.$) If we let $\sigma^{-1} =(a_{ij})_{n\times n},$ an easy calculation shows that
$$\frac{\partial R_{\sigma}f}{\partial z_{j}}(z) = \sum_{k=1}^{n} a_{kj} \frac{\partial f}{\partial z_k}(\sigma^{-1} z).$$
Moreover, if we let $ w = \sigma^{-1}   z $ then it follows that $ \overline{z}_j = \overline{(\sigma  w)}_j = \sum_{k=1}^n a_{kj} \overline{w}_k $ as $ \sigma $ is unitary.
Therefore,  $ \|Z_{j}R_{\sigma}f\|^{p}_p $ is given by 
$$  \int_{\mathbb{C}^{n}} \left|\sum_{k=1}^{n}a_{kj}\frac{\partial f}{\partial z_{k}}(\sigma^{-1} z) -\frac{\overline{z}_{j}}{4}f(\sigma^{-1} z)\right|^p dz 
=\int_{\mathbb{C}^{n}} \left|\sum_{k=1}^{n}a_{kj}\frac{\partial f}{\partial w_{k}}(w) -\sum_{k=1}^{n}\frac{a_{kj}\overline{w}_{k}}{4}f(w)\right|^p  dw .$$
which is clearly bounded by
$$ \int_{\C^n}  \left(  \sum_{k=1}^n |a_{kj}| |Z_k f(w)| \right)^p dw \leq \left( \sum_{k=1}^{n}|a_{kj}| \| Z_{k}f\|_p  \right)^p \leq C \left( \sum_{k=1}^{n} \| Z_{k}f\|_p  \right)^p . $$
This proves the claim for $ Z_j R_\sigma f.$ The proof for the term $ \overline{Z}_j R_\sigma f $ is similar.
This completes the proof of the lemma.
\end{proof}

When $ M $ is  homogeneous of degree zero, it is easy to calculate $M^\delta$ in terms of $ M.$ In fact in  the expansion of $ M$ given by (\ref{M-1}) we only need to pick up those terms involving $ P_j^\delta.$

\begin{lemma} \label{lem-M-delta-hom}
Let $M \in B(L^2(\mathbb{R}^n))$ be homogeneous of degree zero having the expansion (\ref{M-1}).  Then $M^{\delta}$, defined by (\ref{ope-delta-piece}), has the following simpler form: 
\begin{align} \label{M-delta-hom-0}
M^{\delta}=\sum_{j=1}^{d(\delta)} B^{\delta}_{j} G \left(P^{\delta}_{j} \right) C_\delta(H)^{-1}. 
\end{align}
Consequently, we note that every $ M^\delta $ is homogeneous of degree 0.
\end{lemma}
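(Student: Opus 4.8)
The plan is to compute the operator-valued integral in \eqref{ope-delta-piece} directly, using the expansion \eqref{M-1} of $M$ together with the transformation behaviour of the Weyl correspondence $G(P_j^\delta)$ under the $U(n)$-action $R_\sigma M = \mu(\sigma) M \mu(\sigma)^\ast$. First I would recall that since $M$ is homogeneous of degree zero, it admits the representation
\[
M = \sum_{\delta' \in \widehat{K_0}} \sum_{i=1}^{d(\delta')} B_i^{\delta'}\, G(P_i^{\delta'})\, C_{\delta'}(H)^{-1},
\]
so that, $R_\sigma$ being linear, $R_\sigma M = \sum_{\delta'} \sum_i B_i^{\delta'}\, \mu(\sigma) G(P_i^{\delta'}) C_{\delta'}(H)^{-1} \mu(\sigma)^\ast$. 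The key algebraic input is that $\mu(\sigma)$ commutes with $H$ (hence with each $P_k$ and with $C_{\delta'}(H)^{-1}$), because the $U(n)$-action on the Heisenberg group preserves the sublaplacian and its restriction to each $E_k$ is a metaplectic-type operator; and, crucially, that the Weyl correspondence intertwines the $U(n)$-action on polynomials with conjugation by $\mu(\sigma)$, i.e. $\mu(\sigma) G(P) \mu(\sigma)^\ast = G(\delta'(\sigma) P) = G(P \circ \sigma^{-1})$ for $P \in \mathcal{H}_{a',b'}$. This is exactly the covariance of the Weyl transform/Weyl correspondence recorded in Geller's work and in subsection \ref{subsec-sph-harmonics}.

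With that in hand, the computation reduces to a classical representation-theoretic orthogonality argument. Since $\{P_i^{\delta'}\}_{i=1}^{d(\delta')}$ is an orthonormal basis of the representation space $\mathcal{H}_{\delta'}$ on which $\delta'$ acts, we can write $\delta'(\sigma) P_i^{\delta'} = \sum_{l=1}^{d(\delta')} (\delta'(\sigma))_{li}\, P_l^{\delta'}$ with matrix coefficients $(\delta'(\sigma))_{li}$. Substituting and using linearity of $G$,
\[
M^\delta = \sum_{\delta'} \sum_{i,l} B_i^{\delta'} \Big( \int_{U(n)} \chi_\delta(\sigma^{-1})\, (\delta'(\sigma))_{li}\, d\sigma \Big) G(P_l^{\delta'}) C_{\delta'}(H)^{-1}.
\]
By the Schur orthogonality relations for the compact group $U(n)$, the integral $\int_{U(n)} \chi_\delta(\sigma^{-1}) (\delta'(\sigma))_{li}\, d\sigma = \overline{\int_{U(n)} \overline{\chi_\delta(\sigma)}\, (\delta'(\sigma))_{li}\, d\sigma}$ vanishes unless $\delta' \cong \delta$, in which case it equals $\tfrac{1}{d(\delta)}\delta_{li}$ (with $\delta_{li}$ the Kronecker symbol). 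Hence only the $\delta' = \delta$ term survives and, after absorbing the factor $1/d(\delta)$ into the constants (or, more carefully, after checking the normalisation so that it matches the stated formula with the same $B_j^\delta$), we obtain $M^\delta = \sum_{j=1}^{d(\delta)} B_j^\delta\, G(P_j^\delta)\, C_\delta(H)^{-1}$, which is \eqref{M-delta-hom-0}. That this $M^\delta$ is again homogeneous of degree zero is then immediate: its coefficients against $G(P_j^\delta)$ in the sense of Definition following \eqref{Def-Hom} are exactly the constants $B_j^\delta$, independent of $k$, and its coefficients against any $G(P_i^{\delta'})$ with $\delta' \neq \delta$ are zero.

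The main obstacle I anticipate is pinning down the exact normalisation constant in the Schur orthogonality step — specifically, verifying that the Haar-integral averaging reproduces the \emph{same} coefficients $B_j^\delta$ rather than a scalar multiple of them, which requires matching the normalisation of $\chi_\delta$, of the Haar measure $d\sigma$, and of the inner product on $\mathcal{H}_\delta$ used to define the orthonormal basis $\{P_j^\delta\}$. A secondary technical point is justifying that the operator-valued integral \eqref{ope-delta-piece} can be interchanged with the (strong-operator-topology convergent) series \eqref{M-1} and with the intertwining relation; this follows since $R_\sigma$ is an isometry of $B(L^2(\R^n))$, the partial sums are uniformly bounded, and $U(n)$ is compact so the integral is a norm-convergent Bochner-type average in the strong operator topology. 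Neither of these is deep, but the normalisation bookkeeping is where care is needed.
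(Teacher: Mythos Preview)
Your approach is correct and rests on the same three ingredients as the paper's proof: that $\mu(\sigma)$ commutes with $H$ (hence with each $P_k$ and with $C_{\delta'}(H)^{-1}$), the covariance $\mu(\sigma)G(P)\mu(\sigma)^\ast = G(\delta'(\sigma)P)$, and character orthogonality on $U(n)$. The organisation differs slightly: the paper first restricts to each $E_k$ by writing $M^\delta P_k=(MP_k)^\delta$, then computes the inner products $((MP_k)^\delta,S^{\rho}_{j,k})$ by invoking the Hecke--Bochner formula $G(P_j^\rho)P_k=c_n(a',b')\,W(P_j^\rho\,\varphi_{k-a'}^{n+a'+b'-1})$ to convert the conjugation $\mu(\sigma)^\ast S^{\rho}_{j,k}\mu(\sigma)$ into a Weyl transform of a rotated function, and then applies orthogonality at the level of functions on $\C^n$. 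You instead apply the covariance directly at the operator level and use Schur orthogonality on matrix coefficients; this is cleaner and avoids the Hecke--Bochner detour, since the relation $\mu(\sigma)G(P)\mu(\sigma)^\ast=G(R_\sigma P)$ follows immediately from $W(R_\sigma f)=\mu(\sigma)W(f)\mu(\sigma)^\ast$ together with the $U(n)$-equivariance of the symplectic Fourier transform.

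Your hesitation about the normalisation is justified: with the standard conventions ($d\sigma$ normalised, $\chi_\delta$ the ordinary character) one has $\int_{U(n)}\chi_\delta(\sigma^{-1})\,\delta'(\sigma)\,d\sigma=\tfrac{1}{d(\delta)}\,\delta_{\delta\delta'}\,\mathrm{Id}$, so the computation yields $M^\delta=\tfrac{1}{d(\delta)}\sum_j B_j^\delta G(P_j^\delta)C_\delta(H)^{-1}$. The paper's own proof glosses over the same constant. This discrepancy is harmless for everything that follows, since the only use made of the lemma is that $M^\delta=G(P_\delta)C_\delta(H)^{-1}$ for some $P_\delta\in\mathcal{H}_{a,b}$, and the subsequent Lemmas \ref{lem-piece-transfer} and \ref{lem-solid-harmonic-equiv} are insensitive to scalar multiples.
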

\begin{proof} It is enough to show that for every $ k \in \N $ we have the equality
\begin{align*}
M^\delta P_k =  \sum_{j=1}^{d(\delta)}  B^{{\delta}}_{j}   (C_\delta(2k+n))^{-1}  G(P^{{\delta}}_{j}) P_k
\end{align*}
which is the same as saying that $ (M^\delta, S_{j,k}^\delta) = B_j^\delta $  and $ (M^\delta, S_{j,k}^\rho) = 0 $ if $ \rho $ is not unitarily equivalent to $ \delta.$ (Here the inner products are taken in $ \mathcal{S}_2.$) We make use of the fact that $ \mu(\sigma) $ commutes with Hermite projections $ P_k$, see \cite{Folland} or \cite{thangavelu}. Consequently, as 
$$ (R_\sigma M)P_k = \mu(\sigma)M \mu(\sigma)^\ast P_k = \mu(\sigma) MP_k  \mu(\sigma)^\ast = R_\sigma (M P_k) $$
we see that $ M^\delta P_k = (MP_k)^\delta $ and so we need to prove
\begin{align*}
(M P_k)^\delta =  \sum_{j=1}^{d(\delta)}  B^{{\delta}}_{j}   (C_\delta(2k+n))^{-1}  G(P^{{\delta}}_{j}) P_k
\end{align*}
for which we consider the inner products
$$ ((MP_k)^\delta, S_{j,k}^\rho) = \int_{U(n)} \chi_{\delta}(\sigma^{-1}) (R_\sigma( MP_k), S_{j,k}^\rho)  \, d\sigma.$$
As operators $ \mu(\sigma)$ are unitary, we see that
$$ (R_\sigma( MP_k), \, S_{j,k}^\rho) = tr( (S_{j,k}^\rho)^\ast \mu(\sigma)MP_k \mu(\sigma)^\ast )  = (MP_k, \, \mu(\sigma)^\ast S_{j,k}^\rho \mu(\sigma)).$$
Thus we are left with calculating the integral
\begin{equation} 
\int_{U(n)} \chi_{\delta}(\sigma^{-1})  \mu(\sigma)^\ast  S_{j,k}^\rho \mu(\sigma) \, d\sigma. 
\end{equation}
At this point we make one more property of the Weyl correspondence, namely $ G(P_j^\rho) P_k= c_n(a^\prime, b^\prime) W(P_j^\rho \varphi_{k-a^\prime}^{n+a^\prime+b^\prime-1})$ where $a^\prime, b^\prime$ are the parameters associated to $ \rho $ and $\varphi_k^{a^\prime+b^\prime+n-1}$ are Laguerre functions of type $(a^\prime+b^\prime+n-1).$ This is a consequence of the Hecke-Bochner identity for the Weyl transform. We refer to equation (2.6.22) in \cite{thangavelu}. As the Laguerre functions are radial, it follows that the required integral is a constant multiple of the Weyl transform of the function 
$$ \varphi_{k-a^\prime}^{a^\prime + b^\prime+n-1}(z) \int_{U(n)}  \chi_{\delta}(\sigma^{-1}) P_j^\rho(\sigma^{-1} z) \, d\sigma.$$ 
But the above integral vanishes unless $\rho$ is unitarily equivalent to $\delta$ in which case it is $P_j^\delta$. This completes the proof of the lemma.
\end{proof}

So far we have proved that if a bounded linear operator $ M $ which is homogeneous of degree 0 defines a Weyl multiplier on the Sobolev space $ W_L^{N,p}(\C^n) $ then for any $ \delta \in \widehat{K_0}$ the operator $ M^\delta $ also defines a multiplier on the same space. Moreover, the operator $ M^\delta $ has a simpler form, namely
\begin{equation} \label{M-delta} 
M^\delta = \sum_{j=1}^{d(\delta)} B_j^\delta G(P_j^\delta) C_\delta(H)^{-1} = G(P_\delta) C_\delta(H)^{-1} 
\end{equation}
where $ P_\delta = \sum_{j=1}^{d(\delta)} B_j^\delta P_j^\delta $ is an element of $ \mathcal{H}_\delta = \mathcal{H}_{a,b}.$ At this stage we would like to replace the operator $ C_\delta(H)^{-1} $ by the simpler operator $ H^{-(a+b)/2}.$ Our goal is to establish the following result. 

\begin{lemma} \label{lem-piece-transfer}
Suppose $ M^\delta $ given in (\ref{M-delta}) defines a Weyl multiplier on the Sobolev space $ W_L^{N,p}(\C^n), 1 \leq p <\infty,$ then so does the operator $ G(P_\delta)H^{-(a+b)/2}.$
\end{lemma}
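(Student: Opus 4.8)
The plan is to show that the two operators $G(P_\delta)C_\delta(H)^{-1}$ and $G(P_\delta)H^{-(a+b)/2}$ differ by a ``harmless'' factor, namely by the operator
\[
A_\delta := C_\delta(H)\,H^{-(a+b)/2} = \sum_{k=a}^\infty C_\delta(2k+n)\,(2k+n)^{-(a+b)/2}\,P_k,
\]
so that $G(P_\delta)H^{-(a+b)/2} = \big(G(P_\delta)C_\delta(H)^{-1}\big)\,A_\delta = M^\delta A_\delta$ as operators on $L^2(\R^n)$, after verifying that $A_\delta$ maps the relevant domains into themselves. Since $M^\delta$ is assumed to be a Weyl multiplier on $W_L^{N,p}(\C^n)$, it will suffice to prove that the Weyl multiplier $T_{A_\delta}$ associated to $A_\delta$ is bounded on $W_L^{N,p}(\C^n)$; then $T_{G(P_\delta)H^{-(a+b)/2}} = T_{M^\delta}\circ T_{A_\delta}$ is bounded on that space as a composition of bounded operators. (One should double-check the order of composition against the convention $W(T_M f)=MW(f)$, which gives $W(T_{M_1}T_{M_2}f)=M_1M_2 W(f)$, i.e. $T_{M_1 M_2}=T_{M_1}T_{M_2}$, so the factorization $M^\delta A_\delta$ is the correct one on the multiplier side.)

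The key step is therefore to understand $A_\delta$ as a function of the Hermite operator $H$. Using the explicit formula for $(C_\delta(2k+n))^2$ recorded in subsection~\ref{subsec-sph-harmonics}, namely
\[
(C_\delta(2k+n))^2 = 4^{a+b}\,2^{n+a+b-1}\,\frac{\Gamma(k+n+b)}{\Gamma(k-a+1)}\cdot\frac{\Gamma(k+1)\,\Gamma(n)}{\Gamma(k+n)},
\]
one sees that $C_\delta(2k+n)^2(2k+n)^{-(a+b)}$ equals a constant times a ratio of Gamma functions whose leading behaviour in $k$ is $1$ (the paper already notes, via Stirling, that $(C_\delta(2k+n))^{-1}\sim (2k+n)^{-(a+b)/2}$). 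More precisely I would write $A_\delta = m_\delta(H)$ where $m_\delta$ is a bounded function on the spectrum $\{(2k+n): k\geq a\}$ of $H$ restricted to $\bigoplus_{k\ge a} E_k$, admitting an asymptotic expansion $m_\delta(2k+n) = c_\delta\big(1 + O(1/k)\big)$. The natural route is to realize $m_\delta(H)$ (more safely, $m_\delta(L)$ on the twisted-convolution side, where $W_\lambda(L f)=4W_\lambda(f)H$ for $\lambda=1$, modulo constants) via the spectral/heat-semigroup machinery already set up: one writes $m_\delta(H)$ as $c_\delta I$ plus a remainder that is a nice function of $H$ — e.g. expressible through $(H+d)^{-s}$-type operators — whose twisted-convolution kernel is integrable, so that Proposition~\ref{prop-inverse-frac-twisted-lap} (and the $L^1$-boundedness statements it yields) applies. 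Boundedness of $T_{A_\delta}$ on $W_L^{N,p}(\C^n)$ then reduces to boundedness on $L^p(\C^n)$ together with the fact that $A_\delta$, being a function of $H$, commutes with $L$ and hence with the vector fields $Z_j(\lambda),\overline Z_j(\lambda)$ up to lower-order terms — so it preserves the Sobolev norm. Here one should invoke Theorem~\ref{thm-Weyl-norm-equiv} (equivalence of the homogeneous and inhomogeneous Laguerre--Sobolev norms for $1\le p<\infty$) to pass freely between the two descriptions of $W_L^{N,p}$ and simplify the commutator bookkeeping.

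I expect the main obstacle to be precisely the control of the multiplier $m_\delta(H)$ in the $L^p$-setting for $p=1$: on $L^2$ everything is trivial by Plancherel, but to get $L^1$-boundedness of $T_{A_\delta}$ one genuinely needs an integrable-kernel representation (or an equivalent Marcinkiewicz-type estimate adapted to the twisted convolution), and the ratio-of-Gamma-functions multiplier must be decomposed carefully — the constant term $c_\delta I$ is fine, and the remainder of order $1/k$ should correspond to a kernel with an extra power of $(H)^{-1}$-type decay, hence integrable by the heat-kernel estimate in Proposition~\ref{prop-inverse-frac-twisted-lap}. A secondary subtlety is domain/convergence bookkeeping: $C_\delta(H)^{-1}$ and $H^{-(a+b)/2}$ annihilate the low Hermite levels $E_k$ with $k<a$ (or are not even defined there), so $A_\delta$ should be understood as acting on $\bigoplus_{k\ge a} E_k$ and extended by, say, the identity (or zero) on the complementary finite-dimensional space; one must check this extension does not spoil the multiplier identity $G(P_\delta)H^{-(a+b)/2}=M^\delta A_\delta$, which it does not since $G(P_\delta)$ already kills those low levels. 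Modulo these points, the argument is a transference of boundedness along a bounded, $H$-diagonal ``correction factor'', and the conclusion of Lemma~\ref{lem-piece-transfer} follows.
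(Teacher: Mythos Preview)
Your overall strategy is exactly the paper's: write $G(P_\delta)H^{-(a+b)/2}=M^\delta\cdot A_\delta$ with $A_\delta=C_\delta(H)H^{-(a+b)/2}$, and show that $T_{A_\delta}$ is bounded on $W_L^{N,p}(\C^n)$ by proving its twisted--convolution kernel lies in $L^1(\C^n)$.

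Two corrections are worth making. First, the commutation you need is \emph{exact}, not ``up to lower-order terms'': every left Weyl multiplier is of the form $f\mapsto K\times f$, and one has $Z_j(K\times f)=K\times Z_jf$, $\overline Z_j(K\times f)=K\times\overline Z_jf$. Hence once $K\in L^1(\C^n)$ the operator is automatically bounded on every $W_L^{N,p}$; no appeal to Theorem~\ref{thm-Weyl-norm-equiv} or commutator bookkeeping is required. Second, your plan ``$c_\delta I$ plus an $O(1/k)$ remainder expressible through $(H+d)^{-s}$'' is too optimistic as stated: a generic $O(1/k)$ spectral perturbation of $H$ need not produce an $L^1$ twisted kernel. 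The paper instead exploits the explicit product structure
\[
C_\delta(2k+n)(2k+n)^{-(a+b)/2}
=\text{const}\cdot\prod_{j=n}^{n+b-1}\Big(\tfrac{k+j}{2k+n}\Big)^{1/2}\prod_{j=0}^{a-1}\Big(\tfrac{k-j}{2k+n}\Big)^{1/2},
\]
and treats each factor separately: for $(1+d_j/(2k+n))^{1/2}$ one uses the identity $(I+d_jL^{-1})^{1/2}=I+\tfrac12 d_jL^{-1/2}\int_0^1(L+td_j)^{-1/2}\,dt$ together with Proposition~\ref{prop-inverse-frac-twisted-lap}; for $(1-(n+2j)/(2k+n))^{1/2}$ one first truncates the low Hermite levels (a finite, integrable correction) and then expands $(1-d)^{1/2}$ as a power series, estimating the tail kernels $\sum_{k\ge l}(2k+n)^{-i}\varphi_k^{n-1}$ directly in $L^1$. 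Filling in these explicit decompositions is the substance of the lemma; the rest of your outline is correct.
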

\begin{proof} By writing $$ G(P_\delta)H^{-(a+b)/2} = G(P_\delta) C_\delta(H)^{-1} C_\delta(H) H^{-(a+b)/2}$$ we only need to show that $ C_\delta(H) H^{-(a+b)/2} $ defines  a Weyl multiplier on all Sobolev spaces $ W_L^{N,p}(\C^n).$  Recall that $ C_\delta(2k+n) $ is a constant multiple of 
\begin{align*}
\left(\frac{\Gamma(k+n+b) \, \Gamma(k+1)}{\Gamma(k-a+1) \, \Gamma(k+n)}\right)^{1/2} = \left((k+n+b-1) \cdots (k+n) k \cdots (k-a+1)\right)^{1/2}. 
\end{align*} 
where $k \geq a$ and if any of $a$ or $b$ is $0$ then corresponding to that no term appears in the above expression. Thus we are led to consider the operator 
$$B_{\delta}(H) = \sum_{k = a}^\infty \left( \frac{k+n+b-1}{2k+n} \times \cdots \times \frac{k+n}{2k+n} \times \frac{k}{2k+n} \times \cdots \times \frac{k-a+1}{2k+n} \right)^{1/2} P_k.$$
As the corresponding Weyl multiplier  is a product of several  multipliers, it is enough to treat individual factors separately.
	
We first consider the multipliers given by  $ m_j(H) = \sum_{k=a}^\infty \left( \frac{k+j}{2k+n} \right)^{1/2} P_k$ where $ n \leq j \leq (n+b-1).$ The corresponding operator $ T_{m_j(H)} $ is a twisted convolution operator $ K_j \times f $, where the kernel $K_j$ is given by 
$$ K_j(z) = (2\pi)^{-n} \sum_{k=a}^\infty \left( \frac{k+j}{2k+n} \right)^{1/2} \varphi_k^{n-1} (z).$$ 
Since $ Z_j (g \times f) = g \times Z_j f $ and $ \overline{Z}_j (g \times f) = g \times \overline{Z}_j f $ we only need to show that $ K_j $ are integrable. We rewrite  the kernel as
$$ K_j(z) = C_n \sum_{k=a}^\infty \left( \frac{2k+2j}{2k+n}\right)^{1/2} \varphi_k^{n-1}(z) = C_n \sum_{k=a}^\infty \left(1+ \frac{2j-n}{2k+n}\right)^{1/2} \varphi_k^{n-1}(z).$$ 
Note that if we let $ d_j = 2j-n$ then $  n \leq d_j \leq n+2(b-1).$ Consider the kernel
\begin{align*}
& \sum_{k=a}^\infty \left(  1+ \frac{d_j}{2k+n}\right) ^{1/2} \varphi_k^{n-1}(z) \\ 
&= \sum_{k=0}^\infty \left(  1+ \frac{d_j}{2k+n}\right) ^{1/2} \varphi_k^{n-1}(z)- \sum_{k=0}^{a-1} \left(  1+ \frac{d_j}{2k+n}\right) ^{1/2} \varphi_k^{n-1}(z).
\end{align*} 
The second term is an integrable function and hence defines a bounded operator on $ L^1(\C^n) $. The operator corresponding to the infinite sum is 
$$ \left( I + \frac{d_j}{L}\right) ^{1/2} = I + \frac{1}{2} d_j L^{-1/2} \int_0^1 (L+td_j)^{-1/2} dt$$ 
where $ L $ is the twisted Laplacian. As $ L^{-1/2} $ and $ (L+t d_j)^{-1/2} $ are  given by integrable kernels (see Proposition \ref{prop-inverse-frac-twisted-lap}) we are done. 

For the remaining terms corresponding to the factors $ \left(  \frac{k-j}{2k+n}\right) ^{1/2} $, with $ 0 \leq j \leq a-1 $, the above idea fails and  we have to use the power series expansion. As earlier we only need to show that the kernel of each such operator is integrable. With large $ l $ to be chosen later we split the kernel into two parts: 
$$ \sum_{k=a}^\infty \left( \frac{k-j}{2k+n} \right) ^{1/2} \varphi_k^{n-1} = \sum_{k=l}^\infty \left( \frac{k-j}{2k+n} \right) ^{1/2}  \varphi_k^{n-1} + \sum_{k=a}^{l-1} \left( \frac{k-j}{2k+n} \right) ^{1/2}  \varphi_k^{n-1} .$$ 
Once again the second term is an integrable function and hence defines a bounded operator on $ L^1(\C^n) $, so we only need to analyse the kernel given by the infinite sum. Note that 
$$ \left(  \frac{k-j}{2k+n}\right) ^{1/2}  = 2^{-1/2} \left(  1- \frac{n+2j}{2k+n}\right) ^{1/2} $$ 
where $ 0 \leq j \leq a-1 $ and $ k \geq l \geq a$ so that  $ 0 < \frac{n+2j}{2k+n} <1.$ We make use of the power series expansion 
\begin{align} \label{expansion-half-power}
(1-d)^{1/2} = 1- \sum_{i=1}^\infty c_i d^i
\end{align} 
where $ c_i $ are explicitly given by
$$ c_i = \frac{\Gamma(2i+1)}{2^{2i} (\Gamma(i+1))^2 (2i-1)}.$$
Note that the above series converges for all $ |d| \leq 1.$ 

Therefore, writing $c_0 = -1$, we can expand 
$$ \left( 1- \frac{n+2j}{2k+n}\right)^{1/2} = - \sum_{i=0}^\infty c_i  (n+2j)^{i} (2k+n)^{-i} $$
and we are led to consider the operators 
$$ \sum_{k=l}^\infty \left( \frac{k-j}{2k+n}\right) ^{1/2} \varphi_k^{n-1} \times f = - 2^{-1/2} \sum_{i=0}^\infty c_i (n+2j)^{i}  T_{l,i} f $$
where the operators $ T_{l,i} $ are defined by
$$ T_{l,i} f = \sum_{k=l}^\infty (2k+n)^{-i} \varphi_k^{n-1} \times f.$$ 
For each fixed $i$, the operator $T_{l,i}$ can be estimated easily. For this note that 
\begin{align*}
T_{l,i} f &= \sum_{k=0}^\infty (2k+n)^{-i} \varphi_k^{n-1} \times f - \sum_{k=0}^{l-1} (2k+n)^{-i} \varphi_k^{n-1} \times f \\ 
&= L^{-i} f - \sum_{k=0}^{l-1} (2k+n)^{-i} \varphi_k^{n-1} \times f
\end{align*}
which implies that $T_{l,i}$ is a bounded operator on $ L^1(\C^n) $ (in view of Proposition \ref{prop-inverse-frac-twisted-lap}). As a consequence, the finite sum of operators $ \sum_{i=0}^{l-1} c_i (n+2j)^i T_{l,i} f $ is bounded on $ L^1(\C^n) $. So, we concentrate on estimating $ T_{l,i} $ for $ i\geq l.$ If $ K_{l,i} $ stands for the kernel of $ T_{l,i} $ we see that 
$$ \| K_{l,i} \|_1 \leq (2l+n)^{-i/2} \sum_{k=l}^\infty (2k+n)^{-i/2} \|\varphi_k^{n-1}\|_1. $$ 
We choose $ l > a^2 $ large enough so that the above series converges for $ i \geq l$ and $ (n+2j) < (2l+n)^{1/2}.$ Consequently,
$$ \left\| \sum_{i=l}^\infty c_i (n+2j)^i K_{l,i} \right\|_1 \leq \sum_{k=l}^\infty (2k+n)^{-l/2} \|\varphi_k^{n-1}\|_1 .$$
This completes the proof of the lemma. 
\end{proof}

Starting from a Weyl multiplier $ M $ on $ W_L^{N,p}(\C^n) $ we have shown that  the operator $ G(P_\delta) H^{-(a+b)/2} ,$ where $ P_\delta $ is a particular element of $ \mathcal{H}_{a,b} ,$ is also a Weyl multiplier. We now show that the same is true of $ G(P) H^{-(a+b)/2} $ for any element $ P \in   \mathcal{H}_{a,b}.$

\begin{lemma} \label{lem-solid-harmonic-equiv}
Suppose for some $ P \in \mathcal{H}_{a,b} $ the operator $ G(P) H^{-(a+b)/2} $ defines a Weyl multiplier on $ W_L^{N,p}(\C^n).$  Then  the same is true for any $ Q \in \mathcal{H}_{a,b}.$
\end{lemma}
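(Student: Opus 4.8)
The plan is to exploit the fact that the space $\mathcal H_{a,b}$ carries an irreducible representation $\delta=\delta_{a,b}$ of $U(n)$, so any two elements of $\mathcal H_{a,b}$ can be connected through the $U(n)$-action (together with linear combinations). Concretely, I would first observe that for $\sigma\in U(n)$ the operator $R_\sigma\big(G(P)H^{-(a+b)/2}\big) = \mu(\sigma)\,G(P)H^{-(a+b)/2}\,\mu(\sigma)^\ast$ equals $G(\delta(\sigma)P)\,H^{-(a+b)/2}$. This is because the Weyl correspondence intertwines the geometric $U(n)$-action on polynomials with conjugation by $\mu(\sigma)$, i.e. $\mu(\sigma)G(P)\mu(\sigma)^\ast = G(R_\sigma P) = G(P(\sigma^{-1}\cdot))$, and because $\mu(\sigma)$ commutes with the Hermite projections $P_k$ (hence with $H^{-(a+b)/2}$), a fact already recalled in the proof of Lemma \ref{lem-M-delta-hom}.

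Next, exactly as in Lemma \ref{Weyl-rot}, the operators $R_\sigma$ are uniformly bounded on $W^{N,p}_L(\mathbb C^n)$, and the conjugation $f\mapsto R_\sigma T_{G(P)H^{-(a+b)/2}} R_{\sigma^{-1}} f$ realises the Weyl multiplier $R_\sigma\big(G(P)H^{-(a+b)/2}\big) = G(\delta(\sigma)P)H^{-(a+b)/2}$. Therefore $G(\delta(\sigma)P)H^{-(a+b)/2}$ is a Weyl multiplier on $W^{N,p}_L(\mathbb C^n)$ for every $\sigma\in U(n)$, with norm controlled by (uniform bound on $R_\sigma$)${}^2$ times the norm of $G(P)H^{-(a+b)/2}$. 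Since the multiplier norm is a norm, finite linear combinations $\sum_i c_i\, G(\delta(\sigma_i)P)H^{-(a+b)/2} = G\big(\sum_i c_i\,\delta(\sigma_i)P\big)H^{-(a+b)/2}$ are again Weyl multipliers on the same space. The last step is the representation-theoretic one: because $P\neq 0$ and $\delta=\delta_{a,b}$ is irreducible on $\mathcal H_{a,b}$, the span of the orbit $\{\delta(\sigma)P:\sigma\in U(n)\}$ is a nonzero $U(n)$-invariant subspace of $\mathcal H_{a,b}$, hence all of $\mathcal H_{a,b}$. (If $P=0$ there is nothing to prove, as $G(0)=0$.) Consequently every $Q\in\mathcal H_{a,b}$ is such a finite linear combination, and $G(Q)H^{-(a+b)/2}$ is a Weyl multiplier on $W^{N,p}_L(\mathbb C^n)$, as claimed.

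The one genuine point to verify carefully — and what I would expect to be the main obstacle — is the intertwining identity $\mu(\sigma)G(P)\mu(\sigma)^\ast = G(P\circ\sigma^{-1})$ at the level of the (unbounded, differential) operators $G(P)$, rather than merely for the Weyl transform of $L^1$ functions. This should follow from the defining relation $G_\lambda(f)=W_\lambda(\mathcal F_\lambda f)$ together with the equivariance $W(R_\sigma F)=\mu(\sigma)W(F)\mu(\sigma)^\ast$ and the $U(n)$-equivariance of the symplectic Fourier transform (which in turn is inherited from that of the ordinary Fourier transform, since $\mathcal F_\lambda$ and $\widehat{\ \cdot\ }$ differ only by a complex rescaling of the variable that commutes with the unitary $\sigma$), applied with $f$ a polynomial so that $\mathcal F_\lambda f$ is a distribution supported at the origin; one checks the identity first on Schwartz vectors or on the dense span of Hermite functions, where all operators act without domain issues. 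Once this equivariance is in hand, the rest is the soft orbit-span argument above, which is the same device used to pass from $m$ to $m_\delta$ throughout the paper.
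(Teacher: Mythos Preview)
Your proposal is correct and follows essentially the same route as the paper: irreducibility of $\delta_{a,b}$ on $\mathcal H_{a,b}$ gives $Q=\sum_j c_j\,\delta(\sigma_j)P$, the intertwining identity $\mu(\sigma)G(P)\mu(\sigma)^\ast=G(\delta(\sigma)P)$ together with $[\mu(\sigma),P_k]=0$ yields $G(Q)H^{-(a+b)/2}=\sum_j c_j\,\mu(\sigma_j)G(P)H^{-(a+b)/2}\mu(\sigma_j)^\ast$, and Lemma~\ref{Weyl-rot} finishes it. The only difference is in how the intertwining identity is justified: the paper verifies it on each eigenspace via the Hecke--Bochner formula $G(P)P_k=c_n(a,b)\,W(P\,\varphi_{k-a}^{n+a+b-1})$ and radiality of the Laguerre factor, whereas you argue directly from $G=W\circ\mathcal F_\lambda$ and the $U(n)$-equivariance of the symplectic Fourier transform; both arguments are valid and lead to the same conclusion.
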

\begin{proof} 
In proving this result we make use of the fact that $ \delta $ is an irreducible unitary representation of the compact Lie group $ U(n) $ on the finite dimensional Hilbert space $ \mathcal{H}_{a,b}.$ 
Hence given  $ P, Q  \in \mathcal{H}_{a,b},$ we can express  $Q $  as a finite linear combination of $\delta(\sigma_j) P$, where $\sigma_j$ varies over $U(n)$ (see \cite{JST}, \cite{Sanjay-Thangavelu}). Thus we have $ Q(z) = \sum_{j=1}^l c_j P(\sigma_j^{-1} z).$  In view of the formula $ G(Q)P_k = c_n(a,b) W(Q \varphi_{k-a}^{n+a+b-1})$ we obtain
$$ G(Q) P_k = c_n(a,b) \sum_{j=1}^l c_j  W \left( \delta(\sigma_j) P \varphi_{k-a}^{n+a+b-1} \right) = \sum_{j=1}^l c_j \mu(\sigma_j) G(P) \mu(\sigma_j)^\ast P_k$$
where we have used the fact that $ \varphi_k^{n+a+b-1} $ is radial and  $ P_k $ commutes with $ \mu(\sigma) $ for any $ \sigma \in U(n).$ As the above is true for any $ k $ we have proved
$$ G(Q) H^{-(a+b)/2} = \sum_{j=1}^l c_j  \mu(\sigma_j) G(P) H^{-(a+b)/2} \mu(\sigma_j)^\ast. $$
Therefore, if $ R(P) $ and $ R(Q) $ stand for the operators with multipliers $ G(P)H^{-(a+b)/2} $ and $ G(Q)H^{-(a+b)/2} $ we have the relation
$$  W(R(Q)f) = \sum_{j=1}^l c_j  \mu(\sigma_j) G(P)H^{-(a+b)/2}  \mu(\sigma_j)^\ast W(f) .$$
Recalling that $ W(R_\sigma f) = \mu(\sigma) W(f) \mu(\sigma)^\ast$ the above simply means that
$$  R(Q) f  = \sum_{j=1}^l c_j  R_{\sigma_j} R(P) R_{\sigma_j^\ast} f.$$
As shown in the proof of Lemma \ref{Weyl-rot}, the operators  $ R_\sigma $ are bounded on Sobolev spaces. Consequently, from the above we conclude that $ R(Q) $ is bounded on a Sobolev space whenever $R(P) $ is bounded.
This proves the lemma.
\end{proof}

We now consider the Riesz transform $ R(Q) $ where $ Q(z) = z_j^a \bar{z_k}^b $ where $ j \neq k.$ (When $ n=1,$ we take $ Q(z) = z^a $ or $ \bar{z}^b.$) In this case $G(Q) $ is explicitly known. Indeed, as proved in \cite{Geller} (see also Proposition 2.7.4 in \cite{thangavelu}) we have, for any $ \lambda \in \R^\ast$
$$ G_\lambda(z_j^a \bar{z}_k^b) = c(a,b) \lambda^{-a-b}  (A_k(\lambda)^\ast)^b A_j(\lambda)^a.$$
Thus the boundedness of the Weyl multiplier $ G(P^\delta)H^{-(a+b)/2} $ on a Sobolev space implies the boundedness of the higher order Riesz transforms $ R_{jk} $ corresponding to the multipliers $ (A_k^\ast)^b A_j^a H^{-(a+b)/2}.$ More generally, we can check that the same is true of $ R_{jk}(\lambda)$ with multiplier $(A_k(\lambda)^\ast)^b A_j(\lambda)^a H(\lambda)^{-(a+b)/2}.$
For any $ \lambda > 0 $ we have the relations $ W_\lambda(Z_k(-\lambda)f) = \frac{i}{2} (A_k(\lambda))^\ast W_\lambda(f) $ and $ W_\lambda(\overline{Z}_j (-\lambda)f) = \frac{i}{2} A_j(\lambda) W_\lambda(f) $ and hence
$$ R_{jk}(\lambda) = Z_k(-\lambda)^b \overline{Z}_j (-\lambda)^a L(-\lambda)^{-(a+b)/2}.$$
The next result shows that these Riesz transforms converge to Euclidean Riesz transforms on $ \R^{2n} $ as $ \lambda $ decreases to zero.

\begin{lemma} \label{conv-scaled-riesz}
Let $ f $ be a Schwartz function on $ \C^n. $  Then  for any $\alpha, \beta \in \N^n, $ there exists a constant $ C_{\alpha,\beta}, $ depending only on $\alpha, \beta $ and $ n $ such that the following point-wise convergence holds: 
\begin{align}
& \lim_{\lambda \to 0^{+}} Z(-\lambda)^\alpha \overline{Z}(-\lambda)^\beta L(-\lambda)^{-(|\alpha|+|\beta|)/2} f(z) \\
\nonumber & = C_{\alpha,\beta} \left(\frac{\partial}{\partial x} - i \frac{\partial}{\partial y} \right)^{\alpha } \left(\frac{\partial}{\partial x} + i \frac{\partial}{\partial y} \right)^{\beta } \left(-\Delta_{\C^{n}}\right)^{-(|\alpha|+|\beta|)/2} f(z). 
\end{align} 
\end{lemma}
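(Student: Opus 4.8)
The plan is to reduce the statement to a one-dimensional scaling analysis of the relevant kernels and then pass to the limit using an explicit kernel representation combined with dominated convergence. First I would record the exact scaling relation that connects the scaled objects at parameter $\lambda>0$ to the unscaled ones at $\lambda=1$. Writing $\delta_{\sqrt\lambda}g(z) = g(\sqrt{\lambda}\,z)$, one checks directly from the formulas $Z_j(-\lambda) = \frac{\partial}{\partial z_j}+\frac{\lambda}{4}\bar z_j$ and $L(-\lambda) = -\Delta_{\C^n} + \frac{\lambda^2}{4}|z|^2 - i\lambda\sum_j(x_j\partial_{y_j}-y_j\partial_{x_j})$ that
\begin{align*}
Z_j(-\lambda) = \sqrt{\lambda}\,\delta_{\sqrt\lambda}^{-1}\circ Z_j(-1)\circ \delta_{\sqrt\lambda}, \qquad L(-\lambda) = \lambda\,\delta_{\sqrt\lambda}^{-1}\circ L(-1)\circ\delta_{\sqrt\lambda},
\end{align*}
so that $Z(-\lambda)^\alpha\overline{Z}(-\lambda)^\beta L(-\lambda)^{-(|\alpha|+|\beta|)/2} = \delta_{\sqrt\lambda}^{-1}\circ\big(Z(-1)^\alpha\overline{Z}(-1)^\beta L(-1)^{-(|\alpha|+|\beta|)/2}\big)\circ\delta_{\sqrt\lambda}$; in particular the power of $\lambda$ cancels exactly, which is the whole point of choosing the homogeneous exponent $(|\alpha|+|\beta|)/2$. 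Thus the operator is a twisted-convolution operator $f\mapsto f\ast_{-\lambda} \Psi_{\alpha,\beta}^\lambda$ whose kernel $\Psi_{\alpha,\beta}^\lambda$ is obtained from the fixed kernel $\Psi_{\alpha,\beta}^1$ (the kernel of $Z(-1)^\alpha\overline{Z}(-1)^\beta L(-1)^{-(|\alpha|+|\beta|)/2}$) by the dilation $\Psi_{\alpha,\beta}^\lambda(z) = \lambda^n\,\Psi_{\alpha,\beta}^1(\sqrt\lambda\,z)$ — here I use that $L(-1)^{-s}$ acts by twisted convolution, via the heat-semigroup representation $L(-1)^{-s}f = \frac{1}{\Gamma(s)}\int_0^\infty t^{s-1}f\ast_{-1}p_t^{-1}\,dt$, so that $\Psi_{\alpha,\beta}^1 = \frac{1}{\Gamma(s)}\int_0^\infty t^{s-1}\,Z(-1)^\alpha\overline{Z}(-1)^\beta p_t^{-1}\,dt$ with $s = (|\alpha|+|\beta|)/2$.

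Next I would make the limit computation explicit. The heat kernel $p_t^{-1}(z) = (4\pi)^{-n}\big(\tfrac{-1}{\sinh(-t)}\big)^n e^{\frac{1}{4}\coth(-t)|z|^2}$ — more cleanly, for the twisted Laplacian at parameter $\lambda=1$ one has $p_t(z) = (4\pi)^{-n}(\sinh t)^{-n}e^{-\frac14(\coth t)|z|^2}$; applying the first-order operators $Z_j(-1),\overline Z_j(-1)$ to a Gaussian produces a polynomial times the Gaussian, so $\Psi_{\alpha,\beta}^1$ is a convergent $t$-integral of such expressions and is smooth away from the origin with controlled decay. In the twisted convolution $f\ast_{-\lambda}\Psi_{\alpha,\beta}^\lambda(z) = \int_{\C^n} f(z-w)\,\lambda^n\Psi_{\alpha,\beta}^1(\sqrt\lambda\,w)\,e^{-\frac{i\lambda}{2}\Im(z\cdot\bar w)}\,dw$, substitute $w\mapsto w/\sqrt\lambda$ to get $\int_{\C^n} f\big(z-\tfrac{w}{\sqrt\lambda}\big)\,\Psi_{\alpha,\beta}^1(w)\,e^{-\frac{i}{2}\sqrt\lambda\,\Im(z\cdot\bar w)}\,dw$. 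This is not immediately convergent as $\lambda\to0^+$ because $f(z-w/\sqrt\lambda)\to0$ pointwise; the correct renormalisation comes from recognising that $\Psi_{\alpha,\beta}^1$ is \emph{not} integrable — it is a Calderón–Zygmund-type kernel homogeneous of degree $-(2n)$ plus lower-order corrections — and that the twisted convolution must be interpreted via its principal-value/Fourier-transform meaning. I would instead argue on the Fourier (Weyl-transform) side: by the Hecke–Bochner / Weyl-correspondence identities recalled in the excerpt, $Z(-\lambda)^\alpha\overline Z(-\lambda)^\beta L(-\lambda)^{-s}$ has "Weyl multiplier" $(A^\ast(\lambda))^{\#}(A(\lambda))^{\#}H(\lambda)^{-s}$ acting on the $\lambda$-Weyl-transform side, and as $\lambda\to0^+$ the scaled Weyl transform $W_\lambda$ degenerates into the Euclidean Fourier transform on $\R^{2n}$ — this is the standard "$\lambda\to0$ contraction of the Heisenberg group to $\R^{2n}$". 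Under this contraction $A_j(\lambda) = \partial_{x_j}+\lambda x_j$ and $H(\lambda) = -\Delta+\lambda^2|x|^2$ scale so that the combined symbol converges to the Euclidean symbol $(\xi_1 \mp i\xi_2)^{\#}|\xi|^{-2s}$, which is exactly the Fourier multiplier of $(\partial_x-i\partial_y)^\alpha(\partial_x+i\partial_y)^\beta(-\Delta_{\C^n})^{-s}$.

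Concretely, rather than invoking the abstract contraction I would run the argument through the kernel using the integral representation of $L(\lambda)^{-s}$ directly at general $\lambda$: the kernel $K^s_\lambda$ of $L(\lambda)^{-s}$ is $\frac{1}{\Gamma(s)}\int_0^\infty t^{s-1}p_t^\lambda\,dt$ with $p_t^\lambda(z) = (4\pi)^{-n}\big(\tfrac{\lambda}{\sinh(t\lambda)}\big)^n e^{-\frac{\lambda}{4}(\coth t\lambda)|z|^2}$; substituting $u = t\lambda$ shows $p_t^\lambda(z) = \lambda^n p_u^1(\sqrt\lambda\,z)$ up to the $dt = du/\lambda$ Jacobian, and as $\lambda\to0^+$, $p_t^\lambda(z)\to(4\pi t)^{-n}e^{-\frac{|z|^2}{4t}}$, the Euclidean heat kernel on $\R^{2n}$, uniformly on compact sets; applying the first-order operators $Z_j(-\lambda)\to\partial_{z_j}$ and noting $\overline{z}_j$-terms carry a factor $\lambda$ that kills them, one gets $Z(-\lambda)^\alpha\overline Z(-\lambda)^\beta p_t^\lambda(z)\to(\partial_z)^\alpha(\partial_{\bar z})^\beta(4\pi t)^{-n}e^{-|z|^2/4t}$. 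Then for a Schwartz $f$, one writes $Z(-\lambda)^\alpha\overline Z(-\lambda)^\beta L(-\lambda)^{-s}f(z) = \int_0^\infty\frac{t^{s-1}}{\Gamma(s)}\big[\big(Z(-\lambda)^\alpha\overline Z(-\lambda)^\beta p_t^{-\lambda}\big)\ast_{-\lambda}f\big](z)\,dt$, passes the limit under the $t$-integral using uniform-in-$\lambda$ Gaussian domination near $t=0$ and the Schwartz decay of $f$ near $t=\infty$, and inside the twisted convolution uses that $e^{\frac{i\lambda}{2}\Im(\cdot)}\to1$ locally uniformly together with the decay estimates to justify dominated convergence; the resulting limit is exactly $\int_0^\infty\frac{t^{s-1}}{\Gamma(s)}\big[(\partial_z)^\alpha(\partial_{\bar z})^\beta G_t\big]\ast f$ with $G_t$ the Euclidean heat kernel, which is $C_{\alpha,\beta}(\partial_x-i\partial_y)^\alpha(\partial_x+i\partial_y)^\beta(-\Delta_{\C^n})^{-s}f(z)$, the factor $C_{\alpha,\beta}$ accounting for the normalisations in $\partial_{z_j} = \tfrac12(\partial_{x_j}-i\partial_{y_j})$ and in the various constants. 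The main obstacle I anticipate is purely technical: producing a single $\lambda$-independent integrable majorant for $t^{s-1}\,\big|Z(-\lambda)^\alpha\overline Z(-\lambda)^\beta p_t^{-\lambda}\ast_{-\lambda}f\big|$ valid for all small $t$ (where the kernel is singular like $t^{-n}$ times a polynomial, and the exponent $s = (|\alpha|+|\beta|)/2$ must be matched against the polynomial degree to ensure the $t$-integral converges absolutely after applying the derivatives to $f$ rather than the kernel) — this forces an integration-by-parts step moving the $\overline Z(-\lambda)$'s onto $f$, and keeping the error terms (which produce extra factors of $\lambda$, hence vanish) organised is where the bookkeeping is heaviest.
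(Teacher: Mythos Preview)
Your approach is essentially correct and would work, but it takes a heavier physical-space route than the paper does. Both you and the paper begin from the heat-semigroup representation $L(-\lambda)^{-s}f = \frac{1}{\Gamma(s)}\int_0^\infty t^{s-1}\,f\ast_{-\lambda}p_t^{-\lambda}\,dt$, and both use that the zeroth-order parts of $Z_j(-\lambda),\overline Z_j(-\lambda)$ carry an extra factor of $\lambda$ and therefore vanish in the limit. The divergence is in how the limit is justified. You keep the twisted convolution in $z$-space, apply the $Z,\overline Z$ derivatives to the heat kernel, and then face exactly the obstacle you identify: the differentiated Gaussian $Z(-\lambda)^\alpha\overline Z(-\lambda)^\beta p_t^{-\lambda}$ has a bad $t\to 0$ singularity (of order $t^{-n-(|\alpha|+|\beta|)/2}$), so a uniform majorant requires an integration-by-parts step to transfer derivatives onto $f$, with attendant bookkeeping of $\lambda$-small commutator terms. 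The paper bypasses this entirely by applying Parseval for the \emph{Euclidean} Fourier transform in the convolution variable $w$ before differentiating: the Gaussian $e^{-\frac{\lambda}{4}\coth(\lambda t)|z-w|^2}$ becomes $C\,(\cosh\lambda t)^{-n}e^{-\frac{1}{\lambda}\tanh(\lambda t)|\zeta|^2}$, which is bounded by $1$ uniformly in $(\lambda,t)$, and the twisted phase $e^{-\frac{i\lambda}{2}\Im(z\cdot\bar w)}$ becomes a shift $\widehat f(\zeta-\tfrac{i\lambda z}{2})$ of the Schwartz function $\widehat f$. On this Fourier side the integrand is dominated by a fixed Schwartz majorant, so differentiation under the integral is immediate; the $Z_j(-\lambda),\overline Z_j(-\lambda)$ hit $e^{i(\xi,\eta)\cdot(x,y)}$ to produce the Euclidean symbol $(\xi-i\eta)^\alpha(\xi+i\eta)^\beta$, while derivatives landing on $\widehat f(\zeta-\tfrac{i\lambda z}{2})$ carry a factor of $\lambda$ and die. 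Finally the $t$-integral $\int_0^\infty t^{s-1}(\cosh\lambda t)^{-n}e^{-\frac{1}{\lambda}\tanh(\lambda t)|\zeta|^2}\,dt\to\Gamma(s)|\zeta|^{-2s}$ by dominated convergence, giving the Euclidean Riesz multiplier directly.

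In short: your plan is sound but the Parseval step (ordinary Fourier in the integration variable, not the full Weyl-transform contraction you allude to) is the shortcut you are missing; it converts the Calder\'on--Zygmund difficulty you flag into a trivially dominated integral, and the proof becomes a few lines.
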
 
\begin{proof} 
We let $ s = (|\alpha| + |\beta|)/2 $ and express $ L(-\lambda)^{-s} $ in terms of the associated heat kernel as
\begin{align*}
L(-\lambda)^{-s}f(z) = & \frac{ \lambda^{n}}{ 4^n \pi^n \Gamma(s)}  \int_{0}^{\infty} \int_{\mathbb{C}^{n}} t^{s-1} (\sinh (\lambda t))^{-n} e^{-\frac{1}{4} \lambda \coth (\lambda t)|z-w|^{2}} e^{- i \frac{\lambda}{2} \Im(z\cdot\overline{w})}  f(w) \, dw\, dt.
\end{align*}
Applying Parseval's identity for the Euclidean Fourier transform in $w$-variable of the above expression, we see  that $L(-\lambda)^{-s} f(z)$ is a constant multiple of 
\begin{align} \label{L-negative-dil}
\int_{0}^{\infty} \int_{\mathbb{C}^{n}}t^{s-1} (\cosh (\lambda t))^{-n} e^{- \frac{1}{\lambda} \tanh(\lambda t)|\zeta|^{2}} \widehat{f} \left( \zeta - \frac{i \lambda z}{2} \right) e^{i(\xi,\eta)\cdot(x,y)} \, d\zeta \, dt. 
\end{align}
As $ f $ is a Schwartz function the above integrals converge absolutely and hence we can take the derivatives under the integral sign. We observe that any derivative falling on  
$\widehat{f} \left( \zeta - \frac{i \lambda z}{2} \right)$ brings out a factor of $ \lambda $ which disappears in the limit. Therefore, what matters is when the derivatives fall on the exponential function. As
$$ \lim_{\lambda \rightarrow 0} \frac{1}{\Gamma(s)} \int_{0}^{\infty} t^{s-1} (\cosh (\lambda t))^{-n} e^{- \frac{1}{\lambda} \tanh(\lambda t)|\zeta|^{2}} \, dt = |\zeta|^{-2s} $$ 
we see that
$ Z(-\lambda)^\alpha \overline{Z}(-\lambda)^\beta L(-\lambda)^{-s} f(z) $ converges (upto a constant multiple) to 
$$\int_{\mathbb{R}^{2n}} (\xi - i \eta)^{\alpha} (\xi + i \eta)^{\beta} (|\xi|^2+|\eta|^2)^{-s} \widehat{f}(\xi,\eta) e^{i(\xi,\eta) \cdot (x, y)} \, d\xi \, d\eta$$ 
which is a constant multiple of 
$$\left(\frac{\partial}{\partial x} - i \frac{\partial}{\partial y} \right)^{\alpha} \left(\frac{\partial}{\partial x} + i \frac{\partial}{\partial y} \right)^{\beta} \left( -\Delta_{\C^n} \right)^{-s} f(z).$$

This completes the proof of Lemma \ref{conv-scaled-riesz}. 
\end{proof}

\medskip 
\begin{proof}[Proof of Theorem \ref{Main-Weyl}] 
For the sake of simplicity of notation we give the proof when $ N =1.$ Suppose that a bounded linear operator $ M $ which is homogeneous of degree zero defines a Weyl multiplier on  the Sobolev space $ W_L^{1,1}(\C^n).$ We will show that  for any non trivial $ \delta = \delta_{a,b} \in \widehat{K_0} $ the operator $ M^\delta = 0.$  In view of Lemma \ref{Weyl-rot} we know that $ M^\delta $ also defines a Weyl multiplier on $ W_L^{1,1}(\C^n).$ Appealing to Lemmas \ref{lem-M-delta-hom}, \ref{lem-piece-transfer} and \ref{lem-solid-harmonic-equiv} we conclude that the Riesz transforms $ R_{jk} $ corresponding to the multipliers $  A_{k}^{b} (A_j^\ast)^a H^{-(a+b)/2}$ are bounded on $ W_L^{1,1}(\C^n).$ Using Lemma \ref{conv-scaled-riesz} and Theorem \ref{thm-BP}, we will show that this is not possible unless $ M^\delta =0.$ This will then prove our theorem.

We make use of the following transformation properties of $ Z_j(\lambda), \overline{Z}_j (\lambda) $ and $ L(\lambda) $ under dilations. For $ \lambda > 0 $ let us set $ f_\lambda(z) = f(\lambda^{-1/2}z).$
A simple calculation shows that
$$ \sqrt{\lambda} (Z_j f_\lambda)(\sqrt{\lambda}z) = Z_j(\lambda)f(z),\,\,\sqrt{\lambda} (\overline{Z}_j  f_\lambda)(\sqrt{\lambda}z) = \overline{Z}_j (\lambda)f(z),$$
and hence $ \lambda (L f_\lambda)(\sqrt{\lambda}z) = L(\lambda)f(z).$ As $ L^{-s} $ is defined in terms of the semigroup $ e^{-tL} $ we can also verify that
$$ \lambda^{-(a+b)/2}(L^{-(a+b)/2}f_\lambda)(\sqrt{\lambda} z) =  L(\lambda)^{-(a+b)/2}f(z).$$
Such relations are also valid for the right invariant counterparts $ Z_j(-\lambda) = Z_j^R(\lambda)$ etc. These properties have the implication that the Riesz transform $ R_{jk} $ satisfies 
$$ (R_{jk}f_\lambda)(\sqrt{\lambda} z) =  R_{jk}(\lambda)f(z).$$
Using this along with Lemma \ref{conv-scaled-riesz} we will be able to prove Theorem \ref{Main-Weyl}.

As we are assuming that $ R_{jk} $ is bounded on $ \mathring{W}_L^{1,1}(\C^n) $ ( $\cong W_L^{1,1}(\C^n) $ in view of  Theorem \ref{thm-Weyl-norm-equiv}) we have the following inequality
$$ \sum_{i=1}^n  \int_{\C^n} \left(  | Z_i R_{j,k}f(z) \,\varphi_i(z)| + | \overline{Z}_iR_{jk}f(z) \, \psi_i(z)| \right)  dz \leq C \|f\|_{\mathring{W}_L^{1,1}}  \|g\|_\infty $$
for any sequence $ g =(\varphi_i, \psi_i) $ with $ \|g\|_\infty = \max_{1\leq i\leq n} \{ \|\varphi_i\|_\infty, \|\psi_i\|_\infty\}.$ In the above inequality, we replace $ f $ by $f_\lambda$, $g $ by $ g_\lambda $ and make a 
change of variables to get
$$ \sum_{i=1}^n  \int_{\C^n} \left(  | Z_i(\lambda) R_{j,k}(\lambda)f(z) \,\varphi_i(z)| + | \overline{Z}_i(\lambda)R_{jk}(\lambda)f(z) \, \psi_i(z)| \right)  dz \leq C \|f\|_{\mathring{W}_{L(\lambda)}^{1,1}}  \|g\|_\infty .$$
In obtaining the above we have made use of the relation $ \| f_\lambda\|_{\mathring{W}_L^{1,1}} = \lambda^{n-1/2} \| f\|_{\mathring{W}_{L(\lambda)}^{1,1}}.$  A simple calculation shows that as $ \lambda $ goes to zero, $\|f\|_{\mathring{W}_{L(\lambda)}^{1,1}} $ converges to $\|f\|_{\mathring{W}^{1,1}}$ where $ \mathring{W}^{1,1} $ is the homogeneous Sobolev space on $ \C^n.$ We now take limit as $ \lambda $ decreases to zero. By Fatou's lemma, the result proved in Lemma \ref{conv-scaled-riesz} allows us to conclude
$$\sum_{i=1}^n \int_{\C^n} \left( \left| \frac{\partial}{\partial z_i} T_{j,k} f(z) \, \varphi_i(z) \right| + \left| \frac{\partial}{\partial \bar{z}_i} T_{j,k} f(z) \, \psi_i(z) \right| \right) dz \leq C \|f\|_{\mathring{W}^{1,1}}  \|g\|_\infty.$$
where $ T_{j,k} $ stands for the Euclidean Riesz transform  which corresponds to the Fourier multiplier
$$ m_{j,k}(\xi,\eta) = (\xi_k-i\eta_k)^b(\xi_j+i\eta_j)^a (|\xi|^2+|\eta|^2)^{-(a+b)/2}.$$
The above inequality allows us to conclude that $ T_{j,k} $ is bounded on the homogeneous Sobolev space $ \mathring{W}^{1,1}(\C^n) .$ But now the multiplier $ m_{j,k}(\xi,\eta) $ associated to $ T_{j,k} $ is homogeneous of degree zero and hence by Theorem \ref{thm-BP}, $ T_{j,k} $ cannot be bounded on $ \mathring{W}^{1,1}(\C^n)$ unless $a=b=0$. This contradiction proves that $ M^\delta = 0 $ for any non trivial $ \delta \in \widehat{K_0}.$ Consequently, $ M = c I $ which proves the theorem.
\end{proof}


\section{Fourier multipliers on the Heisenberg group} \label{sec-mult-Heisenberg-proof} 
In this section our main goal is to prove Theorem \ref{thm2} stated in the introduction. Recall that we are considering left Fourier multipliers defined by the relation $ \widehat{T_mf}(\lambda) = m(\lambda) \widehat{f}(\lambda) $ for $ f \in L^2(\He^n)$ where $ m(\lambda) $ is a bounded function on $ \R^\ast $ taking values in $ B(L^2(\R^n)).$ Recall that left multipliers are invariant under right translations and they are of the form $ S \ast f $ where $ S $ is a pseudo-measure.

\subsection{Riesz transforms on Sobolev spaces} \label{subsec-Riesz-Sob} 
The prototype of Fourier multipliers that are homogeneous of degree zero are provided by the Riesz transforms. So we take them at first and see if they are bounded on homogeneous or non-homogeneous Sobolev spaces. In this section we show that the Riesz transforms on the Heisenberg group are not bounded on homogeneous or non-homogeneous Sobolev spaces. Recall that for any bigraded solid harmonic $ P $ of bi-degree $ (a,b) $ the Riesz transform $ R_P $ is defined initially on $ L^2(\He^n) $ by the relation
$$ \widehat{R_Pf}(\lambda) = G_\lambda(P)H(\lambda)^{-(a+b)/2} \widehat{f}(\lambda).$$
Note that $ R_P $ are left Fourier multipliers on the Heisenberg group. Our Sobolev spaces $ W^{N,1}(\He^n) $ and $\mathring{W}^{N,1}(\He^n)$ are defined in terms of the left invariant vector fields $ X_j, Y_j $ and hence we have the relations $ X_j R_Pf = R_P(X_jf)$ and $Y_jR_P(f) = R_P(Y_jf) $ for any $ j = 1, 2, \ldots, n.$ These are known to be singular integral operators with kernels $ K_P $ that are homogeneous of degree $ -(2n+2) $  in the sense that $ K_P(\delta_rg) = r^{-(2n+2)} K_P(g) $ where for $ r > 0 $ the non-isotropic dilations are given by $ \delta_r(z,t) = (rz,r^2t).$ In the case of first order Riesz transforms, we use the notation $ R_j f =\tilde{X}_j \tilde{\mathcal{L}}^{-1/2} f $ and $ R_{j+n}f =\tilde{Y}_j \tilde{\mathcal{L}}^{-1/2} f $ for $ j = 1, 2, \ldots, n.$  It is more informative to study a one parameter family of Riesz transforms $ R_j^\varepsilon $ indexed by $ \varepsilon > 0 $ and investigate their behaviour as $ \varepsilon \rightarrow 0.$ These Riesz transforms are associated to sublaplacians $ \tilde{\mathcal{L}}^\varepsilon $ on certain Lie groups $ \He_\varepsilon^n$ which are isomorphic to $ \He^n.$\\

The group $ \He_\varepsilon^n $ is just $ \C^n \times \R $ as a manifold equipped with the group structure
$$ (z,t)(w,s) = (z+w, t+s+\frac{\varepsilon}{2} \Im(z \cdot \bar{w})). $$
Note that $ \He_1^n = \He^n $ and $ \He_0^n$ is the abelian group $\C^n \times \R .$  For each $ \varepsilon > 0 $ the group $ \Hep$ is isomorphic to $ \He^n.$ Indeed, 
$ \varphi_\varepsilon : \He^n \rightarrow \Hep $ given by $ \varphi_\varepsilon(z,t)= (\varepsilon^{-1/2}z,t)$ is an isomorphism. As a Haar measure on $ \Hep $ we take the Lebesgue measure $ dz \, dt .$ Denoting the convolution of two functions $ f $ and $ g $ on $ \Hep $ by $ f\ast_\varepsilon g $ we note that
$$  (f \ast_\vep g) \circ \varphi_\vep =  \vep^{-n} (f \circ \varphi_\vep)\ast (g \circ \varphi_\vep).$$
From the above we make the useful observation that
$$ \vep^{-n} (f \circ \varphi_\vep)\ast (g \circ \varphi_\vep)(\sqrt{\vep}z,t) \rightarrow  f\ast_0 g(z,t) $$
as $\epsilon$ goes to $0$, where $ \ast_0 $ stands for the standard convolution on $ \C^n \times \R.$  We can also easily find out the left (right) invariant vector fields on these groups.\\

Let $ X_j^\vep, Y_j^\vep $ stand for the left invariant vector fields on $ \Hep $ associated to the standard subgroups $ \Gamma_j = \{ (se_j,0): s \in \R \} $ and $ \Gamma_{j+n} = \{ (ise_j,0): s \in \R \} .$ Then a simple calculations shows that
\begin{equation} \label{rel-1} 
(X_j^\vep f)\circ \varphi_\vep = \sqrt{\vep} X_j (f \circ \varphi_\vep), \quad (Y_j^\vep f)\circ \varphi_\vep = \sqrt{\vep}  Y_j (f \circ \varphi_\vep) 
\end{equation}
and a similar relation holds for the right invariant vector fields $ \tilde{X}_j^\vep, \tilde{Y}_j^\vep $. More explicitly, we have
\begin{equation} \label{vfiel} 
X_j^\vep f(z,t) = \frac{\partial}{\partial x_j}+ \frac{\vep}{2}y_j \frac{\partial}{\partial t}, \quad Y_j^\vep f(z,t) = \frac{\partial}{\partial y_j} - \frac{\vep}{2}x_j \frac{\partial}{\partial t}. 
\end{equation}
From these expressions it is clear that these vector fields converge to the standard partial derivatives $ \frac{\partial}{\partial x_j} $ and $ \frac{\partial}{\partial y_j } $ as expected.
Along with these, we also consider the right invariant vector fields $ \tilde{X}_j^\vep, \tilde{Y}_j^\vep $ bearing similar relations with the right invariant vectors fields $ \tilde{X}_j, \tilde{Y}_j $ on $ \He^n.$ We let $ \tilde{\mathcal{L}}_\vep = - \sum_{j=1}^n ( (\tilde{X}_j^\vep)^2+ (\tilde{Y}_j^\vep)^2) $ stand for the right invariant sublaplacian on $ \Hep $ which is related to $ \tilde{\mathcal{L}} $ by
$$  (\tilde{\mathcal{L}}_\vep f)\circ \varphi_\vep =  \vep \tilde{\mathcal{L}}(f\circ \varphi_\vep).$$
This translates into the following relation between the semigroups generated by them
\begin{equation} \label{rel-heat} 
(e^{-t \tilde{\mathcal{L}}_\vep} f)\circ \varphi_\vep = e^{-t \vep \tilde{\mathcal{L}}}(f\circ \varphi_\vep). 
\end{equation}
Consequently, we also have similar relations for fractional powers and Riesz transforms, e.g. $ (R_j^\vep f)\circ \varphi_\vep = R_j(f\circ \varphi_\vep) $ for any $ j.$ We make use of these relations in what follows.\\

For $ j =1, 2, \ldots,n $, we define $ R_j^0 = \frac{\partial}{\partial x_j} (-\Delta_{\C^n})^{-1/2} $ and $ R_{j+n}^0 = \frac{\partial}{\partial y_j} (-\Delta_{\C^n})^{-1/2} $ which are the Riesz transforms on $ \C^n.$  We treat them as operators acting on $ L^2(\He^n).$ We would like to prove that $ R_j^\vep $ converge to $ R_j^0 $ as $ \vep $ goes to zero in a suitable sense by making use of the fact that convolution on $ \Hep $ converges to the Euclidean convolution on $ \C^n \times \R.$ As the Riesz transforms are principal value convolutions, we consider the operators $ \tilde{X}_j^\vep (\gamma^2+\tilde{\mathcal{L}}_\vep)^{-1/2}$ and $ \tilde{Y}_j^\vep (\gamma^2+\tilde{\mathcal{L}}_\vep)^{-1/2}$ where $ \gamma > 0 $ and study their convergence as $ \vep $ goes to zero. We first consider the operators  $  (\gamma^2+\tilde{\mathcal{L}}_\vep)^{-1/2}.$

\begin{proposition} \label{fractional-sublap-convergence}
Let $ \gamma >0.$ Then $ (\gamma^2+\tilde{\mathcal{L}}_\vep)^{-1/2}$ are uniformly bounded on $ L^p(\He^n) $ for any $ 1 \leq p < \infty $ and converge to $  (\gamma^2-\Delta_{\C^n})^{-1/2} $ in the norm as $ \vep $ goes to zero. Moreover, when $ f $ is a Schwartz function we also have the pointwise convergence.
\end{proposition}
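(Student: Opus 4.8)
The plan is to realise $(\gamma^2+\tilde{\mathcal{L}}_\vep)^{-1/2}$ as a left convolution operator on $\He_\vep^n$ with an explicit kernel and to follow that kernel as $\vep\to 0$. By subordination, $(\gamma^2+\tilde{\mathcal{L}}_\vep)^{-1/2}=\pi^{-1/2}\int_0^\infty t^{-1/2}e^{-\gamma^2 t}\,e^{-t\tilde{\mathcal{L}}_\vep}\,dt$, and since $\tilde{\mathcal{L}}_\vep$ is right-invariant, $e^{-t\tilde{\mathcal{L}}_\vep}f=p_t^\vep\ast_\vep f$ for a heat kernel $p_t^\vep$, so the operator is $f\mapsto G_\gamma^\vep\ast_\vep f$ with $G_\gamma^\vep=\pi^{-1/2}\int_0^\infty t^{-1/2}e^{-\gamma^2 t}p_t^\vep\,dt$. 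The first step is to record the scaling identity $p_t^\vep(w,r)=\vep^{-1}p_t(w,r/\vep)$, which follows from \eqref{rel-heat} together with the explicit formula \eqref{heat} for $p_t$ after the change of variable $\mu=\vep\lambda$ (equivalently, from the behaviour of $p_t$ under the non-isotropic dilations $\delta_r$). Hence $G_\gamma^\vep(w,r)=\vep^{-1}G_\gamma(w,r/\vep)$, where $G_\gamma=\pi^{-1/2}\int_0^\infty t^{-1/2}e^{-\gamma^2 t}p_t\,dt\ge 0$ is the convolution kernel of $(\gamma^2+\tilde{\mathcal{L}})^{-1/2}$ on $\He^n$; since $\int_{\He^n}p_t=1$ one gets $\|G_\gamma\|_{L^1(\He^n)}=\gamma^{-1}$.

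For the uniform boundedness, the change of variable $r\mapsto\vep r$ gives $\|G_\gamma^\vep\|_{L^1(\He^n)}=\|G_\gamma\|_{L^1(\He^n)}=\gamma^{-1}$ for every $\vep>0$; since the Haar measure of each $\He_\vep^n$ is the fixed Lebesgue measure on $\C^n\times\R$ and $\He_\vep^n$ is unimodular, Young's inequality yields $\|(\gamma^2+\tilde{\mathcal{L}}_\vep)^{-1/2}\|_{L^p\to L^p}\le\gamma^{-1}$ for all $1\le p\le\infty$, uniformly in $\vep>0$. For $\vep=0$ the operator $(\gamma^2-\Delta_{\C^n})^{-1/2}$ is convolution in the $z$-variable alone with the $2n$-dimensional Bessel kernel $G_\gamma^0(w)=\int_\R G_\gamma(w,r)\,dr$, of $L^1(\C^n)$-mass $\gamma^{-1}$, so the same bound persists.

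For the convergence, writing the left convolution on $\He_\vep^n$ explicitly and substituting $r=\vep\rho$ gives, for $f$ a Schwartz function,
$$(\gamma^2+\tilde{\mathcal{L}}_\vep)^{-1/2}f(z,s)=\int_{\C^n}\int_{\R}G_\gamma(u,\rho)\,f\!\big(z-u,\;s-\vep\rho-\tfrac{\vep}{2}\,\Im(z\cdot\bar u)\big)\,d\rho\,du,$$
whereas $(\gamma^2-\Delta_{\C^n})^{-1/2}f(z,s)=\int_{\C^n}\int_{\R}G_\gamma(u,\rho)\,f(z-u,s)\,d\rho\,du$. The pointwise convergence for Schwartz $f$ is obtained by splitting the $(u,\rho)$-integral over a large ball (where the shift in the central variable tends to $0$ uniformly, so the integrand converges uniformly) and its complement (whose contribution is $\le\|f\|_\infty\int_{|(u,\rho)|>R}G_\gamma(u,\rho)\,du\,d\rho$, small since $G_\gamma\in L^1$). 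For the norm convergence I would first prove it in $L^1(\He^n)$: subtracting the two displays and passing the $L^1$-norm inside, the difference is at most $\int_{\C^n}\int_{\R}G_\gamma(u,\rho)\,\omega_\vep(u,\rho)\,d\rho\,du$, where $\omega_\vep(u,\rho)$ is the $L^1(\He^n)$-distance between $f$ and the function obtained from it by shifting $z$ by $u$ and the central variable $s$ by $\vep\rho+\tfrac{\vep}{2}\Im(z\cdot\bar u)$; this is $\le 2\|f\|_1$ and tends to $0$ for each $(u,\rho)$ by continuity of translations in $L^1(\R)$ applied on each slice in $z$ (the shift amount depends on $z$ but still tends to $0$) followed by dominated convergence in $z$, so the dominated convergence theorem, using $G_\gamma\in L^1$, gives the $L^1$-convergence. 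Interpolating $\|\cdot\|_{L^p}$ between $\|\cdot\|_{L^1}$ (which tends to $0$) and $\|\cdot\|_{L^q}$ for some $q>p$ (uniformly bounded by the previous step) upgrades this to $L^p$-convergence for Schwartz $f$, $1\le p<\infty$; the uniform bound together with the density of Schwartz functions in $L^p(\He^n)$ then extends it to all of $L^p$, which is the asserted convergence in the norm.

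The principal technical point is the scaling identity $p_t^\vep(w,r)=\vep^{-1}p_t(w,r/\vep)$: correctly bookkeeping how the explicit heat kernel \eqref{heat} transforms under the isomorphism $\varphi_\vep$ and the accompanying rescaling of the central variable is the one computation that must be done with care, after which everything reduces to a standard approximate-identity and continuity-of-translation argument. A secondary subtlety is that in the $L^1$-step the central-variable shift depends both on the integration variables and on the point $z$, so the reduction to continuity of translations has to be carried out slice-by-slice in $z$ with a $z$-dependent shift; this is harmless because for each fixed $z$ it is a genuine translation on $\R$ converging to $0$.
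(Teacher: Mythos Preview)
Your proof is correct and follows essentially the same approach as the paper. Both arguments reduce to the key integral representation
\[
(\gamma^2+\tilde{\mathcal{L}}_\vep)^{-1/2}f(z,t)=\int_{\He^n}K_\gamma(w,s)\,f\!\big(z-w,\;t-\vep s-\tfrac{\vep}{2}\Im(z\cdot\bar w)\big)\,dw\,ds,
\]
after which uniform boundedness, pointwise convergence, and norm convergence all follow from the integrability of $K_\gamma$ together with continuity of translations and dominated convergence. The only differences are cosmetic: you obtain the formula via the direct scaling identity $p_t^\vep(w,r)=\vep^{-1}p_t(w,r/\vep)$ (which is indeed equivalent to the paper's route through $\varphi_\vep$ and the non-isotropic dilation of $p_t$), and you pass from $L^1$- to $L^p$-convergence by interpolation with the uniform $L^q$-bound, whereas the paper applies Minkowski's inequality directly in $L^p$.
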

\begin{proof} 
Let $ K_{\gamma,\vep} $ be the convolution kernel of $ (\gamma^2+\tilde{\mathcal{L}}_\vep)^{-1/2}$ which can be expressed in terms of the heat kernel $ p_{\eta ,\vep} $ associated to $ \tilde{\mathcal{L}}_\vep$ as follows:
$$ K_{\gamma,\vep}(z,t) = \frac{1}{\sqrt{\pi}} \int_0^\infty \eta^{-1/2} e^{-\eta \gamma^2} p_{\eta ,\vep}(z,t) \, d\eta.$$
When $ \vep = 1$ we simply write $ K_\gamma $ for the kernel of $ (\gamma^2+\tilde{\mathcal{L}})^{-1/2}.$ The relation \eqref{rel-heat} between the semigroups allows us to conclude that
$$  (p_{\eta ,\vep} \ast_\vep f)\circ \varphi_\vep = p_{\eta \vep} \ast (f \circ \varphi_\vep)$$ and consequently, we have the relation
\begin{equation} \label{rel-2} 
(K_{\gamma,\vep} \ast_\vep f)\circ \varphi_\vep = \vep^{-1/2}  K_{\gamma \vep^{-1/2}} \ast (f \circ \varphi_\vep). 
\end{equation}
Now a simple calculation shows that 
$ (K_{\gamma \vep^{-1/2}} \ast (f \circ \varphi_\vep))\circ \varphi_\vep^{-1}(z,t) $ is given by the integral
$$ \vep^{n+1} \int_{\He^n}  K_{\gamma \vep^{-1/2}}(\sqrt{\vep}w,\vep s) f(z-w, t-\vep s - \frac{\vep}{2} \Im(z \cdot \bar{w}) ) \, dw \, ds.$$
We now make use of the homogeneity property of the heat kernel $ p_\eta (z,t) $ under the non-isotropic dilations $ \delta_r(z,t) = (rz,r^2t).$ From the explicit formula (\ref{heat}) we see that
$ p_\eta (\sqrt{\vep}z,\vep t) = \vep^{-n-1} p_{\eta /\vep} (z,t) $ and hence
$$ K_{\gamma \vep^{-1/2}}(\sqrt{\vep}w,\vep s) =  \vep^{-n-1} \frac{1}{\sqrt{\pi}} \int_0^\infty \eta^{-1/2} e^{-\gamma^2 (\eta / \vep)} p_{\eta /\vep}(w,s) \, d\eta = \vep^{-n-1/2} K_\gamma(w,s).$$
Making use of this we finally arrive at the relation
\begin{equation}\label{con} K_{\gamma,\vep} \ast _\vep f(z,t) = \int_{\He^n}  K_{\gamma}(w,s) f(z-w, t- \vep s - \frac{\vep}{2} \Im(z \cdot \bar{w}) ) \, dw \, ds. 
\end{equation}

Now, the kernel $ K_\gamma $ is positive, integrable on $ \He^n $ and we have
$$ K_\gamma^\lambda(w) = C_n \int_0^\infty \eta^{-1/2} e^{- \eta \gamma^2} \left( \frac{\lambda}{\sinh(\lambda \eta)}\right)^n e^{-\frac{1}{4}\lambda (\coth(\eta \lambda))|w|^2} \, d\eta.$$
In particular, the integral of $ K_\gamma $ in the central variable is given by
$$ K_\gamma^0(w)  = C_n \int_0^\infty \eta^{-n-1/2} e^{- \eta \gamma^2} e^{-\frac{1}{4 \eta} |w|^2} \, d\eta $$
which can be calculated explicitly in terms of the Macdonald function. For our purposes it suffices to note that
$$ \int_{\C^n} e^{-i\Re(z\cdot \bar{w})} K_\gamma^0(w) \, dw  = C_n (\gamma^2+|z|^2)^{-1/2} $$ 
which shows that $ K_\gamma^0(w)$ is a constant multiple of the kernel of $ (\gamma^2-\Delta_{\C^n})^{-1/2}.$ Let us denote by $  g \ast_{\C^n} f $ the convolution of $ g $ with $ f $ in the $ \C^n $-variable. Returning to equation \eqref{con} we see that $K_{\gamma,\vep} \ast _\vep f(z,t) - K_\gamma^0 \ast_{\C^n} f(z,t) $ is given by
$$ \int_{\He^n} K_{\gamma}(w,s) \left( f(z-w, t- \vep s - \frac{\vep}{2} \Im(z \cdot \bar{w})) - f(z-w,t) \right) dw \, ds.$$
It is now clear that when $ f $ is Schwartz, the above converges pointwise to zero as $\epsilon$ goes to $0$. By Minkowski's integral inequality we also have
$$ \| K_{\gamma,\vep} \ast_\vep f - K_\gamma^0 \ast_{\C^n} f \|_p \leq \int_{\He^n}  K_{\gamma}(w,s) F(\vep w,\vep s) \, dw \, ds$$
where 
$$ F(w,s) = \left( \int_{\He^n} \left| f(z, t-s - \frac{1}{2} \Im(z\cdot \bar{w})) - f(z,t) \right|^p dz \, dt \right)^{1/p}. $$
In order to prove the norm convergence, we can further assume that $ f \in C_c^\infty(\He^n).$ Then, as a consequence of the uniform continuity of $ f $, we have 
$$\lim_{\epsilon \to 0} F(\epsilon w, \epsilon s) = \lim_{\epsilon \to 0} \left(\int_{\He^n} \left| f(z, t - \epsilon s - \frac{\epsilon}{2} \Im(z\cdot \bar{w})) - f(z,t) \right|^p dz \, dt \right)^{1/p} = 0. $$
Now, since $ F(\vep w,\vep s) \leq 2 \| f\|_p $ and $K_\gamma$ is integrable, it follows from the dominated convergence theorem that 
$$\lim_{\epsilon \to 0} \int_{\He^n}  K_{\gamma}(w,s) F(\vep w,\vep s) \, dw \, ds = 0. $$
This completes the proof of Proposition \ref{fractional-sublap-convergence}.
\end{proof}

The above arguments will fail in the case of the  operators $ \tilde{X}_j^\vep (\gamma^2+\tilde{\mathcal{L}}_\vep)^{-1/2} $ since the kernels of these are no longer integrable. However, as a corollary to the above proposition we can easily deduce the following result.

\begin{corollary} \label{schwarz-pointwise}
Let $ \gamma >0 $ and assume that $ f $ is a Schwartz class function  on $ \He^n.$ Then $ \tilde{X}_j^\vep (\gamma^2+\tilde{\mathcal{L}}_\vep)^{-1/2}f(z,t) $ converges pointwise to $ \frac{\partial}{\partial x_j}(\gamma^2-\Delta_{\C^n})^{-1/2}f(z,t) $ as $ \vep  \rightarrow 0.$ Same is true when  $ \tilde{X}_j^\vep$ are replaced by $ \tilde{Y}_j^\vep.$
\end{corollary}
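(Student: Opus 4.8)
The plan is to reduce the statement to the explicit integral representation \eqref{con} obtained in the proof of Proposition \ref{fractional-sublap-convergence} and then differentiate under the integral sign. Since $\tilde{\mathcal{L}}_\vep$ is right-invariant, $(\gamma^2+\tilde{\mathcal{L}}_\vep)^{-1/2}$ is left convolution with $K_{\gamma,\vep}$, so \eqref{con} says precisely that
$$(\gamma^2+\tilde{\mathcal{L}}_\vep)^{-1/2}f(z,t) = \int_{\He^n} K_\gamma(w,s)\, f\!\left(z-w,\ t-\vep s-\tfrac{\vep}{2}\Im(z\cdot\bar{w})\right) dw\, ds,$$
where $K_\gamma$ is the positive, integrable, and $\vep$-independent kernel of $(\gamma^2+\tilde{\mathcal{L}})^{-1/2}$ on $\He^n$. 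Here I would write $w=u+iv$ and set $\tau_\vep=\tau_\vep(z,t,w,s)=t-\vep s-\tfrac{\vep}{2}\Im(z\cdot\bar{w})$.

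Next I would apply the differential operator $\tilde{X}_j^\vep=\frac{\partial}{\partial x_j}-\frac{\vep}{2}y_j\frac{\partial}{\partial t}$, in the variables $(z,t)$, under the integral. The only point needing justification is the legitimacy of this, which follows from the Schwartz decay of $f$: for $(z,t)$ in a compact set the integrand and its relevant first-order derivatives are dominated by $K_\gamma(w,s)$ times a fixed integrable function of $(w,s)$. Using $\frac{\partial}{\partial x_j}\Im(z\cdot\bar{w})=-v_j$, the chain rule gives
$$\tilde{X}_j^\vep\Big[f(z-w,\tau_\vep)\Big] = (\partial_{x_j}f)(z-w,\tau_\vep) + \tfrac{\vep}{2}\,(v_j-y_j)\,(\partial_t f)(z-w,\tau_\vep),$$
hence
$$\tilde{X}_j^\vep(\gamma^2+\tilde{\mathcal{L}}_\vep)^{-1/2}f(z,t) = \int_{\He^n} K_\gamma(w,s)\Big[(\partial_{x_j}f)(z-w,\tau_\vep) + \tfrac{\vep}{2}(v_j-y_j)(\partial_t f)(z-w,\tau_\vep)\Big]dw\, ds.$$

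Then I would pass to the limit $\vep\to0$ by dominated convergence. As $\vep\to0$ one has $\tau_\vep\to t$, so the first term converges pointwise to $(\partial_{x_j}f)(z-w,t)$ and is dominated by $\|\partial_{x_j}f\|_\infty K_\gamma(w,s)$; the second term carries a factor $\vep$ and, since $\partial_t f$ is Schwartz, is dominated by $\vep\, C_z\, K_\gamma(w,s)$, so it tends to $0$. Therefore
$$\lim_{\vep\to0}\tilde{X}_j^\vep(\gamma^2+\tilde{\mathcal{L}}_\vep)^{-1/2}f(z,t) = \int_{\He^n} K_\gamma(w,s)\,(\partial_{x_j}f)(z-w,t)\, dw\, ds = K_\gamma^0 \ast_{\C^n}(\partial_{x_j}f)(z,t),$$
where $K_\gamma^0(w)=\int_\R K_\gamma(w,s)\,ds$. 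By the computation in the proof of Proposition \ref{fractional-sublap-convergence}, $K_\gamma^0$ is, with the same normalization, the convolution kernel of $(\gamma^2-\Delta_{\C^n})^{-1/2}$, and since $\partial_{x_j}$ commutes with convolution in the $\C^n$-variable this equals $\frac{\partial}{\partial x_j}(\gamma^2-\Delta_{\C^n})^{-1/2}f(z,t)$. The assertion for $\tilde{Y}_j^\vep$ is identical upon replacing $x_j$ by $y_j$ and using $\frac{\partial}{\partial y_j}\Im(z\cdot\bar{w})=u_j$.

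\textbf{Main obstacle.} The one step that is not entirely routine is controlling the error term $\frac{\vep}{2}(v_j-y_j)(\partial_t f)(z-w,\tau_\vep)$ uniformly in $\vep$: the factor $v_j$ is unbounded in $w$ whereas $K_\gamma$ is only integrable, not compactly supported. This is resolved by the rapid decay of the Schwartz function $\partial_t f$ in its $\C^n$-argument, via $|v_j|\le|z|+|z-w|$ and $(1+|z-w|)^{-N}\,|z-w|\le 1$, which bounds $|v_j|\,|(\partial_t f)(z-w,\tau_\vep)|$ by a constant depending only on $z$; the remaining factor $\vep$ then forces the limit of this term to vanish. Everything else is a standard application of differentiation under the integral sign and dominated convergence, using only the ingredients already assembled in the proof of Proposition \ref{fractional-sublap-convergence}.
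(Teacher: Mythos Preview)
Your proof is correct and is essentially the same argument as the paper's, carried out at the level of the explicit integral rather than at the level of convolution identities. The paper writes $\tilde{X}_j^\vep = X_j^\vep - \vep y_j\,\partial_t$, uses that the left-invariant field $X_j^\vep$ passes through the left convolution $K_{\gamma,\vep}\ast_\vep(\cdot)$ to the second factor, and then invokes Proposition~\ref{fractional-sublap-convergence} separately on each of the three resulting terms; the two terms carrying a factor $\vep$ vanish in the limit. Your chain-rule computation produces exactly the same three pieces (note that $v_j - y_j$ is minus the $j$-th imaginary coordinate of $z-w$, so your error term is $-\tfrac{\vep}{2}\,K_{\gamma,\vep}\ast_\vep(y_j\,\partial_t f)$ plus the paper's $-\vep y_j\,K_{\gamma,\vep}\ast_\vep(\partial_t f)$ recombined). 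The only place where the bookkeeping differs is the domination step: in the paper's formulation the functions $y_j\,\partial_t f$ and $\partial_t f$ are themselves Schwartz, so Proposition~\ref{fractional-sublap-convergence} applies as a black box with no need to bound $|v_j|$ separately; your route requires the extra observation $|v_j|\le |z|+|z-w|$, which you handle correctly.
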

\begin{proof} In view of the explicit formula (\ref{vfiel}) for the left invariant vector fields $ X_j^\vep $ and the corresponding formula for $\tilde{X}_j^\vep$ we have the relation $ \tilde{X}_j^\vep = 
X_j^\vep - \vep y_j \frac{\partial}{\partial t}.$ Therefore,
$$ \tilde{X}_j^\vep( K_{\gamma,\vep} \ast_\vep f)(z,t) = \left( K_{\gamma,\vep} \ast_\vep X_j^\vep f \right) (z,t) - \vep y_j \left( K_{\gamma,\vep}\ast_\vep \frac{\partial}{\partial t} f \right) (z,t) $$
where we have used the fact that for any left invariant vector field $ X $ on $ \He_\varepsilon^n $ one has $ X( g \ast_\vep h) = g \ast_\vep (Xh).$ Using the explicit formula (\ref{vfiel}) we see that 
$ \tilde{X}_j^\vep( K_{\gamma,\vep} \ast_\vep f)(z,t)$ is given by
$$ \left( K_{\gamma,\vep} \ast_\vep \frac{\partial}{\partial x_j} f \right) (z,t) + \frac{\vep}{2} \left( K_{\gamma,\vep} \ast_\vep y_j f \right) (z,t) - \vep y_j \left( K_{\gamma,\vep}\ast_\vep \frac{\partial}{\partial t} f \right) (z,t) .$$
The result then follows immediately from the above proposition.
\end{proof}

As we are interested in the Riesz transforms $ R_j^\vep $ we would like to obtain them as limits of the operators $ \tilde{X}_j^\vep (\gamma^2+\tilde{\mathcal{L}}_\vep)^{-1/2} $ as $ \gamma \rightarrow 0.$ The following lemma shows that this is true at least in $ L^2(\He^n).$

\begin{lemma} \label{lem-gamma-riesz-transform}
For any $ f \in L^2(\He^n) $ the functions $ \tilde{X}_j^\vep (\gamma^2+\tilde{\mathcal{L}}_\vep)^{-1/2} f$ converge to $ R_j^\vep f $ in $ L^2(\He^n) $ as $ \gamma \rightarrow 0.$ 
\end{lemma}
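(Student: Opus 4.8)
The plan is to exploit the spectral (functional) calculus of the positive self-adjoint sub-Laplacian $\tilde{\mathcal{L}}_\vep$ on $L^2(\He^n)$, together with the $L^2$-boundedness of the Riesz transform $R_j^\vep$. Set $\phi_\gamma(\tilde{\mathcal{L}}_\vep) := \tilde{\mathcal{L}}_\vep^{1/2}(\gamma^2+\tilde{\mathcal{L}}_\vep)^{-1/2}$, which by functional calculus is a bounded operator with $\|\phi_\gamma(\tilde{\mathcal{L}}_\vep)\|\le 1$. The first step is to record the operator identity
$$ \tilde{X}_j^\vep(\gamma^2+\tilde{\mathcal{L}}_\vep)^{-1/2} = R_j^\vep\,\phi_\gamma(\tilde{\mathcal{L}}_\vep) \qquad \text{on } L^2(\He^n). $$
This is immediate on the Fourier transform side: both sides are right-invariant bounded operators, and their Heisenberg Fourier multipliers coincide, each equalling $\lambda\mapsto D_j^\vep(\lambda)(\gamma^2+H_\vep(\lambda))^{-1/2}$, where $D_j^\vep(\lambda)$ is the multiplier of $\tilde{X}_j^\vep$ and $H_\vep(\lambda)$ that of $\tilde{\mathcal{L}}_\vep$; here one uses that $H_\vep(\lambda)$ is injective, so $H_\vep(\lambda)^{-1/2}H_\vep(\lambda)^{1/2}$ acts as the identity. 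Consequently
$$ \tilde{X}_j^\vep(\gamma^2+\tilde{\mathcal{L}}_\vep)^{-1/2}f - R_j^\vep f = R_j^\vep\big((\phi_\gamma(\tilde{\mathcal{L}}_\vep)-I)f\big). $$

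Since $R_j^\vep$ is bounded on $L^2(\He^n)$ — it is, up to the pull-back under $\varphi_\vep$, the first-order Heisenberg Riesz transform, a Calderón--Zygmund operator — it suffices to show $(\phi_\gamma(\tilde{\mathcal{L}}_\vep)-I)f\to 0$ in $L^2$ as $\gamma\to 0$. For this I would invoke the spectral theorem: if $E$ is the spectral measure of $\tilde{\mathcal{L}}_\vep$, then
$$ \|(\phi_\gamma(\tilde{\mathcal{L}}_\vep)-I)f\|_2^2 = \int_{[0,\infty)}\Big|\sqrt{\tfrac{\mu}{\gamma^2+\mu}}-1\Big|^2\, d\|E_\mu f\|_2^2 . $$
The integrand is bounded by $1$ uniformly in $\gamma$ and tends to $0$ as $\gamma\to 0$ for each fixed $\mu>0$. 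Moreover $\tilde{\mathcal{L}}_\vep$ has trivial kernel on $L^2(\He^n)$: if $\tilde{\mathcal{L}}_\vep f=0$ then $H_\vep(\lambda)\widehat f(\lambda)=0$ for a.e.\ $\lambda$, forcing $\widehat f=0$; hence $E_{\{0\}}=0$ and the atom at $\mu=0$ (where the integrand equals $1$) contributes nothing. Since $d\|E_\mu f\|_2^2$ is a finite measure of total mass $\|f\|_2^2$, the dominated convergence theorem gives $\|(\phi_\gamma(\tilde{\mathcal{L}}_\vep)-I)f\|_2^2\to 0$, which completes the proof.

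I expect no serious obstacle; the only points needing a little care are the legitimacy of factoring through the unbounded operator $\tilde{\mathcal{L}}_\vep^{-1/2}$ — handled cleanly on the Fourier side, where $H_\vep(\lambda)$ has purely discrete, strictly positive spectrum $\{(2k+n)|\lambda|\vep:k\in\N\}$ — and the observation that $0$, while lying in the spectrum of $\tilde{\mathcal{L}}_\vep$ on $L^2(\He^n)$, carries no spectral mass. Alternatively one can argue directly from Plancherel \eqref{planch}, writing $\|\tilde{X}_j^\vep(\gamma^2+\tilde{\mathcal{L}}_\vep)^{-1/2}f - R_j^\vep f\|_2^2$ as $c_n\int_{\R^\ast}\|D_j^\vep(\lambda)H_\vep(\lambda)^{-1/2}[\phi_\gamma(H_\vep(\lambda))-I]\widehat f(\lambda)\|_{HS}^2|\lambda|^n\,d\lambda$, dominating the integrand by $C\|\widehat f(\lambda)\|_{HS}^2|\lambda|^n$ using boundedness of the Riesz multiplier and $\|\phi_\gamma(H_\vep(\lambda))-I\|\le 1$, and passing to the limit by dominated convergence on the eigenvalues of $H_\vep(\lambda)$ and then in $\lambda$. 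One may also first reduce to $\vep=1$ via the isomorphism $\varphi_\vep$, since $(\tilde{X}_j^\vep(\gamma^2+\tilde{\mathcal{L}}_\vep)^{-1/2}f)\circ\varphi_\vep = \tilde{X}_j(\gamma^2\vep^{-1}+\tilde{\mathcal{L}})^{-1/2}(f\circ\varphi_\vep)$ and $(R_j^\vep f)\circ\varphi_\vep = R_j(f\circ\varphi_\vep)$.
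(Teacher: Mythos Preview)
Your argument is correct and takes a somewhat different route from the paper. The paper first reduces to $\vep=1$ via $\varphi_\vep$ (which you mention only as an aside at the end), and then works directly with the Plancherel formula: it writes the multiplier of $R_{j,\gamma}-R_j$ explicitly as $\frac{i}{2}(A_j(\lambda)+A_j(\lambda)^\ast)\bigl[(\gamma^2+H(\lambda))^{-1/2}-H(\lambda)^{-1/2}\bigr]$, applies this to Hermite functions $\Phi_\alpha^\lambda$, and reads off coefficients of the form $\bigl(\tfrac{(2\alpha_j+2)|\lambda|}{\gamma^2+(2|\alpha|+n+2)|\lambda|}\bigr)^{1/2}-\bigl(\tfrac{2\alpha_j+2}{2|\alpha|+n+2}\bigr)^{1/2}$ that visibly tend to zero; dominated convergence in $\alpha$ and $\lambda$ finishes. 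Your main argument instead factors $\tilde{X}_j^\vep(\gamma^2+\tilde{\mathcal{L}}_\vep)^{-1/2}=R_j^\vep\cdot\tilde{\mathcal{L}}_\vep^{1/2}(\gamma^2+\tilde{\mathcal{L}}_\vep)^{-1/2}$ and appeals only to the abstract spectral theorem for $\tilde{\mathcal{L}}_\vep$ together with the $L^2$-boundedness of $R_j^\vep$. This is cleaner and avoids any explicit basis computation; what it buys is that you never need to know the action of $A_j(\lambda)$ on Hermite functions. The paper's route, on the other hand, is entirely self-contained within the notation already set up in \S\ref{subsec-Heisen} and does not invoke the Riesz transform's $L^2$-boundedness as a separate input. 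Your ``alternative'' paragraph at the end is in fact precisely the paper's argument.
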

\begin{proof} 
In view of the relations (\ref{rel-1}) and (\ref{rel-2}) we see that
$$ \left( \tilde{X}_j^\vep (\gamma^2+\tilde{\mathcal{L}}_\vep)^{-1/2}f\right)  \circ \varphi_\vep = \tilde{X}_j (\gamma^2/\vep+\tilde{\mathcal{L}})^{-1/2}( f \circ \varphi_\vep). $$ 
As we also have the relation  $ (R_j^\vep f) \circ \varphi_\vep  = R_j(f\circ \varphi_\vep) $ is is enough to prove the lemma when $ \vep =1.$ On the Fourier transform side, the operator $ R_{j,\gamma}= \tilde{X}_j (\gamma^2+\tilde{\mathcal{L}})^{-1/2} $ corresponds to the left multiplier $  m_{j,\gamma}(\lambda) = \frac{i}{2} \left( A_j(\lambda) + A_j(\lambda)^\ast \right) ( \gamma^2 + H(\lambda))^{-1/2} $ and $ R_j $ corresponds to $ m_j(\lambda) = \frac{i}{2} \left( A_j(\lambda) + A_j(\lambda)^\ast \right) H(\lambda)^{-1/2}.$ In view of the Plancherel theorem for the group Fourier transform on $ \He^n$ we see that 
$$ \| R_{j,\gamma}f - R_j f \|_2^2 = C_n \int_{-\infty}^\infty  \| (m_{j,\gamma}(\lambda)- m_j(\lambda))\widehat{f}(\lambda) \|_{HS}^2 |\lambda|^n d\lambda.$$ As  both $ T $ and $ T^\ast $ have the same Hilbert-Schmidt operator norm, we can calculate the right hand side of the above using the Hermite basis $ \Phi_\alpha^\lambda, \alpha \in \N^n $ obtaining
$$ \| R_{j,\gamma}f - R_j f \|_2^2 = C_n \int_{-\infty}^\infty   \left( \sum_{\alpha \in \N^n} \| \widehat{f}(\lambda)^* (m_{j,\gamma}(\lambda)^\ast- m_j(\lambda)^\ast) \Phi_\alpha^\lambda \|_2^2\right)   |\lambda|^n d\lambda.$$ 
Since $ H(\lambda) \Phi_\alpha^\lambda =  (2|\alpha|+n)|\lambda| \Phi_\alpha^\lambda $, \, $ A_j(\lambda)^\ast \Phi_\alpha^\lambda = (2\alpha_j+2)^{1/2} |\lambda|^{1/2} \Phi_{\alpha+e_j}^\lambda $ and $ A_j(\lambda) \Phi_\alpha^\lambda = (2\alpha_j)^{1/2} |\lambda|^{1/2} \Phi_{\alpha - e_j}^\lambda $, we have
\begin{align*} 
(m_{j,\gamma}(\lambda)^\ast- m_j(\lambda)^\ast) \Phi_\alpha^\lambda &= \frac{i}{2} \left( \left( \frac{(2 \alpha_j + 2)|\lambda|}{\left( \gamma^2 + (2|\alpha|+n+2)|\lambda| \right)} \right)^{\frac{1}{2}} - \left( \frac{2 \alpha_j + 2}{2|\alpha|+n+2} \right)^{\frac{1}{2}} \right) \Phi_{\alpha+e_j}^\lambda \\ 
& \quad + \frac{i}{2} \left( \left( \frac{(2 \alpha_j)|\lambda|}{\left( \gamma^2 + (2|\alpha|+n-2)|\lambda| \right)} \right)^{\frac{1}{2}} - \left( \frac{2 \alpha_j}{2|\alpha|+n-2} \right)^{\frac{1}{2}} \right) \Phi_{\alpha - e_j}^\lambda 
\end{align*}
and therefore 
\begin{align*}
& \| \widehat{f}(\lambda)^* (m_{j,\gamma}(\lambda)^\ast- m_j(\lambda)^\ast) \Phi_\alpha^\lambda \|_2 \\ 
& \leq \frac{1}{2} \left| \left( \frac{(2 \alpha_j + 2)|\lambda|}{\left( \gamma^2 + (2|\alpha|+n+2)|\lambda| \right)} \right)^{\frac{1}{2}} - \left( \frac{2 \alpha_j + 2}{2|\alpha|+n+2} \right)^{\frac{1}{2}} \right| \| \widehat{f}(\lambda)^* \Phi_{\alpha+e_j}^\lambda \|_2 \\ 
& \quad + \frac{1}{2} \left| \left( \frac{(2 \alpha_j)|\lambda|}{\left( \gamma^2 + (2|\alpha|+n-2)|\lambda| \right)} \right)^{\frac{1}{2}} - \left( \frac{2 \alpha_j}{2|\alpha|+n-2} \right)^{\frac{1}{2}} \right| \| \widehat{f}(\lambda)^* \Phi_{\alpha - e_j}^\lambda \|_2. 
\end{align*}
As the above coefficients go to zero as $ \gamma $ goes to zero, the result follows from Fubini-Tonelli theorem.
\end{proof}

We also require the following lemma which allows us to compare $ R_{j,\gamma}^\vep $ with the Riesz transforms $ R_j^\vep.$

\begin{lemma} For any $ \vep, \gamma > 0 $ the operators $ \tilde{\mathcal{L}}_\vep^{1/2} (\gamma^2+\tilde{\mathcal{L}}_\vep)^{-1/2} $ are uniformly bounded on $ L^p(\He^n) $ for any $ 1 \leq p \leq \infty.$

\end{lemma}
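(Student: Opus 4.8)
The plan is to peel off the parameter $\vep$ by a scaling argument, and then realise $\tilde{\mathcal{L}}^{1/2}(\gamma^2+\tilde{\mathcal{L}})^{-1/2}$ as the identity minus an absolutely convergent series of $L^1$-convolution contractions.

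First I would remove $\vep$. On $L^2(\He^n)$ the operator $\tilde{\mathcal{L}}_\vep^{1/2}(\gamma^2+\tilde{\mathcal{L}}_\vep)^{-1/2}$ is the spectral multiplier $\psi_\gamma(\tilde{\mathcal{L}}_\vep)$ for the bounded Borel function $\psi_\gamma(s) = \sqrt{s}\,(\gamma^2+s)^{-1/2}$. Since $\psi_\gamma(\vep s) = \psi_{\gamma/\sqrt{\vep}}(s)$, the intertwining relation $(\tilde{\mathcal{L}}_\vep f)\circ\varphi_\vep = \vep\,\tilde{\mathcal{L}}(f\circ\varphi_\vep)$ (recorded just before \eqref{rel-heat}) yields, through the functional calculus, $(\psi_\gamma(\tilde{\mathcal{L}}_\vep)f)\circ\varphi_\vep = \psi_{\gamma/\sqrt{\vep}}(\tilde{\mathcal{L}})(f\circ\varphi_\vep)$. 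As $\varphi_\vep$ merely rescales Lebesgue measure, $\|g\circ\varphi_\vep\|_p = \vep^{n/p}\|g\|_p$, so the $L^p$ operator norms of $\psi_\gamma(\tilde{\mathcal{L}}_\vep)$ and $\psi_{\gamma/\sqrt{\vep}}(\tilde{\mathcal{L}})$ agree. Because $\gamma/\sqrt{\vep}$ ranges over all of $(0,\infty)$, it suffices to bound $\psi_\gamma(\tilde{\mathcal{L}})$ on $L^p(\He^n)$ with a constant independent of $\gamma>0$.

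Next comes the main computation. Writing $\psi_\gamma(s)^2 = 1-\gamma^2(\gamma^2+s)^{-1}$ and applying the binomial expansion \eqref{expansion-half-power} to $(1-d)^{1/2}$ with $d = \gamma^2(\gamma^2+s)^{-1}\in(0,1]$, we get $\psi_\gamma(s) = 1 - \sum_{i\ge 1} c_i\,d^i$ with $c_i>0$ and $\sum_{i\ge1} c_i = 1$ (evaluate at $d=1$, the series being absolutely convergent there). Hence, as bounded operators on $L^2$,
$$ \psi_\gamma(\tilde{\mathcal{L}}) = I - \sum_{i=1}^\infty c_i\,\gamma^{2i}(\gamma^2+\tilde{\mathcal{L}})^{-i}, $$
the series converging in operator norm since the $i$-th term is the spectral multiplier $c_i\,d^i$, of sup-norm $c_i$. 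The point is that $\gamma^{2i}(\gamma^2+\tilde{\mathcal{L}})^{-i}$ is a contraction on every $L^p$: from $(\gamma^2+\tilde{\mathcal{L}})^{-i} = \frac{1}{\Gamma(i)}\int_0^\infty \eta^{i-1}e^{-\eta\gamma^2}e^{-\eta\tilde{\mathcal{L}}}\,d\eta$ and $e^{-\eta\tilde{\mathcal{L}}}f = p_\eta\ast f$, with $p_\eta\ge 0$ and $\|p_\eta\|_{L^1(\He^n)} = 1$ (read off from \eqref{heat}, integrating first in $t$ and then in $z$, or from conservativity of $e^{-t\tilde{\mathcal{L}}}$), Minkowski's inequality shows $(\gamma^2+\tilde{\mathcal{L}})^{-i}$ is convolution with a kernel of $L^1$-norm at most $\gamma^{-2i}$; Young's inequality on the unimodular group $\He^n$ then gives $\|\gamma^{2i}(\gamma^2+\tilde{\mathcal{L}})^{-i}\|_{L^p\to L^p}\le 1$ for all $1\le p\le\infty$. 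Summing, the displayed series converges in operator norm on $L^p$ too, and $\|\psi_\gamma(\tilde{\mathcal{L}})\|_{L^p\to L^p}\le 1+\sum_{i\ge1} c_i = 2$, uniformly in $\gamma$; combined with the first step this gives the claim with norm bound $2$ independent of $\vep,\gamma>0$.

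I do not expect a serious obstacle. The only steps needing a word of care are the passage from the scalar binomial identity to the operator identity (together with its norm convergence, which is immediate since $\sum c_i<\infty$ and each term is dominated by $c_i$ both in the $L^2$ spectral norm and, up to the contraction factor, in the $L^p$ norm) and the normalisation $\|p_\eta\|_{1}=1$. One could also skip the scaling reduction altogether and run the same argument directly on $\Hep$, since $e^{-t\tilde{\mathcal{L}}_\vep}$ likewise has a nonnegative, $L^1$-normalised heat kernel.
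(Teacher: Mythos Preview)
Your argument is correct and follows essentially the same route as the paper: reduce to $\vep=1$ by the scaling relation, rewrite the operator as $(1-\gamma^2(\gamma^2+\tilde{\mathcal{L}})^{-1})^{1/2}$, expand via the binomial series \eqref{expansion-half-power}, and bound each term using the contractivity of the heat semigroup $e^{-\eta\tilde{\mathcal{L}}}$ on $L^p$. Your version is in fact slightly more explicit, recording the summability $\sum_{i\ge1}c_i=1$ to obtain the uniform bound $2$ and spelling out the operator-norm convergence of the series; the paper leaves the final summation implicit.
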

\begin{proof}
As in the case of the previous lemma, we also have the following relation
$$ \left( \tilde{\mathcal{L}}_\vep^{1/2} (\gamma^2+\tilde{\mathcal{L}}_\vep)^{-1/2}f\right)  \circ \varphi_\vep = \tilde{\mathcal{L}}^{1/2} (\gamma^2/\vep+\tilde{\mathcal{L}})^{-1/2}( f \circ \varphi_\vep). $$
Therefore, we assume that $ \vep = 1$ and consider $ \tilde{\mathcal{L}}^{1/2} (\gamma^2+\tilde{\mathcal{L}})^{-1/2} $ which we rewrite as  $ \left( 1- \gamma^2(\gamma^2+\tilde{\mathcal{L}})^{-1}\right) ^{1/2}. $
Once again, we make use of the power series expansion \eqref{expansion-half-power}, which says that $ (1-d)^{1/2} = 1- \sum_{k=1}^\infty c_k d^k$. Note also that this series converges for all $ |d| \leq 1.$ Therefore, we have the expansion
\begin{equation} \label{exp} \tilde{\mathcal{L}}^{1/2} (\gamma^2+\tilde{\mathcal{L}})^{-1/2} = I - \sum_{k=1}^\infty c_k \gamma^{2k} (\gamma^2+\tilde{\mathcal{L}})^{-k}.
\end{equation}
As the heat semigroup $ e^{- \eta \tilde{\mathcal{L}}} $ are uniformly bounded on $ L^p(\He^n), 1 \leq p \leq \infty  $ with norm one, it follows that
$$ \gamma^{2k} \| (\gamma^2+\tilde{\mathcal{L}})^{-k}f \|_p \leq \frac{\gamma^{2k}}{\Gamma(k)} \int_0^\infty \eta^{k-1} e^{- \eta \gamma^2} \| e^{- \eta \tilde{\mathcal{L}}}f\|_p \, d\eta \leq \|f\|_p.$$
The lemma now follows from the expansion (\ref{exp}) and the above estimates.
\end{proof}

\begin{remark} \label{preserve}
From the above lemma it follows that $ \tilde{\mathcal{L}}^{1/2} (\gamma^2+\tilde{\mathcal{L}})^{-1/2} $ preserves the Sobolev spaces $ W^{1,p}(\He^n)$ and $ \mathring{W}^{1,p}(\He^n)$. Indeed, as it  is a left Fourier multiplier, we have
$$  X (\tilde{\mathcal{L}}^{1/2} (\gamma^2+\tilde{\mathcal{L}})^{-1/2}f) =  \tilde{\mathcal{L}}^{1/2} (\gamma^2+\tilde{\mathcal{L}})^{-1/2} X f $$
for any left invariant vector field $ X.$ Hence, the above lemma gives
$$ \| X (\tilde{\mathcal{L}}^{1/2} (\gamma^2+\tilde{\mathcal{L}})^{-1/2}f) \|_p \leq C \| Xf\|_p .$$
\end{remark}

We are now in a position to prove the following result on the boundedness of $R_j $ on  homogeneous as well as non-homogeneous Sobolev spaces.

\begin{theorem}\label{Riesz} 
For any $ j =1, 2, \ldots, 2n$, the Riesz transforms $ R_j $ are not bounded on $ W^{1,1}(\He^n)$ or $ \mathring{W}^{1,1}(\He^n)$. The same is true for any higher order Riesz transform $ R_P $ associated to bi-graded solid harmonics.
\end{theorem}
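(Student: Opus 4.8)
The plan is to argue by contradiction, exploiting the family of isomorphic groups $\He_\varepsilon^n$ and the fact that, under the isomorphism $\varphi_\varepsilon(z,t)=(\varepsilon^{-1/2}z,t)$, the Heisenberg Riesz transforms $R_j^\varepsilon$ degenerate as $\varepsilon\to0$ to the Euclidean Riesz transforms $R_j^0=\frac{\partial}{\partial x_j}(-\Delta_{\C^n})^{-1/2}$ (resp.\ $\frac{\partial}{\partial y_j}(-\Delta_{\C^n})^{-1/2}$) on $\C^n=\R^{2n}$, at which point Theorem \ref{thm-BP} forbids boundedness on $\mathring W^{1,1}(\R^{2n})$. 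By \eqref{rel-1} it suffices to treat $N=1$ (the case of general $N$ is the same vector-field bookkeeping as in the proof of Theorem \ref{Main-Weyl}), and I treat the first order transforms $R_j$, $1\le j\le 2n$, first. So assume, for contradiction, that $R_j$ is bounded either on $W^{1,1}(\He^n)$ or on $\mathring W^{1,1}(\He^n)$.

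First I would transfer this to a scale-invariant estimate on $\He_\varepsilon^n$. Writing the boundedness of $R_j$ on $\mathring W^{1,1}(\He^n)$ in dual form, there is $C$ with
$$\sum_{i=1}^n\int_{\He^n}\bigl(|X_iR_jf\,\varphi_i|+|Y_iR_jf\,\psi_i|\bigr)\,dz\,dt\ \le\ C\,\|f\|_{\mathring W^{1,1}(\He^n)}\,\|g\|_\infty$$
for all $f$ and all $g=(\varphi_i,\psi_i)_{i=1}^n$ with $\|g\|_\infty=\max_i\{\|\varphi_i\|_\infty,\|\psi_i\|_\infty\}$. Substituting $f=h\circ\varphi_\varepsilon$, $\varphi_i=\widetilde\varphi_i\circ\varphi_\varepsilon$, $\psi_i=\widetilde\psi_i\circ\varphi_\varepsilon$, and invoking \eqref{rel-1} together with $(R_j^\varepsilon h)\circ\varphi_\varepsilon=R_j(h\circ\varphi_\varepsilon)$, the Jacobian factor $\varepsilon^{n}$ and the factor $\varepsilon^{-1/2}$ produced by each vector field all cancel, and one obtains, with the same $C$ and for every $\varepsilon>0$,
$$\sum_{i=1}^n\int_{\C^n\times\R}\bigl(|X_i^\varepsilon R_j^\varepsilon h\,\widetilde\varphi_i|+|Y_i^\varepsilon R_j^\varepsilon h\,\widetilde\psi_i|\bigr)\ \le\ C\,\|h\|_{\mathring W^{1,1}(\He_\varepsilon^n)}\,\|g\|_\infty.$$
If instead only boundedness on $W^{1,1}(\He^n)$ is assumed, the $L^1$-part of the norm scales with the lower power $\varepsilon^{n}$, so the same computation produces the right-hand side $C\bigl(\varepsilon^{1/2}\|h\|_1+\|h\|_{\mathring W^{1,1}(\He_\varepsilon^n)}\bigr)\|g\|_\infty$; the extra term will disappear in the limit $\varepsilon\to0$, so the two cases merge.

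To take the limit I would pass through the regularized operators $R_{j,\gamma}^\varepsilon=\tilde X_j^\varepsilon(\gamma^2+\tilde{\mathcal L}_\varepsilon)^{-1/2}$, $\gamma>0$, which (unlike $R_j^\varepsilon$) have integrable kernels. On the Fourier side $R_{j,\gamma}^\varepsilon=R_j^\varepsilon\circ\tilde{\mathcal L}_\varepsilon^{1/2}(\gamma^2+\tilde{\mathcal L}_\varepsilon)^{-1/2}$, and since $\tilde{\mathcal L}_\varepsilon^{1/2}(\gamma^2+\tilde{\mathcal L}_\varepsilon)^{-1/2}$ preserves $\mathring W^{1,1}(\He_\varepsilon^n)$ with a bound uniform in $\varepsilon,\gamma$ (Remark \ref{preserve}, extended to $\He_\varepsilon^n$ through the scaling relations), the inequality of the previous paragraph upgrades to the same inequality with $R_{j,\gamma}^\varepsilon$ in place of $R_j^\varepsilon$, still uniform in $\varepsilon$ and $\gamma$. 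Now fix a Schwartz function $h$ and let $\varepsilon\to0$: writing $X_i^\varepsilon R_{j,\gamma}^\varepsilon h=\tilde X_j^\varepsilon(\gamma^2+\tilde{\mathcal L}_\varepsilon)^{-1/2}(X_i^\varepsilon h)$ with $X_i^\varepsilon h=\frac{\partial h}{\partial x_i}+\tfrac{\varepsilon}{2}y_i\frac{\partial h}{\partial t}$ converging to $\frac{\partial h}{\partial x_i}$ in the Schwartz topology, Corollary \ref{schwarz-pointwise} (whose proof via the kernels $K_{\gamma,\varepsilon}\ast_\varepsilon(\cdot)$ and dominated convergence is robust under such $\varepsilon$-varying inputs) gives $X_i^\varepsilon R_{j,\gamma}^\varepsilon h\to\frac{\partial}{\partial x_i}\frac{\partial}{\partial x_j}(\gamma^2-\Delta_{\C^n})^{-1/2}h$ pointwise, and likewise for $Y_i^\varepsilon$; since moreover $\|h\|_{\mathring W^{1,1}(\He_\varepsilon^n)}\to\|h\|_{\mathring W^{1,1}(\C^n\times\R)}$, Fatou's lemma yields the analogous inequality on $\C^n\times\R$ with $\frac{\partial}{\partial x_j}(\gamma^2-\Delta_{\C^n})^{-1/2}$ in place of $R_j^0$, with a constant independent of $\gamma$. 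A second application of Fatou, letting $\gamma\to0$ (where $\frac{\partial}{\partial x_i}\frac{\partial}{\partial x_j}(\gamma^2-\Delta_{\C^n})^{-1/2}h\to\frac{\partial}{\partial x_i}R_j^0h$ pointwise by dominated convergence on the Fourier side, using $2n\ge2$), produces the inequality with $R_j^0$ itself. The main obstacle lies precisely here: verifying that the $\varepsilon$-dependence in both the vector field $X_i^\varepsilon$ and its argument $X_i^\varepsilon h$ does not spoil the pointwise convergence of Corollary \ref{schwarz-pointwise}, that $R_{j,\gamma}^\varepsilon$ genuinely inherits a bound uniform in $\varepsilon$ and $\gamma$, and that the limiting homogeneous Sobolev norm is correctly identified.

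Finally, taking $h(z,t)=\phi(z)\chi(t)$ with $\phi,\chi$ Schwartz and the test functions $\widetilde\varphi_i,\widetilde\psi_i$ independent of $t$, the factor $\|\chi\|_1$ cancels and one is left with $\sum_{i=1}^n\bigl(\|\tfrac{\partial}{\partial x_i}R_j^0\phi\|_1+\|\tfrac{\partial}{\partial y_i}R_j^0\phi\|_1\bigr)\le C\,\|\phi\|_{\mathring W^{1,1}(\R^{2n})}$, i.e.\ $R_j^0$ would be bounded on $\mathring W^{1,1}(\R^{2n})$. Its multiplier is $i\xi_j/|\xi|$ (or $i\eta_j/|\xi|$), a nonconstant function homogeneous of degree $0$, so Theorem \ref{thm-BP} gives a contradiction. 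For a higher order transform $R_P$ attached to a bigraded solid harmonic $P$ of bidegree $(a,b)$ with $a+b\ge1$, the same three-stage argument applies, with the $\He_\varepsilon^n$-scaling relations for the right-invariant complex vector fields playing the role of \eqref{rel-1}, the $\gamma$-regularization handling the fractional power $H(\lambda)^{-(a+b)/2}$ as above, and the natural $\He_\varepsilon^n$-analogue of Lemma \ref{conv-scaled-riesz} showing that $G_\lambda(P)H(\lambda)^{-(a+b)/2}$ degenerates to the Euclidean higher order Riesz transform whose multiplier is a constant multiple of the degree-$0$ homogeneous function built from $P$; Theorem \ref{thm-BP} again excludes boundedness on $\mathring W^{N,1}(\R^{2n})$, hence on $W^{N,1}(\He^n)$ and $\mathring W^{N,1}(\He^n)$.
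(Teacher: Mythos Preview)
Your proposal is correct and follows essentially the same approach as the paper: argue by contradiction, transfer the assumed Sobolev bound to the scaled groups $\He_\varepsilon^n$ via $\varphi_\varepsilon$, pass to the regularized operators $R_{j,\gamma}^\varepsilon$ using Remark~\ref{preserve}, let $\varepsilon\to0$ with Corollary~\ref{schwarz-pointwise} and Fatou, then let $\gamma\to0$, and invoke Theorem~\ref{thm-BP}. The only cosmetic differences are that the paper takes the $\gamma\to0$ limit via $L^2$ convergence along a subsequence (Lemma~\ref{lem-gamma-riesz-transform}) rather than Fourier-side dominated convergence, and it does not make explicit your tensor-product reduction $h(z,t)=\phi(z)\chi(t)$ to pass from $\mathring W^{1,1}(\C^n\times\R)$ to $\mathring W^{1,1}(\C^n)$.
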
 
\begin{proof} We prove this theorem by contradiction. Under the assumption that $ R_j $ is bounded on $ W^{1,1}(\He^n)$ or  $ \mathring{W}^{1,1}(\He^n)$ we will show that the Euclidean Riesz transform $ R_j^0 $ on $ \C^n $ is bounded on $  \mathring{W}^{1,1}(\C^n).$ The theorem then follows from Theorem \ref{thm-BP}. 

In view of Remark \ref{preserve}, we observe that the operators $ R_{j,\gamma} = R_j  \tilde{\mathcal{L}}^{1/2} (\gamma^2+\tilde{\mathcal{L}})^{-1/2} $ are uniformly bounded on $  \mathring{W}^{1,1}(\He^n)$ (resp. $ W^{1,1}(\He^n) $). 
Since we have
$$  (R_{j,\gamma}^\vep f) \circ \varphi_\vep =  R_{j,\gamma/{\sqrt{\vep}}}(f \circ \varphi_\vep) $$
we claim  that $ R_{j,\gamma}^\vep $ are uniformly bounded on $  \mathring{W}^{1,1}(\Hep)$ provided $ R_{j,\gamma} $ are uniformly bounded on $ \mathring{W}^{1,1}(\He^n)$.  To see this, apply  any of the vector fields $ X_k^\vep $ or $ Y_k^\vep $ to the above and make use of (\ref{rel-1}) to get 
$$  (X_k^\vep R_{j,\gamma}^\vep f) \circ \varphi_\vep =  \sqrt{\vep} X_kR_{j,\gamma/{\sqrt{\vep}}}(f \circ \varphi_\vep) =  R_{j,\gamma/{\sqrt{\vep}}}( (X_k^\vep f) \circ \varphi_\vep) $$ 
and a similar expression for the application of $ Y_k^\vep.$ The uniform boundedness of $ R_{j,\gamma} $ gives
$$ \int_{\He^n} |(X_k^\vep R_{j,\gamma}^\vep f) \circ \varphi_\vep(g) | \, dg \leq C \sqrt{\vep} \sum_{k=1}^n \int_{\He^n} \left( |X_k(f\circ \varphi_\vep)(g) |+  |Y_k(f\circ \varphi_\vep)(g)|\right) dg.$$
Once again using (\ref{rel-1}) we see that the right hand side reduces to
$$ C \sum_{k=1}^n \int_{\He^n} \left( | (X_k^\vep f)\circ \varphi_\vep(g) | + |(Y_k^\vep f)\circ \varphi_\vep(g)|\right) dg$$
and this proves our claim. We take a Schwartz function $ f $  and  $ h \in L^\infty(\He^n)$ and consider
$$ \int_{\He^n} X_k^\vep R_{j,\gamma}^\vep f(g) h(g) \, dg  = \int_{\He^n}  R_{j,\gamma}^\vep(X_k^\vep f)(g) h(g) \, dg. $$
By what we have proved above we obtain 
$$ \left| \int_{\He^n} R_{j,\gamma}^\vep(X_k^\vep f)(g) h(g) \, dg \right| \leq C \|h\|_\infty \sum_{k=1}^n \int_{\He^n} \left( | X_k^\vep f (g) |+  |Y_k^\vep f(g)|\right) dg.$$ 
Now we pass to the limit as $ \vep \rightarrow 0 $ and make use of the result of Corollary \ref{schwarz-pointwise} to conclude 
$$ \left| \int_{\He^n}  \frac{\partial}{\partial x_k} R_{j,\gamma}^0 f(z,t) h(z,t) \, dz \, dt \right| \leq C \|h\|_\infty \sum_{k=1}^n \int_{\He^n} \left( \left| \frac{\partial f}{\partial x_k}(z,t) \right| + \left| \frac{\partial f}{\partial y_k}(z,t) \right| \right) dz \, dt.$$ 
To complete the proof we remark that an analogue of Lemma \ref{lem-gamma-riesz-transform} is true for $ R_{j,\gamma}^0 ,$ the proof of which is much simpler. Hence as $ f \in L^2(\He^n) $ we can take the limit as $ \gamma \rightarrow 0 $ along a subsequence to conclude that
$$ \left| \int_{\He^n}  \frac{\partial}{\partial x_k} R_{j}^0 f(z,t) h(z,t) \, dz \, dt \right| \leq C \|h\|_\infty \sum_{k=1}^n \int_{\He^n} \left( \left| \frac{\partial f}{\partial x_k}(z,t) \right| + \left|\frac{\partial f} {\partial y_k}(z,t) \right| \right) dz \, dt.$$ 
This proves that $ R_j^0 $ are bounded on the homogeneous Sobolev space $ \mathring{W}^{1,1}(\C^n) .$

In the case of $ W^{1,1}(\He^n) $ we argue as follows. Assuming that $ R_{j,\gamma} $ are uniformly bounded on $ W^{1,1}(\He^n) $, we get that 
\begin{align*}
& \int_{\He^n} |(X_k^\vep R_{j,\gamma}^\vep f) \circ \varphi_\vep(g) | \, dg \\ 
& = \sqrt{\vep} \int_{\He^n} \left| X_k R_{j,\gamma/{\sqrt{\vep}}}(f \circ \varphi_\vep) (g) \right| dg \\ 
& \lesssim \sqrt{\vep} \left( \int_{\He^n} | f \circ \varphi_\vep(g) | \, dg + \sum_{k=1}^n \int_{\He^n} \left( |X_k(f\circ \varphi_\vep)(g) |+  |Y_k(f\circ \varphi_\vep)(g)|\right) dg \right) \\ 
& \lesssim \sqrt{\vep} \int_{\He^n} | f \circ \varphi_\vep(g) | \, dg + \sum_{k=1}^n \int_{\He^n} \left( | (X_k^\vep f)\circ \varphi_\vep(g) | + |(Y_k^\vep f)\circ \varphi_\vep(g)|\right) dg. 
\end{align*} 
Now, after applying the change of variables $\varphi_\vep^{-1}$, the term corresponding to the $L^1$-norm of $f$ will go to zero as $\epsilon \to 0$ because of the presence of the scalar multiple factor $\sqrt{\epsilon}$, and the remaining argument, as earlier, will help conclude that $ R_j^0 $ are bounded on the homogeneous Sobolev space $ \mathring{W}^{1,1}(\C^n) $, which is a contradiction in view of Theorem \ref{thm-BP}. This completes the proof.
\end{proof}


\subsection{Homogeneous multipliers on Sobolev spaces} \label{subsec-hom-mult-Sob} 
In this subsection we study the boundedness of  homogeneous (left) Fourier multipliers on $ W^{N,1}(\He^n) $ or $ \mathring{W}^{N,1}(\He^n) $ and prove Theorem \ref{thm2}. So we are assuming that $ T_m $  commutes with dilations and the multiplier $ m $ satisfies the condition (\ref{rep-hei}), viz
\begin{equation}\label{rep-heis} m_\delta(\lambda) = \sum_{j=1}^{d(\delta)} c_{\delta,j} G_\lambda(P^\delta_j) H(\lambda)^{-(a+b)/2}.\end{equation}
Though some of the following has been already discussed in the introduction, we recall them here for the sake  of readers' convenience. Let us recall that the unitary group $ U(n) $ has an action on $ \He^n$ given by $ R_\sigma (z,t) = (\sigma^{-1} z, t) $ which act as automorphisms of the group. Therefore, $ \pi_\lambda \circ R_\sigma $ are again irreducible unitary representations of $ \He^n $ and by the theorem of Stone-von Neumann it follows that $ \pi_\lambda \circ R_\sigma $ is unitarily equivalent to $ \pi_\lambda.$ Consequently, for any $ \lambda \in \R^\ast$ and $ \sigma \in U(n) $ we have a unitary operator $ \mu_\lambda(\sigma) $ acting on $ L^2(\R^n) $ such that
$$ \pi_\lambda \circ R_\sigma(z,t) = \mu_\lambda(\sigma)^\ast \pi_\lambda(z,t) \mu_\lambda(\sigma) $$
The action of $ U(n) $ on $ \He^n$  gives rise to an action on functions by $ R_\sigma f(z,t) = f(R_\sigma(z,t)) = f(\sigma^{-1}z,t).$  It is then easy to check  that 
$$ \widehat{R_\sigma f}(\lambda) = \mu_\lambda(\sigma) \widehat{f}(\lambda) \mu_\lambda(\sigma)^\ast .$$
As in the Euclidean case we have the following relation for Fourier multipliers on $ \He^n$: 
$$ R_\sigma \circ T_m \circ R_{\sigma^{-1}} f = T_{R_\sigma m}f $$
where $ R_\sigma m(\lambda) = \mu_\lambda(\sigma) m(\lambda) \mu_\lambda(\sigma)^\ast.$ For a class one representation $ \delta $ of $ U(n) $ realised on $ \mathcal{H}_\delta ,$ the space of bi-graded spherical harmonics of bi-degree $ (a,b),$ let  $ \chi_\delta $ stand for the character of $ \delta $ and define
\begin{equation} \label{eq-2-repeat} 
m_\delta(\lambda)  = \int_{U(n)}  \chi_{\delta}(\sigma^{-1}) \mu_\lambda(\sigma) m(\lambda) \mu_\lambda(\sigma)^\ast \, d\sigma .\end{equation} Then we have the following relation which is the analogue of  (\ref{eq-1}):
\begin{equation}
\int_{U(n)}  \chi_\delta(\sigma^{-1}) R_\sigma \circ T_m \circ R_{\sigma^{-1}} f \, d\sigma =  T_{m_\delta}f . 
\end{equation}

We are now in a position to prove Theorem \ref{thm2} stated in the introduction. 

\begin{proof}[Proof of Theorem \ref{thm2}] 
In proving Theorem \ref{thm2} we closely follow the proof of Theorem \ref{Main-Weyl} presented in Section \ref{sec-Weyl-mult-proof}. As in the case of Weyl multipliers we prove the theorem when $ N =1.$  First of all we have analogue of Lemma \ref{Weyl-rot}: when $ m $ is a Fourier multiplier for $ \mathring{W}^{1,1}(\He^n) $ (resp. $ W^{1,1}(\He^n) $) then so is $ m_\delta $ for any $ \delta \in \widehat{K_0}.$ This is a consequence of the relation \eqref{eq-2-repeat}. We can repeat the proof of Lemma \ref{Weyl-rot} verbatim as $ R_\sigma $ acts only on the $z $-variable.  The condition (\ref{rep-heis}) allows us to conclude that $ G_\lambda(P)H(\lambda)^{-(a+b)/2},$ where 
$ P = \sum_{j=1}^{d(\delta)} c_{\delta,j} P^\delta_j $, defines a bounded multiplier for $ \mathring{W}^{1,1}(\He^n) $ (resp. $ W^{1,1}(\He^n) $). But in view of Theorem \ref{Riesz} we can conclude that $ m_\delta = 0 $ for any $ \delta $ other than the trivial representation. (We can make use of  an analogue of Lemma \ref{lem-solid-harmonic-equiv} to reduce matters to the particular Riesz transform with multiplier $ (A_k(\lambda)^\ast)^b A_j(\lambda)^a H(\lambda)^{-(a+b)/2}$). Then we can make use of Theorem \ref{Riesz} to conclude that $ m(\lambda) = c(\lambda) I $ and finally the condition \eqref{hom} on $ m(\lambda) $ allows us to complete the proof.
\end{proof}

\begin{remark} It would be interesting to see if Theorem \ref{thm2} is true under the sole assumption \eqref{hom} without the extra condition (\ref{rep-hei}) imposed. From the relation (\ref{eq-2}) it follows that $ T_{m_\delta} $ commutes with the dilations $ \delta_r $ whenever $ m $ does. This is a consequence of the fact that $ R_\sigma $ commutes with the dilations. Consequently, we see that 
$$  m_\delta(\lambda)  = d_{\sqrt{|\lambda|}} \circ  m_\delta\left( \lambda / |\lambda| \right) \circ d_{\sqrt{|\lambda|}}^{-1} .$$
Thus for $ \lambda >0, m_\delta(\lambda) $ is completely determined by $ m_\delta(1) $ which can be expanded in terms of $ S^\rho_{j,k}, \rho \in \widehat{K_0}, 1 \leq j \leq d(\rho), k \in \N.$ From the definition of $ m_\delta(1) $ it follows that $ (m_\delta(1)P_k, S^\rho_{j,k}) = 0 $ unless $ \rho = \delta.$ Thus we have
$$  m_\delta(1) P_k = \sum_{j=1}^{d(\delta)} ( m_\delta(1), G(P^\delta_j))_k  C_\delta(2k+n)^{-2} G(P^\delta_j) P_k.$$
If we further assume that $ m(1) $ and hence $ m_\delta(1) $ are homogeneous of degree zero, then the above leads to the representation
$$ m_\delta(1) = \sum_{j=1}^{d(\delta)} B^\delta_j  G(P^\delta_j) C_\delta(H)^{-1} $$
where we have used the same notation as in Section \ref{sec-Weyl-mult-proof}. For any $ \lambda \in \R^\ast $ the above gives us
$$m_\delta(\lambda) = \sum_{j=1}^{d(\delta)} B^\delta_j  G_\lambda(P^\delta_j) C_\delta(H(\lambda))^{-1} .$$
In the case of Weyl transform, we have shown that $ C_\delta(H) H^{-(a+b)/2} $ define $ L^1(\C^n) $ bounded Weyl  multipliers. If we can show that $ C_\delta(H(\lambda))H(\lambda)^{-(a+b)/2} $ define $ L^1(\He^n) $ bounded Fourier multipliers, then we can drop the condition (\ref{rep-hei}) in Theorem \ref{thm2}. 
\end{remark}


\section*{Acknowledgements}
The authors would like to express their gratitude to Sayan Bagchi and Saurabh Shrivastava for several discussions and suggestions throughout the development of this project. 

The first author is supported by a scholarship from CSIR, Government of India. The second author is supported by INSPIRE Faculty Fellowship from DST, Government of India. The third author is supported by J. C. Bose Fellowship from DST, Government of India.

\end{document}